\documentclass{siamart251216}

\usepackage{amssymb}
\usepackage{microtype}
\usepackage{orcidlink}        %
\usepackage[shortlabels]{enumitem}
\newlist{thmenum}{enumerate}{2}
\setlist[thmenum,1]{%
  label=(\roman*), ref=(\roman*),
  leftmargin=*, labelindent=0pt, labelsep=0.25em, align=left,
  topsep=0.5ex, itemsep=0.25ex, parsep=0pt}
\setlist[thmenum,2]{%
  label=(\alph*), ref=(\alph*),
  leftmargin=*, labelindent=0pt, labelsep=0.25em, align=left,
  topsep=0.25ex, itemsep=0.25ex, parsep=0pt}
\usepackage{silence}
\usepackage{subcaption}      %
\usepackage{algpseudocode}   %
\usepackage{mathtools}
\usepackage{esint}
\usepackage{braket}
\usepackage[most]{tcolorbox}
\tcbset{
  thmbox/.style={
    enhanced, breakable,
    colback=blue!5!white,
    colframe=black,
    boxrule=0.4pt,
    arc=1pt,
    left=3pt, right=3pt, top=3pt, bottom=3pt,
    boxsep=2pt,
    before skip=3pt plus 1pt minus 1pt, after skip=\topsep,
  }
}

\numberwithin{equation}{section}
\numberwithin{figure}{section}

\newif\ifdraftnotes\draftnotestrue
\ifdraftnotes
  \usepackage[dvipsnames]{xcolor}
  \newcommand{\nb}[1]{{\color{RedOrange} Nick: #1}}
  \newcommand{\mpf}[1]{{\color{blue} (Michael: #1)}}
  \newcommand{\jb}[1]{{\color{OliveGreen} (Jim: #1)}}
\else
  \newcommand{\nb}[1]{}\newcommand{\mpf}[1]{}\newcommand{\jb}[1]{}
\fi

\usepackage{aliascnt}
\makeatletter
\newcommand{\siam@aliasthm}[2]{%
  \theoremstyle{plain}%
  \theoremheaderfont{\normalfont\sc}%
  \theorembodyfont{\normalfont\itshape}%
  \theoremseparator{.}%
  \theoremsymbol{}%
  \newaliascnt{#1}{theorem}%
  \@ifundefined{#1}%
    {\newtheorem{#1}[#1]{#2}}%
    {\renewtheorem{#1}[#1]{#2}}%
  \aliascntresetthe{#1}}
\newcommand{\siam@aliasrem}[2]{%
  \theoremstyle{plain}%
  \theoremheaderfont{\normalfont\itshape}%
  \theorembodyfont{\normalfont}%
  \theoremseparator{.}%
  \theoremsymbol{}%
  \newaliascnt{#1}{theorem}%
  \@ifundefined{#1}%
    {\newtheorem{#1}[#1]{#2}}%
    {\renewtheorem{#1}[#1]{#2}}%
  \aliascntresetthe{#1}}
\renewcommand{\newsiamthm}[2]{\siam@aliasthm{#1}{#2}}
\renewcommand{\newsiamremark}[2]{\siam@aliasrem{#1}{#2}}
\siam@aliasthm{lemma}{Lemma}
\siam@aliasthm{corollary}{Corollary}
\siam@aliasthm{proposition}{Proposition}
\siam@aliasthm{definition}{Definition}
\makeatother
\newsiamremark{remark}{Remark}
\newsiamthm{example}{Example}
\tcolorboxenvironment{theorem}{thmbox}
\tcolorboxenvironment{lemma}{thmbox}
\tcolorboxenvironment{proposition}{thmbox}
\tcolorboxenvironment{corollary}{thmbox}
\tcolorboxenvironment{definition}{thmbox}
\tcolorboxenvironment{remark}{thmbox}
\crefname{lemma}{Lemma}{Lemmas}\Crefname{lemma}{Lemma}{Lemmas}
\crefname{corollary}{Corollary}{Corollaries}\Crefname{corollary}{Corollary}{Corollaries}
\crefname{proposition}{Proposition}{Propositions}\Crefname{proposition}{Proposition}{Propositions}
\crefname{definition}{Definition}{Definitions}\Crefname{definition}{Definition}{Definitions}
\crefname{remark}{Remark}{Remarks}\Crefname{remark}{Remark}{Remarks}
\crefname{example}{Example}{Examples}\Crefname{example}{Example}{Examples}

\usepackage[numbers,sort]{natbib}

\crefname{enumi}{}{}
\crefname{enumii}{}{}
\crefname{enumiii}{}{}
\crefname{thmenumi}{}{}
\crefname{thmenumii}{}{}
\crefname{ALG@line}{Step}{Steps}
\Crefname{ALG@line}{Step}{Steps}

\crefformat{equation}{\textup{#2(#1)#3}}
\crefrangeformat{equation}{\textup{#3(#1)#4--#5(#2)#6}}
\crefmultiformat{equation}{\textup{#2(#1)#3}}{ and \textup{#2(#1)#3}}{, \textup{#2(#1)#3}}{ and \textup{#2(#1)#3}}
\crefrangemultiformat{equation}{\textup{#3(#1)#4--#5(#2)#6}}{ and \textup{#3(#1)#4--#5(#2)#6}}{, \textup{#3(#1)#4--#5(#2)#6}}{ and \textup{#3(#1)#4--#5(#2)#6}}

\makeatletter
\renewcommand{\ALG@step}{%
  \addtocounter{ALG@line}{1}%
  \global\def\@currentcounter{ALG@line}%
  \addtocounter{ALG@rem}{1}%
  \ifthenelse{\equal{\arabic{ALG@rem}}{\ALG@numberfreq}}%
    {\setcounter{ALG@rem}{0}\alglinenumber{\arabic{ALG@line}}}%
    {}%
}%
\makeatother

\newcounter{step}

\newcommand{\proofpart}[1]{Part~\ref{#1}.}
\newcommand{\partref}[1]{Part~\ref{#1}}

\newcommand{\R}{\mathbb{R}}
\newcommand{\Nzero}{\mathbb{N}_0} %

\newcommand{\T}{^\intercal}
\newcommand{\ones}{\mathbf{1}}

\newcommand{\rr}{\mathbb{R}}

\def\bB{\mathbb{B}}

\newcommand{\cC}{\mathcal{C}}

\newcommand{\cL}{\mathcal{L}}

\def\cU{\mathcal{U}}

\DeclarePairedDelimiterX{\norm}[1]{\lVert}{\rVert}{#1}

\DeclareMathOperator{\rint}{rint}
\DeclareMathOperator{\vexp}{\mathbf{exp}}
\DeclareMathOperator{\vlog}{\mathbf{log}}
\DeclareMathOperator{\logexp}{logexp}
\newcommand{\softmax}{\nabla\!\logexp}

\DeclareMathOperator{\interior}{int}

\DeclareMathOperator*{\argmin}{argmin}
\DeclareMathOperator{\dom}{dom}

\newcommand{\ip}[1]{\langle#1\rangle}
\newcommand{\bip}[2]{\left\langle #1,\, #2\right\rangle}
\newcommand{\map}[3]{#1: #2\rightarrow #3}
\newcommand{\rset}[2]{\left\{#1\left\vert\, #2\right.\right\}}

\newcommand{\lev}[2]{\mathrm{lev}_{#1}(#2)}
\newcommand{\half}{\mbox{\small $\frac{1}{2}$}}

\newcommand{\alf}{\alpha}

\def\talf{{\tilde\alf}}
\newcommand{\eps}{\varepsilon}
\newcommand{\balf}{\bar\alpha}
\def\by{{\bar y}}
\def\lam{\lambda}

\newcommand{\gam}{\gamma}
\newcommand{\del}{\delta}
\def\hdel{{\hat\del}}
\newcommand{\Del}{\Delta}
\def\Deln{{\Del_n}}

\newcommand{\sig}{\sigma}
\newcommand{\extrr}{\overline{\rr}}
\newcommand{\bN}{\mathbb{N}}

\newcommand{\btau}{\bar\tau}

\newcommand{\hx}{{\hat x}}

\newcommand{\tsum}{\textstyle\sum}

\newcommand{\mathL}{\mathsf{L}}
\newcommand{\mathM}{\mathsf{M}}

\newcommand{\tL}{{ \tilde \mathL }}

\newcommand{\hLD}{{ {\hat\mathL}_{\!\mbox{\tiny D}} }}
\newcommand{\LD}{{{\mathL}_{\!\mbox{\tiny D}}}}
\newcommand{\hM}{{ \hat\mathM }}
\def\cN{\mathcal{N}}
\def\tH{{\widetilde H}}
\def\R{\mathbb{R}}
\def\Rn{\R^n}
\definecolor{darkgreen}{rgb}{0,0.4,0}

\def\taubetamin{{\tau_{\beta{\min}}}}
\def\htaubetamin{{\hat\tau_{\beta{\min}}}}
\def\taubetamax{{\tau_{\beta{\max}}}}

\newcommand{\ymax}{{\mathsf{y}_{\beta{\max}}}}

\newcommand{\Fymax}{{\widehat{\mathsf{F}}_{y\max}}}
\newcommand{\Ftaumax}{{\widehat{\mathsf{F}}_{{\tau}\max}}}
\def\dmax{{\mathsf{d}_{\max}}}
\def\tdmax{{\tilde{\mathsf{d}}_{\max}}}
\def\rhomax{{\rho_{\max}}}

\def\dlogexp{\softmax_q} %
 
\def\hbeta{{\hat\beta}}

\newcommand{\Diag}[1]{{\mathrm{Diag}\left(#1\right)}}
\newcommand{\onorm}[1]{\left\| #1\right\|_1}
\newcommand{\inorm}[1]{\left\| #1\right\|_\infty}
\def\hx{{\hat x}}

\def\bh{{\bar h}}
\def\hh{{\hat h}}
\def\hP{{\widehat P}}

\makeatletter
\providecommand*{\diff}%
{\@ifnextchar^{\DIfF}{\DIfF^{}}} \def\DIfF^#1{%
\mathop{\mathrm{\mathstrut d}}%
        \nolimits^{#1}\gobblespace}

\def\gobblespace{%
        \futurelet\diffarg\opspace}
\def\opspace{%
        \let\DiffSpace\!%
        \ifx\diffarg(%
            \let\DiffSpace\relax
        \else
            \ifx\diffarg[%
               \let\DiffSpace\relax
            \else
               \ifx\diffarg\{%
                   \let\DiffSpace\relax
               \fi\fi\fi\DiffSpace}

\makeatother

\headers{Scale-shape dual Newton for entropic least squares}%
        {N.\ Barnfield, J.\,V.\ Burke, M.\,P.\ Friedlander, and T.\ Hoheisel}

\title{A Scale-Shape Dual Newton Method for\\Entropic Least Squares\thanks{Date: April 29, 2026.}}

\author{%
  Nicholas Barnfield\orcid{0009-0003-9895-8347}\thanks{Department of Statistics, Harvard University,
    Cambridge, MA, USA (\email{nbarnfield@g.harvard.edu}).}
  \and James V.\ Burke\orcid{0000-0002-0215-7306}\thanks{Department of Mathematics, University of
    Washington, Seattle, WA, USA (\email{jvburke01@gmail.com}).}
  \and Michael P.\ Friedlander\orcid{0000-0003-0222-5222}\thanks{Departments of Computer Science and
    Mathematics, University of British Columbia, Vancouver, BC, Canada
    (\email{mpf@cs.ubc.ca}).}
  \and \mbox{Tim Hoheisel}\orcid{0000-0002-0782-6302}\thanks{Department of Mathematics and Statistics,
    McGill University, Montreal, QC, Canada (\email{tim.hoheisel@mcgill.ca}).}%
}

\begin{document}

\maketitle

\begin{abstract}
We give a damped inexact Newton method for entropy-regularized least-squares on the nonnegative orthant that converges globally at a linear rate with $\mathcal{O}(\log\epsilon^{-1})$ iteration complexity, locally at a superlinear-to-quadratic rate, and is immune to the finite-precision overflow that limits classical dual solvers. A scale-shape decomposition of the primal---separating its scale from its direction---produces a dual with a nonsingular Jacobian. Objectives and Jacobians are evaluated through stable log-sum-exp and softmax primitives. Lambert~W bounds on the scale uniformly control the Jacobian's spectrum, from which both rates follow. The solution map is jointly Lipschitz in the data, regularization parameter, and reference measure, and extends continuously to the vanishing-regularization limit. Experiments on a problem from analytic continuation of quantum Monte Carlo data confirm the predicted overflow resilience and convergence behavior.
\end{abstract}

\begin{keywords}
  entropy-regularized least squares, scale-shape decomposition,
  inexact Newton method, log-sum-exp, Lambert~W function,
  regularization path, analytic continuation
\end{keywords}

\begin{MSCcodes}
  90C25, 90C46, 90C31, 65K05, 65F22
\end{MSCcodes}

\section{Introduction}

Recovering a nonnegative density, spectrum, or intensity from noisy samples of a linear smoothing map is a recurring problem across applied mathematics. Instances include analytic continuation of quantum Monte Carlo data for warm dense matter~\cite{chunaDualFormulationMaximum2025,chunaEstimatesDynamicStructure2025,dornheim_ImaginarytimeCorrelationFunction_2023}, maximum-entropy image reconstruction in astronomy~\cite{Skilling_MaximumEntropyImage_1984,theeventhorizontelescopecollaborationFirstM87Event2019}, dual methods for entropy maximization in crystallography~\cite{decarreau_DualMethodsEntropy_1992}, and entropic optimal transport~\cite{cuturi_SemidualRegularizedOptimal_2018}. The common discrete form is a Fredholm integral equation of the first kind with a nonnegativity constraint, and is exponentially ill-posed in the sense that small perturbations of the data can cause large variations in the recovered solution~\citep{engl_RegularizationInverseProblems_1996}. Regularization is unavoidable. Among candidate regularizers, the Shore--Johnson axioms~\cite{shore_AxiomaticDerivationPrinciple_1980} and the Bayesian maximum-entropy tradition~\cite{jaynes_InformationTheoryStatistical_1957a} jointly single out the relative entropy as the unique inference procedure that avoids spurious correlations under missing information.

We study this archetype in the following finite-dimensional form. Given $A\in\rr^{m\times n}$, $b\in\rr^m$, $c\in\rr^n$, $q\in\rr^n_{++}$, and $\lambda>0$, consider the problem
\begin{equation}\label{eq:P_problem}
    \min_{x \in \rr^n}\left\{ \psi_p(x)
    \coloneqq \tfrac{1}{2\lambda}\,\norm{Ax-b}^2 + \ip{c,x} + g_q(x) \right\},
\end{equation}
where $q$ serves as a reference measure in the (unnormalized) relative entropy
\begin{equation} \label{eq:g_mu_definition}
    g_q(x) = \tsum_{j=1}^n x_j \log(x_j/q_j),
\end{equation}
with the conventions $0\cdot\log 0 = 0$ and $u\log u = \infty$ for $u<0$. Since $g_q$ is strictly convex, supercoercive, and lower semicontinuous~\cite{Bauschke2017}, \cref{eq:P_problem} has a unique minimizer~$x^\ast$; when $q$ has zeros, restricting to its support incurs no loss of generality.

The Fenchel dual of~\eqref{eq:P_problem} is unconstrained and smooth, and a long tradition of algorithmic work exploits this structure~\cite{ben-tal_RoleDualityOptimization_1988,borwein_ConvergenceBestEntropy_1991,borwein1992partially,decarreau_DualMethodsEntropy_1992,eriksson_NoteSolutionLarge_1980}. Any naive dual solver, however, runs into two independent obstacles: poor conditioning and arithmetic overflow. The dual Hessian is globally unbounded along sublevel sets, so first-order methods attain only $\mathcal{O}(\epsilon^{-1})$ iteration complexity~\citep{nesterov_LecturesConvexOptimization_2018} and standard Newton globalizations yield no global rate. The primal-from-dual recovery map $x = q\odot\vexp(A\T y - \ones - c)$, where $\vexp$ and $\odot$ denote the elementwise exponential and elementwise product, and $\ones$ the vector of ones, may overflow in finite-precision arithmetic long before the dual iterate~$y$ reaches the optimum.

Lifting the mass $\tau := \ip{\ones,x}$, determined by~$y$ in the classical dual, to an independent dual variable alongside~$y$ resolves both obstacles. The optimality conditions reduce to a nonlinear system $F(y,\tau)=0$ whose Jacobian $DF$ is everywhere nonsingular on $\rr^m\times\rr_{++}$. Lambert~W bounds on~$\tau$ along sublevel sets of the merit function $\rho := \norm{F}$ give uniform control of the extreme singular values of $DF$, hence a global $\mathcal{O}(\log\epsilon^{-1})$ iteration complexity for damped inexact Newton on~$\rho$ and a superlinear-to-quadratic local rate~(\cref{thm:convergence of inexact Newton 1}). Both $F$ and $DF$ are evaluated using the weighted log-sum-exp and its softmax gradient, which are stable under the standard max-shift.

\paragraph{Contributions} The scale-shape reformulation of~\eqref{eq:P_problem} yields a primal--dual characterization with a nonsingular Jacobian (\cref{sec:new_approach}). A damped inexact Newton iteration on the merit function $\rho = \norm{F}$ is proposed  (\cref{sec:alg2}). The convergence analysis (\cref{sec:bounds,sec:convergence}) culminates in \cref{thm:convergence of inexact Newton 1}. Joint Lipschitz perturbation bounds in $(b,\lambda,q)$ and the $\lambda\downarrow 0$ regularization-path limit follow in \cref{sec:perturbation}. Numerical experiments on a synthetic uniform-electron-gas problem (\cref{sec:experiments}) demonstrate overflow resilience, scale recovery from a fixed $\tau_0$, and the regularization-path behavior predicted by \cref{thm:joint_perturbation,thm:lam convergence}. Reference Python code reproducing all figures and convergence numbers is available at \url{https://doi.org/10.5281/zenodo.19889724}.

\subsection{Classical dual formulation} \label{sec:dual_prior}

The entropy-regularized least-squares problem~\eqref{eq:P_problem} admits an unconstrained dual formulation with the objective
\begin{equation} \label{eq:original_dual_obj}
        \psi_d(y) \coloneq \ip{b,y} - \tfrac{\lambda}{2}\norm{y}^2 - \ip{q, \vexp(A\T y - \ones - c)}.
\end{equation}
The next result specializes the infinite-dimensional theory of Decarreau~\cite{decarreau_DualMethodsEntropy_1992} to our finite-dimensional setting. The proof uses standard Fenchel--Rockafellar duality.

\begin{proposition}[Strong duality for entropy-regularized least squares] \label{thm:original_primal_dual}
    Assume $\lambda>0$ and $q\in\rr^n_{++}$. Then, strong duality holds with attainment:
    \begin{equation}\label{eq:original_strong_dual}
        \min_{x \in \rr^n_+} \psi_p(x) \;=\; \max_{y \in \rr^m}  \psi_d(y).
    \end{equation}
    Moreover, the optimal primal-dual pair $(x^\ast, y^\ast)$ is unique and satisfies
    \begin{equation} \label{eq:Primal-Dual-Opt}
        Ax^\ast + \lambda y^\ast = b
        \quad\text{and}\quad
        x^\ast = q \odot \vexp(A\T y^\ast - \ones - c).
    \end{equation}
\end{proposition}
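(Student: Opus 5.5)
We will prove the statement by Fenchel--Rockafellar duality, writing the primal as $f(Ax)+h(x)$ with
\[
f(z)\coloneqq\tfrac{1}{2\lambda}\norm{z-b}^2,\qquad h(x)\coloneqq\ip{c,x}+g_q(x).
\]
Both functions are proper, closed and convex, and $f$ is finite everywhere. This gives the constraint qualification $A(\dom h)\cap\interior(\dom f)\neq\emptyset$ for free. Consequently
\[
\inf_x\{f(Ax)+h(x)\}=\max_y\{-f^*(-y)-h^*(A\T y)\},
\]
and the dual maximum is attained.

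\textbf{Conjugates.} We compute the two conjugates explicitly.
\begin{itemize}
\item For $f$: $f^*(w)=\ip{b,w}+\tfrac{\lambda}{2}\norm{w}^2$, so $-f^*(-y)=\ip{b,y}-\tfrac{\lambda}{2}\norm{y}^2$.
\item For $h$: the conjugate separates coordinatewise. For each $j$ we maximize $s x_j - c_jx_j - x_j\log(x_j/q_j)$ over $x_j\ge0$. The stationarity condition $\log(x_j/q_j)=s-c_j-1$ gives
\[
h^*(s)=\ip{q,\vexp(s-\ones-c)}.
\]
\end{itemize}
Substituting $s=A\T y$ reproduces $\psi_d$ in \cref{eq:original_dual_obj}, so \cref{eq:original_strong_dual} holds with dual attainment.

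\textbf{Primal attainment and uniqueness.} Primal attainment and uniqueness follow from strict convexity, supercoercivity and lower semicontinuity of $g_q$, as noted after \cref{eq:g_mu_definition}. The minimum over $\rr^n$ equals the minimum over $\rr^n_+$ because $\psi_p=\infty$ off $\rr^n_+$.

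Dual uniqueness follows because $\psi_d$ is strictly concave: its term $-\tfrac{\lambda}{2}\norm{y}^2$ is strictly concave with $\lambda>0$, and the remaining terms are concave.

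\textbf{Optimality conditions.} By Fenchel--Rockafellar, the optimal pair satisfies the extremality relations
\[
-y^\ast\in\partial f(Ax^\ast),\qquad A\T y^\ast\in\partial h(x^\ast).
\]
\begin{itemize}
\item The first relation reads $-y^\ast=(Ax^\ast-b)/\lambda$, which is $Ax^\ast+\lambda y^\ast=b$.
\item For the second, invert via $x^\ast=\nabla h^*(A\T y^\ast)$, since $h^*$ is differentiable. This gives $x^\ast=q\odot\vexp(A\T y^\ast-\ones-c)$.
\end{itemize}

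As a cross-check, one can instead set $\nabla\psi_d(y^\ast)=0$. This gives $b-\lambda y^\ast-A\,(q\odot\vexp(A\T y^\ast-\ones-c))=0$, and defining $x^\ast$ by the exponential formula yields both identities in \cref{eq:Primal-Dual-Opt}. Plugging these back in shows $\psi_p(x^\ast)=\psi_d(y^\ast)$.

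\textbf{Main obstacle.} The step needing the most care is the boundary behaviour of $h$ at $x_j=0$, which enters the conjugate computation and the subdifferential inversion. The way around it is that the maximizer in the conjugate is strictly positive, so the subdifferential is only needed at interior points. We use $\partial h(x)=\emptyset$ whenever some $x_j=0$, which holds because $\log\to-\infty$ there. This forces $x^\ast>0$ and legitimizes the gradient formula.
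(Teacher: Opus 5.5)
Your proposal is correct and follows essentially the same route as the paper: Fenchel--Rockafellar duality with the same splitting into the quadratic data-fit term and $\ip{c,\cdot}+g_q$, the same two conjugates, and the same qualification condition, which holds automatically because the quadratic is finite everywhere. You are more complete than the paper in three places: primal attainment and uniqueness, strict concavity of $\psi_d$ for dual uniqueness, and the extremality relations that identify $x^\ast$ with $q\odot\vexp(A\T y^\ast-\ones-c)$; the paper says only that the optimality conditions follow from differentiating the dual.
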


\begin{proof}
Setting $\nabla[\ip{w,y} - g_q(w)] = 0$ yields the unique maximizer $w_i = q_i \exp(y_i - 1)$, whence $g_q^\ast(y) = \ip{q, \vexp(y - \ones)}$. The conjugate of $\tfrac{1}{2\lambda}\|(\cdot) - b\|^2$ is $\tfrac{\lambda}{2}\norm{\cdot}^2 + \ip{b,\cdot}$. Applying Fenchel--Rockafellar duality \cite[Cor.\ 31.2.1]{rockafellar-1970}, where qualification holds since $\dom(g_q + \ip{c,\cdot}) = \rr^n_+ \neq \emptyset$ and $\dom(\tfrac{1}{2\lambda}\|\cdot - b\|^2) = \rr^m$, yields the dual objective $\psi_d$ in~\eqref{eq:original_dual_obj} and strong duality~\eqref{eq:original_strong_dual}. The optimality conditions~\eqref{eq:Primal-Dual-Opt} follow from differentiating the dual.
\end{proof}

\Cref{thm:original_primal_dual} makes the unconstrained dual a convenient route to the primal solution: any gradient-based method applied to $\psi_d$ recovers $x^\ast$ at no additional cost via~\eqref{eq:Primal-Dual-Opt}, and the regularization parameter controls the residual directly through $\norm{Ax^\ast-b}=\lambda\norm{y^\ast}$. The obstacles described above persist, however: the unnormalized vector $q\odot\vexp(A\T y - \ones - c)$ necessarily enters $\nabla\psi_d$ and $\nabla^2\psi_d$, but admits no max-shift stabilization. Avoiding overflow requires lifting the scale, as developed in \cref{subsec:self_scaling_overview}, rather than rewriting $\psi_d$ in the log domain. 

\subsection{Scale-shape decomposition and the inexact Newton method}\label{subsec:self_scaling_overview}

The common source of the two obstacles is that, in the classical dual, the scale $\ip{\ones,x}$ and the direction $x/\ip{\ones,x}$ are both determined by the single variable~$y$ through the exponential map $x = q\odot\vexp(A\T y - \ones - c)$. Separating these is the purpose of the scale-shape decomposition.

Every $x\in\rr^n_+$ admits the \emph{scale-shape decomposition} $x = \tau p$, where $\tau = \ip{\ones,x}\ge 0$ and $p = x/\tau$ lies in the probability simplex $\Delta_n := \set{x\in\rr^n_+ | \ip{\ones,x}=1}$; this partitions the nonnegative orthant $\rr^n_+ = \bigcup_{\tau\ge 0}\tau\Delta_n$ into scaled simplices, that is, $\Del_n$ is a compact base for the convex cone $\R^n_+$~\cite{NonLin_COnvOpt}. The corresponding scale-shape dual objective is
\begin{equation*}\label{eq:dual_overview}
  \phi_d(y,\tau) = \ip{b,y} - \tfrac{\lambda}{2}\norm{y}^2 - \tau\logexp_q(A\T y - c) + \tau\log\tau,
\end{equation*}
where $\logexp_q(u):=\log\ip{q,\vexp(u)}$ is the weighted log-sum-exp and its gradient is the weighted softmax $\dlogexp(u):=q\odot\vexp(u)/\ip{q,\vexp(u)}\in\Delta_n$. Setting $\nabla_{(y,\tau)}\phi_d = 0$ yields the nonlinear system $F(y,\tau)=0$ whose Jacobian is everywhere nonsingular on $\rr^m\times\rr_{++}$. On each sublevel set of $\rho=\norm{F}$ the scale~$\tau$ is bracketed by computable Lambert~W bounds (\cref{lem:level cpt}), which translate into uniform bounds on the extreme singular values of~$DF$ (\cref{lem:DF inv bd}), furnishing the global and local convergence rates of \cref{thm:convergence of inexact Newton 1}. When $\tau$ is known, the algorithm reduces to inexact Newton in~$y$ alone (see \cref{rem:fixed_scale}).

\subsection{Related work}\label{sec:related_work}

Prior work on the classical dual~\eqref{eq:original_dual_obj} can be organized by how the scale parameter is handled. \citet{eriksson_NoteSolutionLarge_1980} applies Newton's method directly to $\psi_d$ but establishes neither convergence nor stability; the scale-shape system $F(y,\tau)=0$ instead admits the global linear rate and $\mathcal{O}(\log\epsilon^{-1})$ iteration bound proved in \cref{sec:convergence}. In the \emph{known-scale} setting of entropy-regularized optimal transport, \citet{cuturi_SemidualRegularizedOptimal_2018} controls exponential growth by solving a log-domain dual with L-BFGS \citep{liu_LimitedMemoryBFGS_1989}. The present method instead treats the \emph{unknown} scale $\tau$ as an independent dual variable.

For unknown scale, \citet{Skilling_MaximumEntropyImage_1984}, and \citet{bryan_MaximumEntropyAnalysis_1990} propose Newton methods that require a well-conditioned surrogate for the forward operator. The Jacobian analysis of \cref{sec:alg2} removes this requirement; \citet{chunaDualFormulationMaximum2025} gives a detailed comparison in the analytic-continuation setting. Adaptive gradient methods \citep{malitsky_AdaptiveGradientDescent_2020,lu_AcceleratedFirstorderMethods_2023} exploit local smoothness but produce rates governed by trajectory-dependent constants unavailable \emph{a priori}.

\section{Dual under scale-shape separation} \label{sec:new_approach}

We now formalize the scale-shape decomposition outlined in \cref{subsec:self_scaling_overview}. Writing $x = \tau p$ with $\tau = \ip{\ones,x}$ and $p \in \Delta_n$, the reformulated problem becomes
\begin{equation}\label{eq:primal_problem_scale_shape}
  \min_{x\in\rr^n_+} \psi_p(x) = \min_{\tau\ge0} \min_{x \in \tau\Delta_n} \phi_p(x,\tau),
\end{equation}
where the primal objective is
\begin{equation}\label{eq:phi primal}
  \phi_p(x,\tau) \coloneq \tfrac{1}{2\lambda}\|Ax -b\|^2 + \ip{c,x} + (g_q + \delta_{\tau \Delta_n})(x).
\end{equation}
Since $g_q$ is strictly convex and coercive, so is $\phi_p(\cdot,\tau)$, and the inner primal problem $\min_{x\in\tau\Deln}\phi_p(x,\tau)$ is attained uniquely for every $\tau\ge 0$.

Dualizing~\eqref{eq:primal_problem_scale_shape} yields a saddle-point formulation. The corresponding first-order optimality conditions drive the Newton method of \cref{sec:alg2}.

\begin{proposition}[Scale-shape dual characterization] \label{thm:primal_dual_duality}
Let $\lambda > 0$ and $q \in \rr^n_{++}$. Define the dual objective
\begin{equation} \label{eq:dual_func}
   \phi_d(y, \tau) \coloneq
   \ip{b,y} - \tfrac{\lambda}{2}\norm{y}^2 - \tau \logexp_q( A\T y - c ) + \tau \log \tau,
\end{equation}
for $\tau\ge 0$, with the convention $0\log 0 = 0$ (see \eqref{eq:g_mu_definition}). The following hold:
\begin{thmenum}
\item \label{item:duality}
For every $\tau\ge 0$, strong duality with attainment holds:
\[
\min_{x\in\tau\Delta_n}\phi_p(x,\tau) = \max_{y\in\rr^m}\phi_d(y,\tau).
\]
Consequently, the primal problem admits the saddle-point representation
\begin{equation}\label{eq:minmax}
  \min_{x \in \rr^n_+} \psi_p(x) = \min_{\tau\ge0}\max_{y \in \rr^m} \phi_d(y,\tau).
\end{equation}

\item \label{item:strong_concavity}
For every $\tau\ge 0$, the map $y \mapsto \phi_d(y, \tau)$ is $(-\lambda)$-strongly concave. In particular, the maximizer in~$y$ in \partref{item:duality} is unique.

\item \label{item:solutions}
Let $x^\ast = \argmin_{x \in \rr^n_+}  \psi_p(x)$ and define
$\tau^\ast := \ip{\ones, x^\ast}$. 
Then $\tau^\ast>0$, and the characterizations
\begin{subequations}\label{eq:PrimalDualMap_simplex}
\begin{align}
    x^\ast &= \tau^\ast p(y^\ast) \in \rr^n_{++},  \label{eq:PrimalDualMap_simplex_x}\\
    y^\ast &= \tfrac{1}{\lambda}(b - Ax^\ast),\\
    \tau^\ast &= \ip{q, \vexp(A\T y^\ast - \ones - c)}, \label{eq:PrimalDualMap_simplex_tau}
\end{align}
\end{subequations}
hold, and $(y^\ast,\tau^\ast)$ is the unique minimax point of~\eqref{eq:minmax}.

\item \label{item:uniqueness}
For $\tau>0$, define $\map{F}{\rr^m\times\rr_{++}}{\rr^m\times\rr}$ by
\begin{equation} \label{eq:dual_grad}
    F(y, \tau) := \nabla_{y,\tau}\phi_d(y,\tau) =
    \begin{bmatrix}
     F_y(y,\tau)  \\[3pt]
     F_\tau(y,\tau)
   \end{bmatrix}
   :=
    \begin{bmatrix}
     b - \lambda y - \tau A p(y)  \\[3pt]
    -\logexp_q(A\T y - c) + \log \tau + 1
   \end{bmatrix},
\end{equation}
where $p(y):=\dlogexp(A\T y-c)$. Then $(y^\ast, \tau^\ast)$ is the unique zero of~$F$.

\item \label{item:DF-invertible}
The Jacobian $DF(y,\tau)$ is nonsingular for all $(y, \tau) \in \rr^m \times \rr_{++}$.
\end{thmenum}
\end{proposition}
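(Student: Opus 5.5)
Part (i) will be obtained by Fenchel--Rockafellar duality exactly as in \cref{thm:original_primal_dual}, with $g_q$ replaced by $g_q+\delta_{\tau\Delta_n}$. For $\tau>0$, the conjugate of $h_\tau:=g_q+\ip{c,\cdot}+\delta_{\tau\Delta_n}$ is computed by Lagrange multipliers on $\ip{\ones,w}=\tau$. The maximizer of $\ip{u,w}-\ip{c,w}-g_q(w)$ over $\tau\Delta_n$ is $w=\tau\,\dlogexp(u-c)$. This gives
\[
h_\tau^\ast(u)=\tau\logexp_q(u-c)-\tau\log\tau,
\]
and one checks it by substituting $w$ and using $\sum_j w_j=\tau$. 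For $\tau=0$ we have $\tau\Delta_n=\{0\}$, so $h_0^\ast\equiv 0$, which matches the convention $0\log 0=0$. Qualification holds because the quadratic term has full domain $\rr^m$. Composing with $A\T$ gives $\max_y \phi_d(y,\tau)$, with attainment. Then \eqref{eq:minmax} follows from \eqref{eq:primal_problem_scale_shape}.

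For part (ii), $\tau\logexp_q(A\T y-c)$ is convex in $y$ for $\tau\ge0$ because $\logexp_q$ is convex. Since $-\tfrac{\lambda}{2}\norm{y}^2$ is $\lambda$-strongly concave, $\phi_d(\cdot,\tau)$ is strongly concave with modulus $\lambda$, and its maximizer is unique.

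For part (iii), $\tau^\ast>0$ because $x=0$ is not optimal. The directional derivative of $g_q$ at $0$ in any positive direction is $-\infty$, so a small step $x=t\ones$ decreases $\psi_p$. By \cref{thm:original_primal_dual}, $x^\ast=q\odot\vexp(A\T y^\ast-\ones-c)\in\rr^n_{++}$ with $y^\ast=(b-Ax^\ast)/\lambda$, and $y^\ast$ is the same dual vector. Summing the coordinates gives \eqref{eq:PrimalDualMap_simplex_tau}. Normalizing gives $x^\ast/\tau^\ast=\dlogexp(A\T y^\ast-c)=p(y^\ast)$, so the factors $e^{-1}$ cancel.

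Parts (iii) and (iv) come together through $F$. A direct computation gives $\nabla_y\phi_d=b-\lambda y-\tau A p(y)$ and $\partial_\tau\phi_d=-\logexp_q(A\T y-c)+\log\tau+1$, so $F=0$ exactly when the following hold:
\begin{itemize}
\item[(a)] $\tau=\exp(\logexp_q(A\T y-c)-1)=\ip{q,\vexp(A\T y-\ones-c)}$;
\item[(b)] $b-\lambda y=\tau p(y)$ mapped through $A$, that is, $b-\lambda y=A\bigl(q\odot\vexp(A\T y-\ones-c)\bigr)$.
\end{itemize}
This is precisely \eqref{eq:Primal-Dual-Opt}, which has the unique solution $y^\ast$. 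Condition (a) then fixes $\tau^\ast$, so $F$ has the unique zero $(y^\ast,\tau^\ast)$. For the minimax claim, let $V(\tau)=\max_y\phi_d(y,\tau)$, which equals the inner primal value. By the envelope theorem, $V'(\tau)=\partial_\tau\phi_d(y(\tau),\tau)$. The function $V$ is convex, since the inner primal value is jointly convex in $(x,\tau)$ and partial minimization preserves convexity. Hence stationarity in $\tau$ together with (ii) identifies $(y^\ast,\tau^\ast)$ as the unique minimax point. Uniqueness of the minimizing $\tau$ follows because any minimizer must satisfy $F=0$.

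Part (v) is the step I expect to need the most care. Write $P=\Diag{p}-pp\T$ for the softmax Hessian, which is positive semidefinite. Then
\[
DF=\begin{bmatrix}-\lambda I-\tau APA\T & -Ap\\ -(Ap)\T & 1/\tau\end{bmatrix}.
\]
I would take the Schur complement with respect to the $(2,2)$ entry $1/\tau>0$. It equals
\[
-\lambda I-\tau APA\T-\tau\,Ap\,p\T A\T=-\lambda I-\tau A\,\Diag{p}\,A\T,
\]
which is negative definite because $\lambda>0$. Since the $(2,2)$ entry is nonzero and the Schur complement is nonsingular, $\det DF\neq0$. The main obstacle is really only getting the cancellation $P+pp\T=\Diag{p}$ right and checking the sign conventions. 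This same identity also gives the later spectral bounds.
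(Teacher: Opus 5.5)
Your argument is correct. Parts (i), (ii), (iv) and the identities in (iii) match the paper's proof. You derive the simplex conjugate by Lagrange multipliers rather than citing it, and you check $\tau=0$ separately. You take genuinely different routes in (v) and in the minimax claim of (iii).

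\textbf{Part (v).} The paper takes the Schur complement with respect to the $m\times m$ block. It uses $H=-\lambda I-\tau ASA^\top\preceq-\lambda I$ to get $1/\tau-\ip{g,H^{-1}g}\ge 1/\tau>0$, which needs only $S\succeq 0$. The same computation later gives, via inertia additivity and interlacing, the bound $\sigma_{\min}(DF)\ge\min\{\lambda,1/\tau\}$ in \cref{lem:DF inv bd}. You take it with respect to the scalar $1/\tau$ instead. That costs you the identity $S+pp^\top=\Diag{p}$, but your complement is $-\tH(z)$. This hands you directly the block factorization of $DF(z)^{-1}$, which the paper only states afterwards in \cref{lem:DF inv bd}\ref{item:DF inverse}.

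\textbf{Minimax claim.} The paper's argument is more elementary. Since $x^\ast\in\tau^\ast\Delta_n$, strong duality gives $\max_y\phi_d(y,\tau^\ast)=\psi_p(x^\ast)=\min\psi_p$. Any other outer minimizer $\tau'$ would yield an inner minimizer that globally minimizes $\psi_p$, hence equals $x^\ast$, forcing $\tau'=\ip{\ones,x^\ast}$. Your value-function argument is heavier but shows more: $V$ is convex and differentiable on $(0,\infty)$ with $V'(\tau)=F_\tau(y(\tau),\tau)$. This explains why $F_\tau=0$ is the right outer optimality condition.

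Two details should be written out:
\begin{itemize}
\item Differentiability of $\tau\mapsto y(\tau)$ needs justification. It follows from the implicit function theorem because $\nabla^2_{yy}\phi_d\preceq-\lambda I$.
\item You must exclude $\tau'=0$ as an outer minimizer, since $F$ is undefined there. One way: $V(0)=\psi_p(0)>\psi_p(x^\ast)$ by your own directional-derivative argument. Another: the minimizer set of the convex $V$ is an interval, so it would contain a positive $\tau\neq\tau^\ast$.
\end{itemize}
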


\begin{proof}

\noindent
\proofpart{item:duality}
Fix $\tau\ge 0$ and define $k_\tau:=g_q+\delta_{\tau\Delta_n}$. The inner problem in~\eqref{eq:primal_problem_scale_shape} can be written as
\[
\min_{x\in\rr^n}\big\{h(b-Ax) + \ip{c,x} + k_\tau(x)\big\},
\qquad
h:=\tfrac{1}{2\lambda}\|\cdot\|^2.
\]
Since $h$ is continuous on $\rr^m$ and $\dom k_\tau=\tau\Delta_n\neq\emptyset$, Fenchel--Rockafellar duality \citep[Example~11.41]{RockWets98} gives strong duality with attainment and the dual objective
\[
\max_{y\in\rr^m}\Big\{\ip{b,y}-h^\ast(y)-(k_\tau+\ip{c,\cdot})^\ast(A\T y)\Big\}.
\]
Here $h^\ast=\tfrac{\lambda}{2}\|\cdot\|^2$, and the substitution $w=\tau p$ in the definition of $k_\tau^\ast$ gives
\[
k_\tau^\ast(w) = \tau (g_q+\delta_{\Delta_n})^\ast(w) - \tau\log\tau = \tau\logexp_q(w) - \tau\log\tau,
\]
using the standard simplex conjugate $(g_q+\delta_{\Delta_n})^\ast=\logexp_q$ \cite[\S4.4.10]{firstorderBeck}. Since $(k_\tau+\ip{c,\cdot})^\ast(w)=k_\tau^\ast(w-c)$, the dual is $\max_y\phi_d(y,\tau)$; combined with~\eqref{eq:primal_problem_scale_shape} this yields~\eqref{eq:minmax}.

\medskip\noindent
\proofpart{item:strong_concavity}
The Hessian of $y\mapsto\phi_d(y,\tau)$ is $\nabla_{yy}^2\phi_d(y,\tau) = -\lambda I - \tau\,A\,S(y)\,A\T$, where
\begin{equation}\label{eq:S(y)}
S(y) := \Diag{p(y)} - p(y)p(y)\T = \nabla^2\logexp_q(A\T y - c) \succeq 0
\end{equation}
by \cref{lem:logexp_properties}\cref{item:logexp_smooth}. Therefore $\nabla_{yy}^2\phi_d(y,\tau)\preceq -\lambda I$ for all $\tau\ge 0$, proving $(-\lambda)$-strong concavity of $y\mapsto\phi_d(y,\tau)$ and uniqueness of the maximizer in~$y$ for each fixed~$\tau$.

\medskip\noindent
\proofpart{item:solutions}
Proposition~\ref{thm:original_primal_dual} gives the unique $(x^\ast,y^\ast)$ satisfying
\[
Ax^\ast+\lambda y^\ast=b,
\qquad
x^\ast=q\odot\vexp(A\T y^\ast-\ones-c),
\]
and hence $x^\ast\in\rr^n_{++}$. Therefore $\tau^\ast=\ip{\ones, x^\ast}>0$. Setting $p^\ast:=x^\ast/\tau^\ast$ gives $p^\ast\in\rint\Delta_n$ and
\[
p^\ast
=\frac{q\odot\vexp(A\T y^\ast-c)}{\ip{q,\vexp(A\T y^\ast-c)}}
=\dlogexp(A\T y^\ast-c),
\qquad
\tau^\ast=\ip{q,\vexp(A\T y^\ast-\ones-c)},
\]
which yields~\eqref{eq:PrimalDualMap_simplex}.
Since $x^\ast\in\tau^\ast\Delta_n$, strong duality at $\tau=\tau^\ast$ gives
\[
\min_{x\in\tau^\ast\Delta_n}\phi_p(x,\tau^\ast)=\max_{y\in\rr^m}\phi_d(y,\tau^\ast)=\psi_p(x^\ast).
\]
A direct computation gives $\nabla_y\phi_d(y,\tau) = b - \lambda y - \tau A p(y)$.
Substituting $y^\ast = \tfrac{1}{\lambda}(b - Ax^\ast)$ and $x^\ast = \tau^\ast p^\ast$ yields
$\nabla_y\phi_d(y^\ast,\tau^\ast) = 0$, so by \partref{item:strong_concavity}, $y^\ast$ is the unique maximizer at~$\tau^\ast$, and hence $(y^\ast,\tau^\ast)$ attains~\eqref{eq:minmax}. Uniqueness of the minimax point follows because $x^\ast$ is unique and $\tau^\ast=\ip{\ones, x^\ast}$ is determined by $x^\ast$, while $y^\ast$ is the unique maximizer of $\phi_d(\cdot,\tau^\ast)$ by \partref{item:strong_concavity}.

\medskip\noindent
\proofpart{item:uniqueness}
The identities in~\eqref{eq:PrimalDualMap_simplex} imply $F(y^\ast,\tau^\ast)=0$. Conversely, if $F(\hat y,\hat\tau)=0$ with $\hat\tau>0$, set $\hat x:=\hat\tau\,p(\hat y)$. Then $b=\lambda\hat y+A\hat x$, and the scalar equation gives $\hat\tau=e^{-1}\ip{q,\vexp(A\T\hat y-c)}$, whence $\hat x=q\odot\vexp(A\T\hat y-\ones-c)$. Hence $(\hat x,\hat y)$ satisfies~\eqref{eq:Primal-Dual-Opt}, so \cref{thm:original_primal_dual} gives $(\hat x,\hat y,\hat\tau)=(x^\ast,y^\ast,\tau^\ast)$.

\medskip\noindent
\proofpart{item:DF-invertible}
Set $H := -\lambda I - \tau A S(y)A\T \prec 0$ and $g := -A p(y)$. The Jacobian
\begin{equation}  \label{eq:dual_hess}
    DF(y,\tau)=\nabla^2 \phi_d(y, \tau) =
    \begin{bmatrix}
      H & g \\[3pt]
      g\T &  1/\tau
     \end{bmatrix}.
\end{equation}
By the Schur complement formula,
  $\det (DF) = \det (H)\cdot  \left( \frac{1}{\tau} - \ip{g,H^{-1} g} \right)$,
and therefore $DF$ is nonsingular because $H^{-1}\prec 0$ implies $\ip{g,H^{-1} g} \le 0 < 1/\tau$.
\end{proof}

The following properties of $\logexp_q$ are used in the proof above and throughout the paper.

\begin{lemma}[Properties of $\logexp_q$]\label{lem:logexp_properties}
Let $q \in \rr^n_{++}$. Then,
\begin{thmenum}
\item \label{item:logexp_smooth}
$\logexp_q \in \cC^\infty(\rr^n)$ is convex with Hessian
\begin{equation*} \label{eq:hessian-logexp}
    \nabla^2\logexp_q(u) = \Diag{\dlogexp(u)} - \dlogexp(u)\dlogexp(u)\T \succeq 0
\end{equation*}
satisfying the norm bound $\norm{\nabla^2\logexp_q(u)} \le 1/2$. In particular, $\dlogexp$ is $(1/2)$-Lipschitz.
\item \label{item:logexp_simplex}
$\dlogexp$ maps $\rr^n$ into the relative interior of $\Delta_n$.
\item \label{item:logexp_composed}
For $A \in \rr^{m \times n}$ and $c \in \rr^n$, the map $y \mapsto\logexp_q(A\T y - c)$
is $A_{\max}$-Lipschitz, and $p(y)=\dlogexp(A\T y - c)$ is $(\half\norm{A})$-Lipschitz.
\end{thmenum}
\end{lemma}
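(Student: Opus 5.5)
Three facts are needed: smoothness and convexity of $\logexp_q$ with the stated Hessian, the bound $\norm{\nabla^2\logexp_q(u)}\le 1/2$, and the Lipschitz properties of the compositions with $u=A\T y-c$. The plan is to compute derivatives directly, bound the Hessian by a variance argument, and then use the mean value theorem together with the chain rule.

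For \cref{item:logexp_smooth}, write $Z(u)=\ip{q,\vexp(u)}>0$. This is a positive $\cC^\infty$ function, so $\logexp_q=\log Z$ is $\cC^\infty$. Differentiating once gives $\nabla\logexp_q(u)=q\odot\vexp(u)/Z(u)=\dlogexp(u)=:\pi$. Differentiating again with the quotient rule gives $\Diag{\pi}-\pi\pi\T$.

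Positive semidefiniteness follows from viewing $\pi$ as a probability vector. For any $v\in\rr^n$,
$v\T(\Diag{\pi}-\pi\pi\T)v=\sum_j\pi_j v_j^2-(\sum_j\pi_jv_j)^2$, which is the variance of $v$ under $\pi$ and hence nonnegative. This also gives convexity.

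For the norm bound, the Hessian is symmetric positive semidefinite, so its norm equals $\max_{\norm{v}=1}\mathrm{Var}_\pi(v)$. Take $v_{\max}=\max_j v_j$ and $v_{\min}=\min_j v_j$. The Popoviciu bound gives $\mathrm{Var}_\pi(v)\le\frac14(v_{\max}-v_{\min})^2$. Under $\norm{v}=1$ we have $(v_{\max}-v_{\min})^2\le 2(v_{\max}^2+v_{\min}^2)\le 2$, so the variance is at most $1/2$. One subtlety: the extremes may occur at the same index. In that case all $v_j$ are equal, the variance is $0$, and the bound holds trivially. The Lipschitz constant $1/2$ for $\dlogexp$ then follows from the mean value inequality $\norm{\dlogexp(u)-\dlogexp(w)}\le\sup\norm{\nabla^2\logexp_q}\,\norm{u-w}$.

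For \cref{item:logexp_simplex}, every coordinate $q_j e^{u_j}/Z(u)$ is strictly positive and the coordinates sum to $1$. Hence $\dlogexp(u)\in\rint\Delta_n$.

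For \cref{item:logexp_composed}, the chain rule gives $\nabla_y\logexp_q(A\T y-c)=Ap(y)$. Since $p(y)\in\Delta_n$, $Ap(y)$ is a convex combination of the columns $a_j$ of $A$, so $\norm{Ap(y)}\le\max_j\norm{a_j}$. I read $A_{\max}$ as this maximal column norm; that reading is an assumption, since the quantity is not defined before this point. The mean value theorem then gives the first Lipschitz claim. The second claim is a composition: $\norm{p(y)-p(y')}\le\tfrac12\norm{A\T(y-y')}\le\tfrac12\norm{A}\norm{y-y'}$.

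The main obstacle is the sharp constant $1/2$ in the Hessian norm bound. The cruder Gershgorin estimate only yields $\norm{\nabla^2\logexp_q}\le 1$, so the variance argument is where the care is needed.
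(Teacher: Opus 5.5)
Your proof is correct. Parts (ii) and (iii), and in part (i) the smoothness, the Hessian formula and positive semidefiniteness, match the paper essentially line for line. The paper calls your variance identity an instance of Cauchy--Schwarz, and your reading of $A_{\max}$ as the largest column norm agrees with the definition the paper gives later.

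The one genuine difference is the bound $\norm{\nabla^2\logexp_q(u)}\le 1/2$. You use Popoviciu's variance inequality together with $(v_{\max}-v_{\min})^2\le 2(v_{\max}^2+v_{\min}^2)\le 2$ on the unit sphere, and that argument is valid. The paper instead uses exactly the Gershgorin argument you dismiss, so your closing remark is mistaken. For $S=\Diag{\pi}-\pi\pi\T$, the diagonal entry is $S_{ii}=\pi_i(1-\pi_i)$ and the off-diagonal absolute row sum is $\sum_{j\ne i}\pi_i\pi_j=\pi_i(1-\pi_i)$. Each Gershgorin disc therefore lies in $[0,\,2\pi_i(1-\pi_i)]\subseteq[0,1/2]$, which gives the same sharp constant.

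As for what each route buys:
\begin{itemize}
\item Your variance viewpoint obtains semidefiniteness and the norm bound from a single probabilistic interpretation. It also makes sharpness of $1/2$ transparent: take $n=2$, $\pi=(\tfrac12,\tfrac12)$ and $v=(1,-1)/\sqrt2$.
\item The Gershgorin route is shorter and needs no external inequality. It also yields the $\pi$-dependent bound $\lambda_{\max}(S)\le\max_i 2\pi_i(1-\pi_i)$, which is far below $1/2$ when the softmax concentrates near a vertex of $\Delta_n$.
\end{itemize}
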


\begin{proof}
\ref{item:logexp_smooth}. Write $\pi = \dlogexp(u)$ and $\pi_j$ for its $j$-th component. Then $\pi_j = q_j e^{u_j}/\ip{q,\vexp(u)}$ is $\cC^\infty$, so $\logexp_q$ is  $\cC^\infty(\rr^n)$. The Hessian $S(u) = \Diag{\pi} -\pi\pi\T$ is positive semidefinite because, for any $n$-vector $v$, $v\T S(u)\,v = \sum_j \pi_j v_j^2 - \bigl(\sum_j \pi_j v_j\bigr)^2 \ge 0$ by the Cauchy--Schwarz inequality applied to $\pi$.
Since $S_{ii} = \pi_i(1-\pi_i)$ and $\sum_{j\neq i}|S_{ij}| = \pi_i\sum_{j\neq i}\pi_j = \pi_i(1-\pi_i)$, Geršgorin's theorem \cite[Theorem 6.1.1]{horn_MatrixAnalysis_2017} gives $\theta \le 2\pi_i(1-\pi_i) \le 1/2$ for every eigenvalue $\theta$ of $S$, so $\norm{S(u)} \le 1/2$.

\ref{item:logexp_simplex}. Each component $\pi_j = q_j e^{u_j}/\ip{q,\vexp(u)} > 0$ and $\sum_j \pi_j = 1$.

\ref{item:logexp_composed}. Again from~\ref{item:logexp_simplex}, $p(y)=\dlogexp(A\T y - c) \in \Delta_n$, and hence
\[
\|\nabla_y \logexp_q(A\T y - c)\| = \|A p(y)\| \leq A_{\max}
\]
which yields the stated Lipschitz constant for $y \mapsto\logexp_q(A\T y - c)$. Next, the map $y \mapsto A\T y - c$ is $\norm{A}$-Lipschitz and, by~\ref{item:logexp_smooth}, $\dlogexp$ is $(1/2)$-Lipschitz. The Lipschitz constant for $p(y)$ then follows from the composition of Lipschitz functions.
\end{proof}

\section{Damped inexact Newton on the scale-shape dual} \label{sec:alg2}

\Cref{sec:new_approach} reduced the primal problem to solving $F(z)=0$, where $z=(y,\tau)\in\rr^m\times\rr_{++}$ bundles the dual and scale variables. This section develops a damped inexact Newton iteration for that equation. The analysis rests on three lemmas. First, the merit function $\rho=\norm{F}$ has compact sublevel sets whose $\tau$-projections are bounded away from~$0$ and~$\infty$, with explicit constants (\cref{lem:level cpt}). Second, $DF$ is uniformly nonsingular on each level set, and $\norm{DF^{-1}}$ is controlled by those $\tau$-bounds (\cref{lem:DF inv bd}). Third, the damped backtracking line search terminates and produces a strictly decreasing sequence of merit values (\cref{lem:alg well-defined}). \Cref{subsec:inexact} states the algorithm. The fixed-scale specialization in which $\tau$ is prescribed appears at the end of the section. The convergence theorem in \cref{sec:convergence} uses these three lemmas directly.

\subsection{Merit function and level set compactness}

An inexact Newton method is applied to compute the unique root $(y^*,\tau^*)$ of the map $F$ defined in~\eqref{eq:dual_grad}; see \cref{thm:primal_dual_duality}\ref{item:uniqueness}. Step sizes are
chosen to guarantee descent in the merit function
\begin{equation*}\label{eq:rho_def}
\rho(y,\tau) = 
\begin{cases}
\|F(y, \tau)\|& \text{if $\tau>0$,}\\
+\infty  &\text{otherwise}.
\end{cases}
\end{equation*}
The function $\rho$ is continuous on $\rr^m\times\rr_{++}$ but nonsmooth, and requires maintaining $\tau$ positive throughout the iterations.

The convergence analysis in \cref{sec:convergence} also requires the iterates to remain in a sublevel set $\lev{\rho}{\beta}:=\rset{(y,\tau)}{\rho(y,\tau)\le\beta}$ on which $DF$ is uniformly invertible. By~\eqref{eq:dual_hess}, $\norm{DF(y,\tau)^{-1}}$ depends on upper and lower bounds on~$\tau$ together with a bound on $\norm{y}$. The level set must therefore keep $\tau$ bounded away from $0$ and $\infty$ and $\norm{y}$ bounded. \Cref{lem:level cpt} below describes all three bounds explicitly.

The bounds are expressed in terms of the problem data
\begin{equation}\label{eq:level-constants}
c_{\min} := \min_{i} c_i, \quad
c_{\max} := \max_{i} c_i, \quad
A_{\max} := \max_i \|A_i\|, \quad
q_{\min} := \min_i q_i,
\end{equation}
where $A_i$ denotes the $i$th column of $A$.

They also involve the Lambert $W$ function~\cite{CGHJK1996}, which inverts the transcendental equations arising from the $\tau\log\tau$ term in~\eqref{eq:dual_func} and is the unique solution of
\begin{equation}\label{eq:Lambert}
W(\xi) e^{W(\xi)} = \xi
\end{equation}
for $\xi \geq 0$; equivalently, $W$ is the inverse of $t \mapsto t e^t$ on $[0,\infty)$. Moreover, $W(\xi) > 0$ for $\xi > 0$, since $W^{-1}(\theta) = \theta e^\theta > 0$ for $\theta > 0$.

\begin{lemma}[Level-set compactness and $(y,\tau)$ bounds]  \label{lem:level cpt}
The level set $\lev{\rho}{\beta}$ %
is compact for every $\beta\geq0$. Moreover, computable bounds on the projections
of $\lev{\rho}{\beta}$ are given by
\begin{align*}
  \taubetamax
    &:= \sup\rset{\tau}{\exists\, y\in\rr^m:\,(y,\tau)\in\lev{\rho}{\beta}}
    \le B/W(B e^\theta),
    \tag{i}\label{item:taubetamax}\\
  \taubetamin
    &:= \inf\rset{\tau}{\exists\, y\in\rr^m:\,(y,\tau)\in\lev{\rho}{\beta}}
    \ge \lambda(A_{\max})^{-2}\, W(\zeta)>0,
    \tag{ii}\label{item:taubetamin}\\
  \ymax
    &:= \sup\rset{\norm{y}}{\exists\, \tau>0:\,(y,\tau)\in\lev{\rho}{\beta}}
    \le \tfrac{1}{\lambda}(\norm{b} + \taubetamax A_{\max} + \beta),
    \tag{iii}\label{item:ymax}
\end{align*}
where
\begin{subequations}\label{eq:level-aux}
\begin{align}
  \theta &:= 1 - \beta - \log\onorm{q} + c_{\min}, \label{eq:theta}\\
  B &:= \tfrac{1}{4\lambda}(\beta + \norm{b})^2, \label{eq:B}\\
  \zeta &:= \tfrac{1}{\lambda} A_{\max}^2 q_{\min}\,
    \exp\bigl({-}1 - \beta - c_{\max} - \tfrac{1}{\lambda} A_{\max}(\norm{b}+\beta) \bigr). \label{eq:zeta}
\end{align}
\end{subequations}
\end{lemma}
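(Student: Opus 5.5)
The plan is to read off the three bounds from the two scalar inequalities that define membership in $\lev{\rho}{\beta}$, namely $\norm{F_y(y,\tau)}\le\beta$ and $|F_\tau(y,\tau)|\le\beta$. Compactness then follows from these bounds and continuity. Throughout I write $u:=A\T y-c$, $L(y):=\logexp_q(u)$ and $p=p(y)=\dlogexp(u)\in\rint\Delta_n$ (\cref{lem:logexp_properties}). The scalar condition gives
\[
-\beta-1+L(y)\;\le\;\log\tau\;\le\;\beta-1+L(y).
\]

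\emph{Upper bound \eqref{item:taubetamax}.} I will use the Gibbs-type identity $L(y)=\ip{p,u}-\tsum_j p_j\log(p_j/q_j)$, which follows from $\log p_j=\log q_j+u_j-L(y)$. I combine it with the bound $\tsum_j p_j\log(p_j/q_j)\ge-\log\onorm{q}$ for $p\in\Delta_n$, which is Jensen's inequality or nonnegativity of KL divergence against $q/\onorm{q}$. This yields $L(y)\le\ip{p,A\T y}-c_{\min}+\log\onorm{q}$, and hence $\log\tau+\theta\le\ip{Ap,y}$ with $\theta$ as in \eqref{eq:theta}.

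Next I take the inner product of $F_y$ with $y$. This gives $\lambda\norm{y}^2+\tau\ip{Ap,y}=\ip{b-F_y,y}\le(\norm{b}+\beta)\norm{y}$, so $\tau\ip{Ap,y}\le\max_{r\ge 0}\{(\norm{b}+\beta)r-\lambda r^2\}=B$. Together these give $\tau(\log\tau+\theta)\le B$.

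It remains to invert this inequality. The map $t\mapsto t(\log t+\theta)$ is increasing wherever it is positive. Setting $s=\log t+\theta$, the equation $t(\log t+\theta)=B$ becomes $se^s=Be^\theta$, so its root is $\bar\tau=e^{W(Be^\theta)-\theta}=B/W(Be^\theta)$. Therefore $\tau\le\bar\tau$. (If $\log\tau+\theta\le 0$, then $\tau\le e^{-\theta}\le\bar\tau$ trivially.)

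\emph{Bound \eqref{item:ymax}.} From $\lambda y=b-\tau Ap-F_y$ and $\norm{Ap}\le A_{\max}$ (since $p\in\Delta_n$), I get $\lambda\norm{y}\le\norm{b}+\tau A_{\max}+\beta$. Taking the supremum over $\tau\le\taubetamax$ gives \eqref{item:ymax}.

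\emph{Lower bound \eqref{item:taubetamin}.} I use $L(y)\ge\log(q_je^{u_j})$ for any single index $j$. Choosing any $j$ and using $A_j\T y\ge-A_{\max}\norm{y}$ gives $L(y)\ge\log q_{\min}-c_{\max}-A_{\max}\norm{y}$. I then insert the $\tau$-dependent version of the bound just obtained, $\norm{y}\le\tfrac1\lambda(\norm{b}+\beta+\tau A_{\max})$, rather than the one involving $\taubetamax$. This yields
\[
\log\tau+\tfrac{A_{\max}^2}{\lambda}\tau\ \ge\ -1-\beta+\log q_{\min}-c_{\max}-\tfrac{A_{\max}}{\lambda}(\norm{b}+\beta).
\]
Multiplying by $A_{\max}^2/\lambda$ after exponentiating and setting $t=A_{\max}^2\tau/\lambda$ gives $te^t\ge\zeta$. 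Hence $t\ge W(\zeta)>0$, which is \eqref{item:taubetamin}.

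\emph{Compactness.} The set $\lev{\rho}{\beta}$ is bounded by \eqref{item:taubetamax}--\eqref{item:ymax}. For closedness, take a convergent sequence $(y_k,\tau_k)\in\lev{\rho}{\beta}$. Its $\tau$-limit is at least $\taubetamin>0$, so the limit lies in the open set $\rr^m\times\rr_{++}$, where $\rho$ is continuous. Hence $\rho\le\beta$ at the limit.

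The main obstacle I expect is the upper bound on $\tau$. It requires the right combination: the entropy identity for $\logexp_q$ must produce exactly the term $\ip{Ap,y}$ that the $F_y$ inner product controls. It then needs a careful monotonicity argument for inverting $\tau(\log\tau+\theta)\le B$ through Lambert $W$, including the case where $\log\tau+\theta\le 0$.
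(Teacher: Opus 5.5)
Your proposal is correct and follows the paper's proof essentially step for step. The shared steps are: the entropy identity for $\logexp_q$ combined with $\ip{y,F_y(y,\tau)}$ to reach $\tau(\log\tau+\theta)\le B$, then its Lambert~$W$ inversion; the bound $\norm{y}\le\frac1\lambda(\norm{b}+\tau A_{\max}+\beta)$; and the single-index lower bound on $\logexp_q$ fed with that $\tau$-dependent bound to obtain $te^t\ge\zeta$. Your deviations are minor tightenings rather than a different route: KL nonnegativity in place of the log-sum inequality, explicit handling of the case $\log\tau+\theta\le 0$, and a closedness argument that uses the positive lower bound on $\tau$ to keep limits inside $\rr^m\times\rr_{++}$, which is more careful than the paper's bare appeal to continuity on an open domain.
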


\begin{proof}
Let $F_y$ and $F_\tau$ be the components of $F(y,\tau)$ as given in \eqref{eq:dual_grad}, let $\beta > 0$ and suppose $(y, \tau) \in \lev{\rho}{\beta}$.
Recall from \cref{thm:primal_dual_duality}\cref{item:uniqueness} that $p := p(y) \in \rint \Delta_n$,
and set $\Sigma := \sum_{i=1}^n q_i \exp{(A_i\T y - c_i)}$.
In particular, we have 
\begin{equation} \label{eq:A_max_bound}
|A_i\T y|\leq A_{\max}\|y\| \enspace \text{for}\enspace  i=1,\dots m,\quad \text{and} \quad
\norm{Ap}\le \textstyle\sum_{i=1}^np_i\norm{A_i}\le A_{\text{max}}.
\end{equation}

\proofpart{item:taubetamax} From the definition of $p$,
\[
\log p_i = \log q_i + A_i\T y - c_i - \log \Sigma
\]
for $i = 1, \dots, n$. Multiply by $p_i$, sum over $i$, and rearrange to obtain
\begin{equation} \label{eq:logSigma}
    \log \Sigma = \ip{Ap,y} - \ip{c,p} - \textstyle\sum_{i=1}^n p_i \log(p_i/q_i)
\end{equation}
because $\sum_{i=1}^n p_i = 1$. From the definition of $F_\tau$, $\log \Sigma = \log \tau + 1 - F_\tau(y, \tau)$. Insert this into \eqref{eq:logSigma} and rearrange to obtain
\begin{equation} \label{eq:balance}
    \ip{Ap,y} = \textstyle\sum_{i=1}^n p_i \log(p_i/q_i) + \ip{c, p}
    + \log \tau  + 1 - F_\tau(y, \tau).
\end{equation}
On the other hand, we find
\begin{equation} \label{eq:Fy_ip_y}
    \ip{y, F_y(y, \tau)} = - \lambda \|y\|^2 + \ip{b, y} - \tau \ip{Ap,y}.
\end{equation}
Insert \eqref{eq:balance} into \eqref{eq:Fy_ip_y} and rearrange to obtain 
\begin{equation} \label{eq:tau_eqlty_relation}
    \tau \left( \log \tau + 1 - F_\tau(y, \tau) +  \textstyle\sum_{i=1}^n p_i \log \frac{p_i}{q_i} 
    + \ip{c, p} \right) =  - \lambda \|y\|^2 + \ip{b, y} -  \ip{y, F_y(y, \tau)}. 
\end{equation}
Since $(y, \tau) \in \lev{\rho}{\beta}$ it follows that $|F_\tau(y, \tau)| \leq \beta$ and 
$\|F_y(y, \tau)\| \leq \beta$, and as $p\in \Delta_n$ one has 
$\ip{c, p} \geq c_{\min}$. 
Moreover, the log-sum inequality \cite[Thm.~2.7.1]{CoverThomas2006}
gives 
\[
\textstyle\sum_{i=1}^n p_i \log \frac{p_i}{q_i} \geq \left( \sum_{i=1}^n p_i \right) \log \left(\frac{\sum_{i=1}^n p_i}{\sum_{i=1}^n q_i}\right) = 1 \cdot \log\left( \frac{1}{\sum_{i=1}^n q_i}\right)
=-\log\onorm{q}.
\]
Use \eqref{eq:theta} and 
place these inequalities into \eqref{eq:tau_eqlty_relation} to obtain
\begin{align*}
    \tau \left( \log \tau + \theta \right) 
    &\leq  - \lambda \|y\|^2 + \ip{b, y}  - \ip{y, F_y(y, \tau)}\\
    &\leq  - \lambda \|y\|^2 + \|b\| \|y\|  + \beta \|y\|  \\
    &\leq \sup_{t \geq 0}\ \{- \lambda t^2 + (\|b\| + \beta)t\} \\
    &= \tfrac1{4\lambda}(\|b\| + \beta)^2 \equiv B.
\end{align*} 
Set $u := \log\tau$ and $v := u + \theta$. The constraint becomes
$e^{v-\theta} v \le B$, i.e., $v e^v \le B e^\theta$.
Since $t \mapsto t e^t$ is strictly increasing on $[-1,\infty)$, it follows that $v \le W(B e^\theta)$.
Using the identity $e^{W(t)} = t/W(t)$, we have
\[
  \tau = e^u = e^{v-\theta} \le \frac{e^{W(Be^\theta)}}{e^\theta} = \frac{Be^\theta/W(Be^\theta)}{e^\theta} = \frac{B}{W(Be^\theta)},
\]
which establishes \partref{item:taubetamax}.

\proofpart{item:taubetamin} Since
$\| {-}\lambda y + b - \tau A p\|= \|F_y(y, \tau)\|  \leq \beta$,
the reverse triangle inequality and~\eqref{eq:A_max_bound} yield
\begin{equation}\label{eq:y-tau-bound}
  \|y\| \leq \tfrac{1}{\lambda}(\|b\| + \tau A_{\max} + \beta).
\end{equation}
Moreover, as $|F_\tau(y,\tau)| \leq \beta$, we have
$- \log \Sigma + \log \tau + 1 \geq - \beta$, and so
\begin{equation}\label{eq:tauEst1}
\tau \geq e^{-1 - \beta} \Sigma.
\end{equation}
Then, as $|A_i\T y|\leq A_{\max}\|y\|$, we have
\begin{equation}\label{eq:tauEst2}
\Sigma \geq q_{\min} \exp(- A_{\max} \|y\| - c_{\max}).
\end{equation}
Combining \eqref{eq:y-tau-bound}, \eqref{eq:tauEst1}, and~\eqref{eq:tauEst2}, we arrive at
\[
\tau \geq
q_{\min} \exp{\left(-1 -\beta - c_{\max} - \tfrac1\lambda A_{\max}(\|b\| + \tau A_{\max} + \beta )\right)}.
\]
Or, equivalently,
\begin{equation}\label{eq:tau bd1}
 \tfrac1\lambda A_{\max}^2 \tau \exp{(A^2_{\max} \tau/\lambda)} \geq \tfrac1\lambda A_{\max}^2 q_{\min} \exp{\left(-1 -\beta - c_{\max} - \tfrac1\lambda A_{\max}(\|b\| + \beta )\right)}
\end{equation}
for any $(y,\tau)\in \lev{\rho}{\beta}$.
Setting $w := \frac1\lambda A_{\max}^2\tau$, equation~\eqref{eq:tau bd1} takes the form $w e^w \geq \zeta$, where $\zeta$ is defined by~\eqref{eq:zeta}. Since $W$ is increasing on $\rr_+$ with $W(w e^{w})=w$ (cf.~\eqref{eq:Lambert}), we obtain
\[
\tfrac1\lambda A_{\max}^2\tau = w = W(w e^w)\ge W(\zeta),
\]
or equivalently, $\tau \ge \lambda W(\zeta)/A_{\max}^2$,
which establishes \partref{item:taubetamin}.

\proofpart{item:ymax} Combining~\eqref{eq:y-tau-bound} with \partref{item:taubetamax} gives
\begin{equation*}\label{eq:ymax}
  \|y\| \leq \tfrac{1}{\lambda}(\|b\| + \tau A_{\max} + \beta) \leq
  \tfrac1\lambda(\|b\| +  \taubetamax A_{\max} + \beta),
\end{equation*}
which establishes \partref{item:ymax}.

The compactness of $\lev{\rho}{\beta}$ now follows from combining \partref{item:taubetamax}--\partref{item:ymax} for boundedness, and the continuity of $\rho$ on the open domain $\rr^m \times \rr_{++}$ for closedness.
\end{proof}

The bound on $\taubetamin$ in \cref{lem:level cpt} is computable from the problem data. In what follows, $\htaubetamin$ denotes any positive lower estimate of $\taubetamin$; for instance, the closed-form value $\lambda W(\zeta)/A_{\max}^2$ from \cref{lem:level cpt}\ref{item:taubetamin} suffices. \Cref{alg:inexact Newton} below uses $\htaubetamin$ to safeguard $\tau$ away from~$0$.

\subsection{Inexact Newton algorithm}\label{subsec:inexact}

\Cref{alg:inexact Newton} realizes the inexact Newton iteration for $F(z)=0$. Each step computes an inexact Newton direction $d^k$, shortens it to a provisional length $\bar\alpha_k\le 1$ that keeps the updated scale $\tau_k+\bar\alpha_k\Delta\tau_k$ sufficiently positive, and then backtracks along $d^k$ until the merit function $\rho$ decreases by a fraction of the residual norm.

Two design choices deserve comment. First, \cref{step:provisional} caps a negative scale step so that $\tau_k+\bar\alpha_k\Delta\tau_k \ge \half\htaubetamin$; the factor of one-half gives the subsequent backtracking room to operate without driving $\tau$ outside $\dom\rho$. Second, the acceptance test in \cref{step:backtrack} uses the nonsmooth merit $\rho=\norm{F}$ rather than the standard $\tfrac12\norm{F}^2$. \Cref{rem:merit fnc} shows that any step satisfying the conventional Armijo condition for $\tfrac12\norm{F}^2$ also satisfies the test in \cref{step:backtrack}, so unit steps are readily accepted near the Newton direction.

\begin{algorithm}[t]
\caption{Inexact Newton with nonsmooth damping}
\label{alg:inexact Newton}
\begin{algorithmic}[1]
\Statex \textbf{Input:} Tolerance $\varepsilon\ge0$, backtracking parameters $\mu,\gamma\in(0,1)$, $\bar\eta\in[0,1)$, and $\{\eta_k\}_{k\ge 0}\subset[0,\bar\eta]$.
\State Initialize $z^0=(y^0,\tau_0)\in \R^{m}\times \R_{++}$; 
set $k\gets0$
and $\beta_0=\rho(z^0)$; choose $\beta>\beta _0$. Let 
$\htaubetamin>0$ be a lower estimate of $\taubetamin$ as in Lemma \ref{lem:level cpt}\ref{item:taubetamin}.
\State\label{step:newton}  \textbf{If} $\rho(z^k)=0$ \textbf{then} return $(y^k,\tau_k)$. \textbf{Else} find $d^k \in \rr^m \times \rr$ such that
    \begin{equation} \label{eq:inexact_newton_step}
        \| F(z^k)+DF(z^k) d^k\| \leq \eta_k \|F(z^k)\|  \quad {\rm where} \quad d^k = \begin{bmatrix}
            \Delta y^k \\
            \Delta \tau_k 
        \end{bmatrix}.
    \end{equation}
\State\label{step:provisional} \textbf{Compute provisional step.} Compute $\bar\alpha_k$ so that $\tau_k+\bar\alpha_k\,\Delta\tau_k \ge \half\htaubetamin>0$:
  \[
    \bar\alpha_k \gets
    \begin{cases}
      1 & \text{if } \tau_k+\Delta \tau_k \ge \half\htaubetamin, \\
      (\half \htaubetamin-\tau_k)/\Del\tau_k & \text{otherwise}.
    \end{cases}
  \]
\State\label{step:backtrack} \textbf{Backtracking for acceptance.}
\[
\alpha_k \gets \max_{\ell\in \Nzero}\Bigl\{\bar\alpha_k\,\gamma^{\,\ell}\;\Bigm|\;
\rho(z^k+\bar\alpha_k\,\gamma^{\,\ell} d^k)
\le \rho(z^k) + \mu\,\bar\alpha_k\,\gamma^{\,\ell}\,(\eta_k-1)\,\rho(z^k)\Bigr\}.
\]
\State  \textbf{Update iterates.}\label{step:update_iterates}\begin{equation*}
       z^{k+1} = z^k + \alpha_k d^k,
       \quad
       p^{k+1} = \dlogexp(A\T y^{k+1} - c),
       \quad
       x^{k+1} = \tau_{k+1} p^{k+1}.
    \end{equation*}
\State  \textbf{Stopping criterion.} \textbf{If} $\rho(z^{k+1})\le \varepsilon$ \textbf{then} return $(y^{k+1},\tau_{k+1})$; \textbf{else} set $k\gets k+1$ and \textbf{go to} \cref{step:newton}.
\end{algorithmic}
\end{algorithm}

The primal solution is recovered as $x=\tau\,\dlogexp(A\T y-c)$ from the returned pair $(y,\tau)$.

\begin{remark}[Choice of merit function $\rho$]\label{rem:merit fnc}
The smooth merit function $\psi(z) = \frac{1}{2}\|F(z)\|^2$ leads a conventional backtracking line search
\begin{equation}\label{eq:conventional_armijo}
    \psi(z^k + \alpha d^k) \leq \psi(z^k) + \mu \alpha \ip{\nabla \psi(z^k), d^k}.
\end{equation}
where $\mu \in (0,1/2)$ is the backtracking parameter and $\alpha$ is the step size \cite[Chapter 9.2]{boyd2004convex}.
Because $\nabla\psi(z) = DF(z)\T F(z)$,
\begin{align*}
     \ip{\nabla \psi(z^k), d^k} &= \ip{F(z^k), DF(z^k)d^k + F(z^k)} - \|F(z^k)\|^2 \\
     &\leq \|F(z^k)\|(\|DF(z^k)d^k + F(z^k)\| - \|F(z^k)\|) \\
     &\leq (\eta_k - 1) \rho(z^k)^2,
\end{align*}
where the last inequality uses the inexact Newton condition~\eqref{eq:inexact_newton_step}. 
Because $\psi = \tfrac12\rho^2$, any step size $\alpha$ satisfying \eqref{eq:conventional_armijo} gives
\[
\rho(z^{k} + \alpha d^k)^2 \leq \rho(z^k)^2 + 2 \mu \alpha (\eta_k - 1) \rho(z^k)^2
=(1-2 \mu \alpha (1-\eta_k ))\rho(z^k)^2.
\]
The basic inequality $\sqrt{1 - \theta} \leq 1 - \theta/2$ for $\theta \in [0,1]$, applied with $\theta = 2 \mu \alpha (1-\eta_k) \le 2\mu < 1$ (valid as $\alpha \le 1$, $\eta_k \in [0,1)$, and $\mu < \tfrac{1}{2}$), yields
\[
\rho(z^{k} + \alpha d^k) \leq  \rho(z^k) [1 - \mu \alpha (1-\eta_k)].
\]
This matches the backtracking condition in \cref{step:backtrack} of \cref{alg:inexact Newton}. The step-size rule therefore admits any step size that satisfies the conventional Armijo condition~\eqref{eq:conventional_armijo}.
\end{remark}

\subsection{Well-posedness of the algorithm}

Two algorithmic issues remain: existence of the inexact Newton step \eqref{eq:inexact_newton_step}, and finite termination of the backtracking rule in \cref{step:backtrack}. \Cref{lem:DF inv bd} establishes the first and collects spectrum bounds on $DF$ used in \cref{sec:convergence}. \Cref{lem:alg well-defined} describes the second.

\begin{lemma}[Jacobian bounds and Lipschitz continuity]\label{lem:DF inv bd}
Let $0 < \tau_{l} < \tau_{u} < \infty$. Parts~\ref{item:singular-bounds}--\ref{item:DF inverse} give pointwise bounds on $DF(z)$; parts~\ref{item:strip-bounds}--\ref{item:level-set-bounds} specialize these to $\tau$-strips and to sublevel sets of~$\rho$.
\begin{thmenum}
\item \label{item:singular-bounds}
  Singular value bounds. For $z = (y,\tau) \in \rr^m \times \rr_{++}$, 
  the singular values of $DF(z)$ satisfy
  \begin{equation}\label{eq:singular value bounds}
    \min\{\lam, 1/\tau\} \!\le\! \sig_{\min}(DF(z))
    \!\le\! \sig_{\max}(DF(z))
   \! \le\! A_{\max} + \max\{\lam + \half\tau\norm{A}^2,\, 1/\tau\},
  \end{equation}
  where $A_{\max}$ is defined in \eqref{eq:level-constants}.
  In particular,
  $\norm{DF(z)^{-1}}\le \max\{\lam^{-1},\, \tau\}$.
\item \label{item:DF inverse}
$DF(z)$ inverse. For all $z\in\rr^m\times\rr_{++}$,
\begin{equation*}\label{eq:DF inverse}
\begin{aligned}
DF(z)^{-1}&=\begin{bmatrix}
-\tH(z)^{-1}&-\tau \tH(z)^{-1}Ap(y)\\ -\tau p(y)\T A\T \tH(z)^{-1} &
\tau -\tau^2 p(y)\T A\T \tH(z)^{-1}Ap(y)
\end{bmatrix}
\\ &=
\begin{bmatrix}
    I& 0\\ \tau p(y)\T A\T & 1
\end{bmatrix}
\begin{bmatrix}
    -\tH(z)^{-1}& 0\\ 0& \tau
\end{bmatrix}
\begin{bmatrix}
    I&\tau Ap(y)\\ 0&1
\end{bmatrix},
\end{aligned}
\end{equation*}
where $z=(y,\tau)$ and $\tH(z):=[\lam I+\tau A\Diag{p(y)}A\T ]$ with $p(y)$ and $S(y)$ being the continuous functions of $y\in\rr^m$ defined in \eqref{eq:dual_grad} and \eqref{eq:S(y)}, respectively.
Moreover,
\[
    \lim_{\stackrel{y\rightarrow \by}{\tau\downarrow 0}}DF(y,\tau)^{-1}=-\lam^{-1}\begin{bmatrix} I&0\\0&0\end{bmatrix} \text{ for all } \by\in\rr^m.
\]
\item \label{item:strip-bounds}
  Bounds on $\tau$-strips.
  We have
 \[
     \mathL(\tau_{l}, \tau_{u})\!:=\!
     \sup\rset{\norm{DF(z)}}{z \in \rr^m \!\times\! [\tau_{l}, \tau_{u}]} 
     \!\le\! %
     A_{\max} + \max\left\{\lam + \frac{\tau_{u}\norm{A}^2}2, \frac1{\tau_{l}}\right\}
\]
and
\[      
\mathM(\tau_u):=\sup\rset{\norm{DF(z)^{-1}}}
      {z \in \rr^m \times (0, \tau_{u}]} 
      \le %
      \max\{\lam^{-1}, \tau_{u}\}.
   \]

\item \label{item:lipschitz}
  Lipschitz continuity.
  $F$ is Lipschitz continuous on $\rr^m \times [\tau_{l}, \tau_{u}]$
  with constant $\mathL(\tau_{l}, \tau_{u})$.

\item \label{item:level-set-bounds}
  Level-set bounds.
  For $\beta > 0$, let $\taubetamin$ and $\taubetamax$ be as in Lemma~\ref{lem:level cpt}.
  Then, $F$ is Lipschitz continuous on $\rr^m \times [\half\taubetamin, 2\taubetamax]$
  with constant $\mathcal{L}(\beta) := \mathL(\half\taubetamin, 2\taubetamax)$,
  and, for all $z\in\lev{\rho}{\beta}$, $\norm{DF(z)^{-1}} \le  \mathM(2\taubetamax)$.
  Moreover, $\lev{\rho}{\beta} \subset \interior(\rr^m \times [\half\taubetamin, 2\taubetamax])$
  by Lemma~\ref{lem:level cpt}.
\end{thmenum}
\end{lemma}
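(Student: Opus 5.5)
The plan is to work from the block structure \eqref{eq:dual_hess}. The key algebraic observation is that
\[
H=-\lam I-\tau A\,S(y)A\T=-\tH(z)+\tau\, g g\T,
\qquad g=-Ap(y),
\]
because $S(y)=\Diag{p(y)}-p(y)p(y)\T$.

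\emph{Part \ref{item:DF inverse}.} I would eliminate using the scalar $(2,2)$ entry $1/\tau$. Its Schur complement is $H-\tau gg\T=-\tH(z)$, which gives the factorization
\[
DF(z)=\begin{bmatrix} I&\tau g\\0&1\end{bmatrix}
\begin{bmatrix}-\tH(z)&0\\0&1/\tau\end{bmatrix}
\begin{bmatrix} I&0\\ \tau g\T&1\end{bmatrix}.
\]
Here $\tH(z)\succeq\lam I\succ0$, since $p(y)\in\rint\Delta_n$ by \cref{lem:logexp_properties}\ref{item:logexp_simplex}. Inverting the three factors and substituting $g=-Ap(y)$ produces both displayed forms of $DF(z)^{-1}$; expanding the product gives the explicit block matrix. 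For the limit, $\tH(y,\tau)\to\lam I$ as $y\to\by$ and $\tau\downarrow0$, by continuity of $p$. Every entry of $DF^{-1}$ except $-\tH^{-1}$ carries a factor $\tau$, and $\norm{Ap(y)}\le A_{\max}$ stays bounded, so those entries vanish and the stated limit follows.

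\emph{Part \ref{item:singular-bounds}.} Since $DF(z)$ is symmetric, its singular values are the absolute values of its eigenvalues $\mu_1\ge\dots\ge\mu_{m+1}$.
\begin{itemize}
\item[] \emph{Lower bound (the main obstacle).} A direct eigen-equation elimination runs into a rank-one term of indefinite sign, so I would use Courant--Fischer instead.
\begin{itemize}
\item[] \emph{Negative eigenvalues.} We have $\mu_2=\min_{\dim V=m}\max_{v\in V}v\T DF\,v/\norm{v}^2$. Choosing $V=\rr^m\times\{0\}$ gives $\mu_2\le\max_u u\T Hu/\norm{u}^2\le-\lam$, because $H\preceq-\lam I$ by \cref{thm:primal_dual_duality}\ref{item:strong_concavity}. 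Hence $\mu_2,\dots,\mu_{m+1}\le-\lam$.
\item[] \emph{Positive eigenvalue.} The Rayleigh quotient at $(0,1)$ equals $1/\tau$, so $\mu_1\ge1/\tau$.
\end{itemize}
Together these give $\sig_{\min}\ge\min\{\lam,1/\tau\}$, and hence $\norm{DF^{-1}}\le\max\{\lam^{-1},\tau\}$.
\item[] \emph{Upper bound.} Split $DF$ into its block-diagonal part and its off-diagonal part. By \cref{lem:logexp_properties}\ref{item:logexp_smooth}, $\norm{H}\le\lam+\tfrac12\tau\norm{A}^2$, so the block-diagonal part has norm at most $\max\{\lam+\tfrac12\tau\norm{A}^2,1/\tau\}$. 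The off-diagonal part has norm $\norm{g}=\norm{Ap(y)}\le A_{\max}$ by \eqref{eq:A_max_bound}. The triangle inequality then gives the upper bound in \eqref{eq:singular value bounds}.
\end{itemize}

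\emph{Parts \ref{item:strip-bounds}--\ref{item:level-set-bounds}.}
\begin{itemize}
\item[] \emph{Strip bounds.} The right-hand sides in \eqref{eq:singular value bounds} are monotone in $\tau$, so taking suprema over the strip gives the bounds on $\mathL$ and $\mathM$.
\item[] \emph{Lipschitz continuity.} The set $\rr^m\times[\tau_l,\tau_u]$ is convex and $F$ is $\cC^1$ there. The mean value inequality $\norm{F(z)-F(z')}\le\sup_{[z,z']}\norm{DF}\,\norm{z-z'}$ then yields part \ref{item:lipschitz}.
\item[] \emph{Level sets.} For part \ref{item:level-set-bounds}, \cref{lem:level cpt} shows that every $z\in\lev{\rho}{\beta}$ has $\tau\in[\taubetamin,\taubetamax]$. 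This interval lies in $(\half\taubetamin,2\taubetamax)$, which gives the inclusion in the interior of the strip. The Lipschitz statement is part \ref{item:lipschitz} applied with $\tau_l=\half\taubetamin$ and $\tau_u=2\taubetamax$. The inverse bound follows from $\tau\le2\taubetamax$ and part \ref{item:strip-bounds}.
\end{itemize}
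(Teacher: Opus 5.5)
Your proposal is correct and follows the paper's proof essentially step for step. Both use the same block-diagonal/off-diagonal splitting for $\sig_{\max}$ and the same strip, mean-value and level-set specializations. Your Courant--Fischer argument, with the subspace $\rr^m\times\{0\}$ and the vector $(0,1)$, is exactly the paper's Cauchy interlacing plus inclusion-principle step. It also makes the paper's separate appeal to Haynsworth inertia additivity unnecessary, since $\mu_1\ge 1/\tau>0>-\lam\ge\mu_2$ already fixes the inertia. Eliminating the scalar $(2,2)$ entry via $H=-\tH(z)+\tau gg\T$ simply derives the factored inverse that the paper checks by direct multiplication.
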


\begin{proof} 
Part~\ref{item:singular-bounds}.
Recall from~\eqref{eq:dual_hess} that $DF(z)$ has the saddle-point structure defined by blocks (suppressing the $y$ and $\tau$ arguments for brevity)
$H = -\lambda I - \tau A S A\T$ and $g = -A p$,
where $p = p(y) \in \rint(\Delta_n)$
and $S = \Diag{p} - pp\T$
with $\norm{S}\le\half$ by Lemma 
\ref{lem:logexp_properties}\ref{item:logexp_smooth}.

To obtain the upper bound on $\sigma_{\max}(DF)$,
decompose $DF$ (cf.~\eqref{eq:dual_hess}) as
\begin{equation}\label{eq:DF decomposition}
DF = M_1 + M_2, \enspace\text{with}\enspace
 M_1 = \begin{bmatrix}H & 0\\ 0 & 1/\tau\end{bmatrix} \quad\text{and}\quad
 M_2 = \begin{bmatrix}0 & g\\ g\T & 0\end{bmatrix}.
\end{equation}
Because $M_2$ is symmetric and rank-2 with eigenvalues $\pm\norm{g}$, we have $\norm{M_2}=\norm{g}=\norm{Ap}\le A_{\max}$ as in \eqref{eq:A_max_bound}. Therefore,
\[
\sigma_{\max}(DF)=\norm{DF}\le \norm{M_1}+\norm{M_2}
\le\max\left\{\lam+\half\tau\norm{A}^2,\, 1/\tau\right\}+A_{\max}.
\]

We now derive the lower bound on $\sigma_{\min}(DF)$. 
Cauchy's interlacing theorem~\cite[Theorem 4.3.17]{horn_MatrixAnalysis_2017} applied to the principal submatrix $H$ implies
\[
\lambda_1(DF) \le \lambda_1(H) \le \lambda_2(DF) \le \cdots \le \lambda_m(H) \le \lambda_{m+1}(DF).
\]
Since $H = -\lambda I - \tau ASA\T$ and $\tau ASA\T \succeq 0$, we have $H \preceq -\lambda I$. Thus, by the above interlacing properties, the leftmost (smallest) eigenvalue of $DF$ has magnitude at least~$\lambda$.
For the rightmost (largest) eigenvalue, the Schur complement of $H$ in $DF$ equals $1/\tau - \ip{g, H^{-1}g} \ge 1/\tau$ and is positive because $H \prec 0$. By inertia additivity~\cite[Theorem~1]{haynsworthDeterminationInertiaPartitioned1968}, $DF$ has exactly one positive eigenvalue. The inclusion principle~\cite[Theorem~4.3.28]{horn_MatrixAnalysis_2017} applied to the $1\times 1$ principal submatrix $[1/\tau]$ of $DF$ establishes that this eigenvalue is at least $1/\tau$. Summarizing,
$\sigma_{\min}(DF) \ge \min\{\lambda,\,1/\tau\}$.
Hence $DF(z)^{-1}$ exists with
$\norm{DF(z)^{-1}}\le \max\{\lam^{-1},\,\tau\}$.

Part~\ref{item:DF inverse}. The formulas for $DF(z)^{-1}$ are established through matrix
multiplication. The limit formula follows directly from the formulas for the inverse. 

Part~\ref{item:strip-bounds} follows by taking the supremum and infimum of the bounds in~\eqref{eq:singular value bounds} over the strip $\rr^m \times [\tau_{l}, \tau_{u}]$,
where the supremum in the definition of $\mathM(\tau_u)$ is well defined due to the limit
formula in Part~\ref{item:DF inverse}.

Part~\ref{item:lipschitz} holds since $F$ is continuously differentiable and $\norm{DF(z)} \le \mathL(\tau_{l}, \tau_{u})$ uniformly on the strip.

Part~\ref{item:level-set-bounds} specializes parts~\ref{item:strip-bounds} and~\ref{item:lipschitz} to the strip $[\frac{1}{2}\taubetamin, 2\taubetamax]$; the inclusion $\lev{\rho}{\beta} \subset \interior(\rr^m \times [\frac{1}{2}\taubetamin, 2\taubetamax])$ is immediate from \cref{lem:level cpt}.
\end{proof}

By ensuring the Jacobian is never singular, \Cref{lem:DF inv bd} establishes that \cref{step:newton} of \cref{alg:inexact Newton} is
implementable. The following lemma addresses the termination of the backtracking line search in \cref{step:backtrack} and establishes the strict monotonic decrease of the merit function along the iterates.   

\begin{lemma}[Backtracking termination and monotonic decrease]\label{lem:alg well-defined} The sequence $\{z^k\}$ generated by \cref{alg:inexact Newton} is well defined. In particular, the following hold:
\begin{thmenum}
    \item there exists $\ell_k\in\Nzero$ such that $\alf_k:=\bar\alf_k\gam^{\ell_k}$ satisfies the Armijo condition in \cref{step:backtrack};
    \item $\rho(z^{k+1})<\rho(z^k)< \beta$.
\end{thmenum}
\end{lemma}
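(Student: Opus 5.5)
The argument is an induction on $k$. The hypothesis is that $z^k$ is defined, $\tau_k>0$, and $\rho(z^k)\le\beta_0<\beta$. The base case holds because $\beta>\beta_0=\rho(z^0)$. If $\rho(z^k)=0$ the algorithm stops, so assume $\rho(z^k)>0$.

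The direction $d^k$ exists because $DF(z^k)$ is nonsingular by \cref{thm:primal_dual_duality}\ref{item:DF-invertible}; the exact Newton step $-DF(z^k)^{-1}F(z^k)$ satisfies \eqref{eq:inexact_newton_step} for every $\eta_k\ge0$. Next, $\bar\alpha_k\in(0,1]$. Since $z^k\in\lev{\rho}{\beta}$, \cref{lem:level cpt} gives $\tau_k\ge\taubetamin\ge\htaubetamin>\half\htaubetamin$. Hence in the second case of \cref{step:provisional} we have $\Delta\tau_k<0$ and $\tau_k+\Delta\tau_k<\half\htaubetamin<\tau_k$, which gives $0<\bar\alpha_k<1$. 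Because $\tau$ is affine along the segment, $\tau_k+t\Delta\tau_k\ge\half\htaubetamin>0$ for all $t\in[0,\bar\alpha_k]$. So the whole segment $\{z^k+td^k: t\in[0,\bar\alpha_k]\}$ lies in a compact subset of $\rr^m\times\rr_{++}$, where $F$ is $\cC^1$.

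For part (i), use a first-order expansion along this segment:
\[
F(z^k+td^k)=F(z^k)+tDF(z^k)d^k+o(t)=(1-t)F(z^k)+t\bigl(F(z^k)+DF(z^k)d^k\bigr)+o(t).
\]
Taking norms and applying \eqref{eq:inexact_newton_step} gives
\[
\rho(z^k+td^k)\le(1-t)\rho(z^k)+t\eta_k\rho(z^k)+o(t)=\rho(z^k)+t(\eta_k-1)\rho(z^k)+o(t).
\]
Since $\mu<1$, $\eta_k\le\bar\eta<1$ and $\rho(z^k)>0$, the gap $(1-\mu)t(1-\eta_k)\rho(z^k)$ is positive and of exact order $t$, so it dominates the $o(t)$ term for all sufficiently small $t>0$. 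The trial steps $t=\bar\alpha_k\gamma^\ell$ tend to $0$ as $\ell\to\infty$, so the Armijo test in \cref{step:backtrack} holds for some finite $\ell_k$. This is the step that needs the most care. One must make sure the expansion is applied only to points with $\tau>0$, which is exactly what the safeguard $\bar\alpha_k$ secures. One must also use that $F$ is differentiable, not $\rho$, since $\rho$ is nonsmooth; the triangle inequality sidesteps this.

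For part (ii), the accepted step gives $\rho(z^{k+1})\le(1-\mu\alpha_k(1-\eta_k))\rho(z^k)<\rho(z^k)$, because $\alpha_k>0$ and $1-\eta_k>0$. Hence $\rho(z^{k+1})<\rho(z^k)\le\beta_0<\beta$. In particular $\rho(z^{k+1})<\infty$, so $\tau_{k+1}>0$, and $z^{k+1}\in\lev{\rho}{\beta}$. This closes the induction and shows the sequence is well defined.
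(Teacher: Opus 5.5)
Your proof is correct and follows the paper's argument. The paper bounds $\rho'(z^k;d^k)\le\norm{F(z^k)+DF(z^k)d^k}-\norm{F(z^k)}\le(\eta_k-1)\rho(z^k)<0$ via the convex-composite calculus for $\norm{\cdot}\circ F$ and argues by contradiction as $\ell\to\infty$, whereas you obtain the same first-order estimate directly from a Taylor expansion of $F$ and convexity of the norm, then close with the same induction for part (ii) (your explicit check that $\bar\alpha_k\in(0,1]$ and $\tau\ge\tfrac12\htaubetamin$ along the segment is a detail the paper leaves to openness of $\dom\rho$).
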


\begin{proof}
We analyze the directional derivative of $\rho$ to establish that backtracking terminates.
For any closed proper convex function 
    $\map{h}{\rr^m}{\extrr}$, the directional derivative
    $h'(x;d)$ exists for all
    $x\in\dom(h)$ in all directions $d\in\rr^m$ and is given by
    $h'(x;d) = \inf_{t > 0} \tfrac{1}{t}(h(x + td) - h(x))$
\cite[Theorem~23.1]{rockafellar-1970}.
Following \citet[Section 2]{burkeSecondOrderNecessary1987}, 
    for $f:=g\circ H$ where 
    $\map{g}{\rr^m}{\rr}$ is 
    convex and $\map{H}{\cU}{\rr^m}$ is continuously differentiable at $x\in\cU$ with $\cU\subset\rr^n$ open, 
    we have
    $f'(x;d)$ exists for all $d\in\rr^n$ with   
    \begin{equation*}\label{eq:hH_dd}
         f'(x;d)= g'(H(x);DH(x)d)
         \le g(H(x)+DH(x)d)-g(H(x)).
    \end{equation*}

The merit function $\rho(z)=\norm{F(z)}$ has this form with $g = \norm{\cdot}$, and $H = F$, and $\cU=\R^m\times \R_{++}$, so
    \begin{equation*}\label{eq:h_dd}
        \rho'(z^k;d^k)
        =\inf_{t>0}\frac{\|F(z^k) + tDF(z^k)d^k\| - \|F(z^k)\|}{t}
       \leq  \|F(z^k) + DF(z^k)d^k\| - \|F(z^k)\|.
\end{equation*}
Here, the right-hand side is finite by \cref{step:newton} which, in turn, is well-defined by \cref{lem:DF inv bd}, and the former also yields
    \begin{equation}\label{eq:phi ne zero}
    \rho'(z^k;d^k)\le  \|F(z^k) + DF(z^k)d^k\| - \|F(z^k)\|
    \le (\eta_k-1)\|F(z^k)\|=(\eta_k-1)\rho(z^k)<0.
    \end{equation}
Assume to the contrary that for all $\ell\in \bN_0$, 
\[
\rho(z^k+\gam^\ell\bar \alpha_k d^k)> \rho(z^k)+\mu\bar \alpha_k\gam^\ell(\eta_k-1) \rho(z^k),
\]
that is,
\[
\frac{\rho(z^k+\gam^\ell\bar \alpha_k d^k)-\rho(z^k)}{\bar \alpha_k\gam^\ell}>\mu(\eta_k-1)\rho(z^k).
\]
Using \eqref{eq:phi ne zero} and passing to the limit as $\ell\to\infty$ we obtain (since $z^k\in \dom \rho$ which is open) that
\[
0>(\eta_k-1)\rho(z^k)\ge
\rho'(z^k; d^k)\geq  \mu(\eta_k-1)\rho(z^k),
\]
a contradiction as $\mu\in (0,1)$. Therefore, $\ell_k$ and $\alf_k$ are well defined. Since $z^0 \in \lev{\rho}{\beta_0} \subset \lev{\rho}{\beta}$, induction with the strict decrease just established gives $\rho(z^{k+1}) < \rho(z^k) < \beta$ for all $k$.
\end{proof}

By \cref{lem:alg well-defined}, each iterate of \cref{alg:inexact Newton} satisfies $\rho(z^{k+1}) < \rho(z^k)$, and therefore choosing any $\beta > \rho(z^0)$ ensures that all iterates remain in the sublevel set $\lev{\rho}{\beta}$. Compactness of this level set, established by \cref{lem:level cpt}, provides bounds that serve two purposes. First, the lower bound $\htaubetamin$ guarantees $\tau$ is strictly positive. \cref{alg:inexact Newton} uses this bound explicitly in \cref{step:provisional} to safeguard the Newton step from driving $\tau$ too close to zero. Second, the upper bounds on $\tau$ and $\norm{y}$ ensure that the Jacobian $DF$ remains invertible with bounded condition number throughout the iteration, so the Newton linear system in \cref{step:newton} is always solvable.

\begin{remark}[Fixed-scale specialization]\label{rem:fixed_scale}
When the scale $\btau>0$ is prescribed (e.g., the simplex-constrained case $\btau=1$, corresponding to KL-divergence minimization over $\Delta_n$), \cref{step:provisional} of \cref{alg:inexact Newton} is redundant and $\tau_k\equiv\btau$ in \cref{step:update_iterates}. The inexact Newton step~\eqref{eq:inexact_newton_step} reduces to the $m$-dimensional system for $\bar F(y):=b-\lam y-\btau Ap(y)$. Convergence follows from standard theory~\cite[Ch.~6, 8]{kelley1995iterative}, and \Cref{lem:DF inv bd,lem:step bd,lem:d bd,lem:Fdz residual,lem:beta Lipschitz bd,lem:alf bd} are not needed.
\end{remark}

\section{Bounds} \label{sec:bounds}

The iteration complexity and the asymptotic rates of convergence of Algorithm \ref{alg:inexact Newton} depend on a collection of bounds for $F$, $DF$, the search directions $d^k$, and the step-size $\alf_k$. These are developed individually in \cref{lem:step bd,lem:d bd,lem:Fdz residual,lem:beta Lipschitz bd,lem:alf bd}.

\paragraph{Standing setup}
Throughout this section, fix $\beta>0$, $\bar\eta\in[0,1)$, and $\eta\in[0,\bar\eta]$; let $\taubetamax$, $\taubetamin$, and $\ymax$ be as in \cref{lem:level cpt}. For $w\in\rr^m\times\rr_+$ and $\eta\in[0,1)$, define
\begin{equation}\label{eq:inexact Newton set}
\cN(w,\eta):=\set{s\in\rr^m\times \rr_+ | \norm{F(w) + DF(w)  s}\le\eta\norm{F(w)}}
\end{equation}
as the set of admissible inexact Newton directions at $w$ with relaxation parameter $\eta$.

\begin{lemma}[Bound on $\rho(z+s)$ along a perturbation]\label{lem:step bd}
For $z=(y,\tau)\in\lev{\rho}{\beta}$ and a generic perturbation
$s=(\Del y,\Del\tau)\in\rr^m\times\rr$ (not necessarily from $\cN$)
with $\norm{s}\le\del$ and $\tau+\Del\tau\ge\half\htaubetamin$,
\[
  \rho(z+\zeta s) \le \rhomax(\beta,\del) := \Fymax(\beta,\del)+\Ftaumax(\beta,\del)
  \quad\forall\,\zeta\in[0,1],
\]
where
\[\begin{aligned}
\Fymax(\beta,\delta)&:=
\taubetamax A_{\max}+\lam\ymax+\norm{b}+
(\lam+A_{\max})\del, \qquad\text{and}
\\
 \Ftaumax(\beta,\delta) &:= 1\!+\!\|c\|_\infty\!+\!A_{\max}(\ymax+\del)
 \\ &\qquad\qquad\ \ \ \  
\!+\!\max\{|\log(\half\htaubetamin)|,\taubetamax+\del\}\!+\!|\log \onorm{q}| .
\end{aligned}\]
\end{lemma}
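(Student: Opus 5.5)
The bound will follow from $\rho(z+\zeta s)\le \norm{F_y(z+\zeta s)}+|F_\tau(z+\zeta s)|$, bounding the two components separately. Throughout, write $z+\zeta s=(y',\tau')$ with $y'=y+\zeta\Del y$ and $\tau'=\tau+\zeta\Del\tau$. Since $z\in\lev{\rho}{\beta}$, \cref{lem:level cpt} gives $\norm{y}\le\ymax$ and $\taubetamin\le\tau\le\taubetamax$. Together with $\norm{s}\le\del$ and $\zeta\in[0,1]$, this yields $\norm{y'}\le\ymax+\del$ and $\tau'\le\taubetamax+\del$. For the lower bound on $\tau'$, note that $\tau'$ is a convex combination of $\tau\ge\taubetamin\ge\htaubetamin>\half\htaubetamin$ and $\tau+\Del\tau\ge\half\htaubetamin$. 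Hence $\tau'\ge\half\htaubetamin>0$, so $z+\zeta s\in\dom\rho$. This is the only place the hypothesis on $\Del\tau$ enters.

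\emph{The $y$-component.} I would write $F_y(z+\zeta s)=F_y(z)-\lam\zeta\Del y-A(\tau'p(y')-\tau p(y))$. Cruder and closer to the stated form is to bound directly:
\[
\norm{b-\lam y'-\tau' Ap(y')}\le\norm{b}+\lam\norm{y}+\lam\del+\tau'\norm{Ap(y')}.
\]
By \eqref{eq:A_max_bound}, $\norm{Ap(y')}\le A_{\max}$, so the last term is at most $(\taubetamax+\del)A_{\max}$. Summing gives exactly $\taubetamax A_{\max}+\lam\ymax+\norm{b}+(\lam+A_{\max})\del=\Fymax(\beta,\del)$.

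\emph{The $\tau$-component.} By the triangle inequality, $|F_\tau|\le 1+|\log\tau'|+|\logexp_q(A\T y'-c)|$. For $\log\tau'$: since $\half\htaubetamin\le\tau'\le\taubetamax+\del$, either $|\log\tau'|\le|\log(\half\htaubetamin)|$ when $\tau'\le1$, or $|\log\tau'|\le\log(\taubetamax+\del)\le\taubetamax+\del$ when $\tau'>1$. This gives the max term. For the log-sum-exp, write $u=A\T y'-c$ with $\inorm{u}\le A_{\max}\norm{y'}+\inorm{c}$. Then $\log\onorm{q}-\inorm{u}\le\logexp_q(u)\le\log\onorm{q}+\inorm{u}$, so
\[
|\logexp_q(u)|\le|\log\onorm{q}|+A_{\max}(\ymax+\del)+\inorm{c}.
\]
Adding these pieces yields $\Ftaumax(\beta,\del)$.

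The main obstacle is keeping $\log\tau'$ controlled, i.e.\ guaranteeing $\tau'$ stays bounded below uniformly in $\zeta$. This is handled by the convexity argument above, which uses $\htaubetamin\le\taubetamin$. The remaining steps are routine triangle-inequality estimates.
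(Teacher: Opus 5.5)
Your proof is correct and follows essentially the same route as the paper. You split $\rho\le\norm{F_y}+|F_\tau|$, use the convex-combination argument to get $\tau+\zeta\Del\tau\ge\half\htaubetamin$, bound $F_y$ directly, and use the same two-case estimate for $|\log\tau'|$. The only difference is cosmetic: you bound $|\logexp_q(A\T y'-c)|$ by the sandwich $\log\onorm{q}\pm\inorm{u}$, whereas the paper uses the $A_{\max}$-Lipschitz property of $y\mapsto\logexp_q(A\T y-c)$ together with $|\logexp_q(-c)|\le\inorm{c}+|\log\onorm{q}|$; both give the identical constant.
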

\begin{proof}
Let $z=(y,\tau)\in\lev{\rho}{\beta}$ and let
$s=(\Del y,\Del \tau)\in\rr^m\times \rr$ be such that
$\norm{s}\le\del$ and $\tau+\Del\tau\ge\half\htaubetamin$.
For $\zeta\in[0,1]$ observe that 
\begin{equation}\label{eq:tau plus Del tau}
\tau+\zeta\Del\tau=(1-\zeta)\tau+\zeta(\tau+\Del\tau)
\ge (1-\zeta)\htaubetamin+\zeta \half\htaubetamin\ge \half\htaubetamin.
\end{equation}
Recall $p(y)\in\rint\Del_n$ from \cref{thm:primal_dual_duality}\cref{item:uniqueness}. Then
\[\begin{aligned}
F_y(z+\zeta s) &= b - (\tau+\zeta \Del \tau)Ap(y+\zeta \Del y) - \lambda(y+\zeta \Del y),
\\ &=
[\tau Ap(y+\zeta \Del y)-\lam y +b] -(\zeta Ap(y+\zeta \Del y))\Del \tau -(\lam\zeta)\Del y,
\\
F_\tau(z+\zeta s)&=-\logexp_{q/(\tau+\zeta \Del \tau)}(A\T (y+\zeta \Del y)-c)+1.
\end{aligned}\]
Therefore,  
\[
\begin{aligned}
\|F_y(z+\zeta s)\|&\le
\tau A_{\max}+\lam \norm{y}+\norm{b}+\zeta A_{\max}|\Del \tau|+\lam \zeta \norm{\Del y}
\\ &\le
\taubetamax A_{\max}+\lam\ymax+\norm{b}+ \zeta
\bip{\begin{pmatrix}\lam\\ A_{\max}\end{pmatrix}}{\begin{pmatrix}\norm{\Del y}\\ |\Del \tau|\end{pmatrix}}
\\ &\le
\taubetamax A_{\max}+\lam\ymax+\norm{b}+\zeta(\lam+A_{\max})\inorm{s}
\\ &\le
\taubetamax A_{\max}+\lam\ymax+\norm{b}+
(\lam+A_{\max})\del %
\equiv \Fymax(\beta,\del),
\end{aligned}
\]
where $\ymax$ is defined in \cref{lem:level cpt}. To bound $|F_\tau|$, define $h(y):=\logexp_q(A\T y-c)$. By \cref{lem:logexp_properties}\cref{item:logexp_composed}, the map $h$ is $A_{\max}$-Lipschitz. Hence,
\[\begin{aligned}
|F_\tau(z+\zeta s)|
&=
\left|-h(y+\zeta\Del y)+\log(\tau+\zeta\Del\tau)+1\right|
\\
&\le
1+\left|h(y+\zeta\Del y)-h(0)\right|+\left|h(0)\right|+\left|\log(\tau+\zeta\Del\tau)\right|
\\
&\le
1+A_{\max}\norm{y+\zeta\Del y}+\left|\logexp_q(-c)\right|+\left|\log(\tau+\zeta\Del\tau)\right|.
\end{aligned}\]
If $\tau+\zeta \Del \tau\ge1$, then 
$|\log(\tau+\zeta \Del \tau)|
=\log(\tau+\zeta \Del \tau)\le\tau + \zeta \Del \tau-1\le\tau+\zeta \Del \tau$, 
where the first inequality follows from the linearization of $\log$ at 1. 
If, on the other hand, $\tau+\zeta \Del \tau<1$, then 
$|\log(\tau+\zeta \Del \tau)|=-\log(\tau+\zeta \Del \tau)\le-\log(\half\htaubetamin)$ by monotonicity of the negative $\log$ and \eqref{eq:tau plus Del tau}. 
Combining these observations, we find 
$|\log(\tau+\zeta \Del \tau)|\le\max\{|\log(\half\htaubetamin)|,\taubetamax+\del\}$.
Therefore, 
\[\begin{aligned}
|F_\tau(z+\zeta s)|
&\le
1+A_{\max}(\ymax+\del)+\left|\logexp_q(-c)\right|
+\max\{|\log(\half\htaubetamin)|,\taubetamax+\del\} \\
&\le 1\!+\!\|c\|_\infty\!+\!A_{\max}(\ymax\!+\!\del)
\!+\!\max\{|\log(\half\htaubetamin)|,\taubetamax\!+\!\del\}\!+\!|\log \onorm{q}\! |,
\end{aligned}\]
and the RHS is equivalent to $\Ftaumax(\beta,\del)$.
Hence, $\rho(z+\zeta s)=\norm{F(z+\zeta s)}\le
\Fymax(\beta,\del)+\Ftaumax(\beta,\del)$.
\end{proof}

\begin{lemma}[Inexact Newton direction bounds]\label{lem:d bd}
For $z=(y,\tau)\in\lev{\rho}{\beta}$,
\[\begin{alignedat}{2}
  \tdmax(z,\eta)&:=\sup\rset{\norm{d}}{d\in\cN(z,\eta)}
  &\;\le\;& \max\{\lam^{-1},\tau\}(1+\eta)\rho(z),\ \text{and}
  \\
  \dmax(\beta,\eta)&:=
  \sup\rset{\tdmax(z,\eta)}{z\in\lev{\rho}{\beta}}
  &\;\le\;& \max\{\lam^{-1},\taubetamax\}(1+\eta)\beta.
\end{alignedat}\]
\end{lemma}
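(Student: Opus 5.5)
The plan is to write any admissible direction as an exact Newton solve against a small residual, then use the pointwise inverse bound from \cref{lem:DF inv bd}\ref{item:singular-bounds}.

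Fix $z=(y,\tau)\in\lev{\rho}{\beta}$ and $d\in\cN(z,\eta)$. Set the residual $r:=F(z)+DF(z)d$, so that $\norm{r}\le\eta\norm{F(z)}=\eta\rho(z)$ by the definition~\eqref{eq:inexact Newton set}. By \cref{thm:primal_dual_duality}\ref{item:DF-invertible}, $DF(z)$ is invertible, which gives the identity
\[
d = DF(z)^{-1}\bigl(r - F(z)\bigr).
\]
From this, $\norm{d}\le\norm{DF(z)^{-1}}\bigl(\norm{r}+\norm{F(z)}\bigr)\le\norm{DF(z)^{-1}}(1+\eta)\rho(z)$. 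The pointwise estimate $\norm{DF(z)^{-1}}\le\max\{\lam^{-1},\tau\}$ from \cref{lem:DF inv bd}\ref{item:singular-bounds} then yields the first bound. Since $d$ is arbitrary in $\cN(z,\eta)$, the bound passes to the supremum $\tdmax(z,\eta)$. If $\cN(z,\eta)$ happens to be empty because of the sign restriction on the $\tau$-component, the supremum is $-\infty$ (or taken as $0$) and the bound holds trivially.

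For the uniform bound, take the supremum over $z\in\lev{\rho}{\beta}$. On the level set, $\rho(z)\le\beta$, and $\tau\le\taubetamax$ by \cref{lem:level cpt}\ref{item:taubetamax}. Because $t\mapsto\max\{\lam^{-1},t\}$ is nondecreasing, this gives $\max\{\lam^{-1},\tau\}(1+\eta)\rho(z)\le\max\{\lam^{-1},\taubetamax\}(1+\eta)\beta$, which is the second bound.

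There is no substantive obstacle. The only care needed is to use the pointwise inverse bound of \cref{lem:DF inv bd}\ref{item:singular-bounds} rather than the strip constant $\mathM(2\taubetamax)$, which would give a looser constant than the one stated.
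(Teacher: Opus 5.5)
Your proposal is correct and follows essentially the same argument as the paper: both write $d=DF(z)^{-1}\bigl(F(z)+DF(z)d-F(z)\bigr)$, bound the bracket by $(1+\eta)\rho(z)$, and apply the pointwise estimate $\norm{DF(z)^{-1}}\le\max\{\lam^{-1},\tau\}$ from \cref{lem:DF inv bd}\ref{item:singular-bounds}. The uniform bound then follows in both from $\tau\le\taubetamax$ and $\rho(z)\le\beta$ on $\lev{\rho}{\beta}$; your remark about a possibly empty $\cN(z,\eta)$ is a harmless extra that the paper omits.
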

\begin{proof}
For $d\in\cN(z,\eta)$, we have
\[ \begin{aligned}
\norm{d}&=\norm{DF(z)^{-1}DF(z)d}
\\ &\le \norm{DF(z)^{-1}}\cdot\norm{F(z)+DF(z)d-F(z)}
\\ &\le \max\{\lam^{-1},\tau\}(1+\eta)\norm{F(z)}
\le
\max\{\lam^{-1},\taubetamax\}(1+\eta)\beta,
\end{aligned}
\]
where the second inequality uses Lemma \ref{lem:DF inv bd}\ref{item:singular-bounds}. 
\end{proof}

\begin{lemma}[Newton step residual bound]\label{lem:Fdz residual}
Let $\zeta\in [0,1]$, $z=(y,\tau)\in\lev{\rho}{\beta}$ and
$d=(\Del y,\Del \tau)\in \cN(z,\eta)$.
Set $\hat\del:=\dmax(\beta, \eta)$ and
\[
\bar\alpha \coloneqq
\begin{cases}
    1 &\text{if}\ \tau+\Del \tau\ge\half\htaubetamin,\\
   (\tau - \half\htaubetamin)/({-\Del \tau}) & \text{otherwise,}
\end{cases}
\]
Define the uniform lower bounds
\begin{equation}\label{eq:balf and hat alf}
\hat\alf\coloneqq
\begin{cases}
    1 &\text{if}\ \tau+\Del \tau\ge\half\htaubetamin,\\
   \htaubetamin/(2 \hdel) & \text{otherwise,}
\end{cases}
\qquad
\hat\eta \coloneqq 1-(1-\bar\eta)\hat\alf< 1.
\end{equation}
Then $\hat\alf\le\balf\le 1$, $\bar\eta\le\hat\eta$, and
\begin{align}
\label{eq:hat eta inexact Newton}
\norm{F(z)+\balf DF(z)d}
&\le[1-(1-\eta)\balf]\cdot\norm{F(z)}\le\hat\eta\norm{F(z)}\quad\text{and}
\\
\label{eq:updated residual bound}
  \norm{F(z+\zeta \bar\alpha d)}
  &\le \rhomax(\beta,\hat\del)\quad\forall\, \zeta\in[0,1].
\end{align}
\end{lemma}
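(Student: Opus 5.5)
The plan is to prove the three claims in order: first the bounds $\hat\alf\le\balf\le1$ and $\bar\eta\le\hat\eta$, then the residual bound \eqref{eq:hat eta inexact Newton} by convexity of the norm, and finally \eqref{eq:updated residual bound} by reduction to \cref{lem:step bd}.

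\emph{Step-length bounds.} If $\tau+\Del\tau\ge\half\htaubetamin$, then $\balf=\hat\alf=1$. Otherwise $\Del\tau<\half\htaubetamin-\tau<0$, since $\tau\ge\taubetamin\ge\htaubetamin$ for $z\in\lev{\rho}{\beta}$ by \cref{lem:level cpt}. Hence $\balf=(\tau-\half\htaubetamin)/(-\Del\tau)$ has positive numerator and denominator. The defining inequality $-\Del\tau>\tau-\half\htaubetamin$ gives $\balf<1$. For the lower bound:
\begin{itemize}
\item the numerator satisfies $\tau-\half\htaubetamin\ge\half\htaubetamin$;
\item the denominator satisfies $-\Del\tau\le\norm{d}\le\tdmax(z,\eta)\le\dmax(\beta,\eta)=\hdel$, by \cref{lem:d bd} and $\eta\le\bar\eta$.
\end{itemize}
Together these give $\balf\ge\htaubetamin/(2\hdel)=\hat\alf$. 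Since $\hat\alf\le1$, $\hat\eta=1-(1-\bar\eta)\hat\alf\ge\bar\eta$, and $\hat\eta<1$ because $\hat\alf>0$.

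\emph{Residual bound \eqref{eq:hat eta inexact Newton}.} Write
\[
F(z)+\balf DF(z)d=(1-\balf)F(z)+\balf\,[F(z)+DF(z)d].
\]
The triangle inequality and $d\in\cN(z,\eta)$ then give
\[
\norm{F(z)+\balf DF(z)d}\le(1-\balf)\norm{F(z)}+\balf\eta\norm{F(z)}=[1-(1-\eta)\balf]\norm{F(z)}.
\]
Using $\eta\le\bar\eta$ and $\balf\ge\hat\alf$, we get $1-(1-\eta)\balf\le1-(1-\bar\eta)\hat\alf=\hat\eta$.

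\emph{Bound \eqref{eq:updated residual bound}.} Apply \cref{lem:step bd} with $s:=\balf d$ and $\del:=\hdel$. Its two hypotheses are checked as follows.
\begin{itemize}
\item $\norm{s}\le\balf\norm{d}\le\hdel$, by the bound on $\norm{d}$ above.
\item $\tau+\balf\Del\tau\ge\half\htaubetamin$. In the first case $\balf=1$ and this is the case hypothesis. In the second case equality holds by construction of $\balf$.
\end{itemize}
\cref{lem:step bd} then yields $\rho(z+\zeta\balf d)\le\rhomax(\beta,\hdel)$ for all $\zeta\in[0,1]$.

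The main obstacle is the uniform lower bound on $\balf$. It needs $\tau\ge\htaubetamin$ on the level set, so that the numerator is at least $\half\htaubetamin$. It also needs $|\Del\tau|$ to be controlled by $\dmax(\beta,\eta)$ rather than by $\dmax(\beta,\bar\eta)$; this uses the monotonicity of $\dmax$ in $\eta$, or equivalently the inclusion $\cN(z,\eta)\subset\cN(z,\bar\eta)$.
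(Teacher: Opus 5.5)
Your proof is correct and follows essentially the same route as the paper: the case split on whether $\tau+\Del\tau\ge\half\htaubetamin$, the lower bound $\balf\ge(\half\htaubetamin)/\hdel$ from $\tau\ge\htaubetamin$ and $|\Del\tau|\le\norm{d}\le\hdel$, the convex-combination estimate for $\norm{F(z)+\balf DF(z)d}$, and an appeal to \cref{lem:step bd}. In that last step you apply \cref{lem:step bd} with $s=\balf d$, so its hypothesis $\tau+\Del\tau\ge\half\htaubetamin$ holds (with equality) in the truncated case; this makes precise what the paper justifies only with the brief remark that $\zeta\balf\in[0,1)$.
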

\begin{proof}
Let $\zeta\in[0,1]$. If $\tau+\Del \tau\ge\half\htaubetamin$, then
$\bar\alf=1=\hat \alf$ and $\hat\eta=\bar\eta$. In this case,
\eqref{eq:hat eta inexact Newton} follows from  $d\in\cN(z,\eta)$
while \eqref{eq:updated residual bound}
follow from \cref{lem:step bd}.
If $\tau+\Del\tau<\half \htaubetamin$, then $\Del\tau<0$ since 
$\tau\ge\taubetamin>\half \htaubetamin$. Therefore, 
$0<\hat\alf=(\tau-\half \htaubetamin)/(-\Del\tau)<1$. Moreover,
$0<\hat\alf=\htaubetamin/(2\hdel)<(\tau-\half \htaubetamin)/\hdel
\le (\tau-\half \htaubetamin)/(-\Del\tau)=\bar\alf<1$
since $(-\Del\tau)=|\Del\tau|\le \norm{d}$. Consequently, 
$0<\bar\eta<\hat\eta$ and 
\begin{equation*}\label{eq:rho cvx comp}
\begin{aligned}
\norm{F(z)+\balf DF(z)d}
&\le (1-\balf)\norm{F(z)}+\balf \norm{F(z)+DF(z)d}
\\ 
& \le (1-\balf + \balf\eta)\norm{F(z)}
\\
&= [1-(1-\eta)\balf]\norm{F(z)}
\\ & \le\hat\eta\norm{F(z)},
\end{aligned}
\end{equation*}
that is, \eqref{eq:hat eta inexact Newton} holds.
The inequality \eqref{eq:updated residual bound} also holds
since $\zeta\balf\in[0,1)$.
\end{proof}

\begin{lemma}[Lipschitz bound for $DF$]\label{lem:beta Lipschitz bd}
Let $z = (y, \tau) \in \rr^m \times [\tau_l, \tau_u]$
with $0 < \tau_l \le \tau_u$.
The Jacobian $DF$ is Lipschitz continuous on $ \rr^m \times [\tau_l, \tau_u]$
 with Lipschitz constant $\tL(\tau_l, \tau_u)$ satisfying %
\begin{equation}\label{eq:lipschitz_constant}
\tL(\tau_l, \tau_u)\le 3\tau_u\norm{A}^3 + \tfrac{3}{2}\norm{A}^2 + 1/\tau_l^2.
\end{equation}
In particular, this implies that, for $\hat\beta>0$, $DF$ is Lipschitz continuous on
the convex set
$\rr^m \times [\half \tau_{\hat\beta{min}}, 2\tau_{\hat\beta{max}}]$
with
a Lipschitz constant $\LD(\hat\beta)$ satisfying %
\begin{equation}\label{eq:beta Lipschitz constant}
\LD(\hat\beta)\le 6 \tau_{\hat\beta{max}}\norm{A}^3 + \tfrac{3}{2}\norm{A}^2 +
 4/\tau_{\hat\beta{min}}^2,
\end{equation}
where
$\lev{\rho}{\hat\beta}\subset\interior(\rr^m \times [\half \tau_{\hat\beta{min}}, 2\tau_{\hat\beta{max}}])$.
\end{lemma}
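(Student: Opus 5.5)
We bound $\norm{DF(z)-DF(z')}$ block by block, using the structure in~\eqref{eq:dual_hess}. Write $z=(y,\tau)$ and $z'=(y',\tau')$ in the strip $\rr^m\times[\tau_l,\tau_u]$. We use
\[
DF(z)-DF(z')=\begin{bmatrix} H(z)-H(z') & g(y)-g(y')\\ (g(y)-g(y'))\T & 1/\tau-1/\tau'\end{bmatrix}.
\]
The operator norm of a symmetric $2\times2$ block matrix is at most the sum of the norms of its diagonal part and its off-diagonal part. The off-diagonal part has norm $\norm{g(y)-g(y')}$, as in~\eqref{eq:DF decomposition}. So it suffices to Lipschitz-bound three pieces separately: $H$, $g$, and $1/\tau$.

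\textbf{The scalar and vector pieces.} By the mean value theorem, $|1/\tau-1/\tau'|\le \tau_l^{-2}|\tau-\tau'|$. For the vector piece, $g(y)=-Ap(y)$, and $p$ is $\half\norm{A}$-Lipschitz by \cref{lem:logexp_properties}\cref{item:logexp_composed}. Hence $\norm{g(y)-g(y')}\le\half\norm{A}^2\norm{y-y'}$.

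\textbf{The matrix piece.} Split
\[
\tau ASA\T-\tau'AS'A\T=(\tau-\tau')AS(y)A\T+\tau' A\bigl(S(y)-S(y')\bigr)A\T .
\]
The first term is at most $\half\norm{A}^2|\tau-\tau'|$, since $\norm{S}\le\half$. For the second term we need a Lipschitz constant for $S(y)=\Diag{p}-pp\T$. We have $\norm{\Diag{p}-\Diag{p'}}\le\norm{p-p'}_\infty\le\norm{p-p'}$. For the outer products, write $pp\T-p'p'\T=(p-p')p\T+p'(p-p')\T$; since $\norm{p}\le1$ on the simplex, this gives a norm of at most $2\norm{p-p'}$. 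Altogether $\norm{S(y)-S(y')}\le3\norm{p-p'}\le\tfrac32\norm{A}\norm{y-y'}$. The second term is therefore at most $\tfrac32\tau_u\norm{A}^3\norm{y-y'}$.

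\textbf{Assembling.} Summing the pieces,
\[
\norm{DF(z)-DF(z')}\le\Bigl(\tfrac32\tau_u\norm{A}^3+\half\norm{A}^2\Bigr)\norm{y-y'}+\Bigl(\half\norm{A}^2+\tau_l^{-2}\Bigr)|\tau-\tau'|+\half\norm{A}^2\norm{y-y'}.
\]
Bounding $\norm{y-y'}$ and $|\tau-\tau'|$ each by $\norm{z-z'}$ gives at most $\tfrac32\tau_u\norm{A}^3+\tfrac32\norm{A}^2+\tau_l^{-2}$. This is within~\eqref{eq:lipschitz_constant}; the stated factor $3$ leaves slack, for instance if one uses a cruder bound such as $\norm{S(y)-S(y')}\le 3\norm{p-p'}$ with $\norm{p-p'}\le\norm{A}\norm{y-y'}$.

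\textbf{The level-set specialization.} Substitute $\tau_l=\half\tau_{\hat\beta\min}$ and $\tau_u=2\tau_{\hat\beta\max}$. Then $3\tau_u\norm{A}^3$ becomes $6\tau_{\hat\beta\max}\norm{A}^3$ and $1/\tau_l^2$ becomes $4/\tau_{\hat\beta\min}^2$, which yields~\eqref{eq:beta Lipschitz constant}. The strip is convex, so the pointwise Lipschitz estimate holds globally on it. The containment of $\lev{\rho}{\hat\beta}$ in the interior of the strip is \cref{lem:DF inv bd}\cref{item:level-set-bounds}.

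The main obstacle is the Lipschitz bound on $S(y)$, the softmax covariance. It needs a clean operator-norm estimate for $pp\T$ that uses $\norm{p}\le1$ on the simplex. The rest is bookkeeping of block norms.
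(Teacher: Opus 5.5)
Your proposal is correct and follows essentially the same route as the paper. Both arguments use the splitting $DF=M_1+M_2$, the same three block estimates for $H$, $g$, and $1/\tau$, and the bound $\norm{S(y)-S(y')}\le 3\norm{p-p'}$ from the diagonal-plus-outer-product decomposition. The differences are cosmetic: you use the sharper $\half\norm{A}$-Lipschitz constant of $p$, whereas the paper uses the cruder $\norm{A}$, which is where its factor $3\tau_u\norm{A}^3$ comes from. Your assembly line also counts the $g$-block term $\half\norm{A}^2\norm{y-y'}$ twice, but this is a harmless overcount that still lands inside~\eqref{eq:lipschitz_constant}.
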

\begin{proof}
We use the notation for $DF$ developed in the proof of \cref{lem:DF inv bd}.
Let $z^i = (y^i, \tau_i)$ for $i = 1, 2$, and write
$\Delta y = y^1 - y^2$, $\Delta\tau = \tau_1 - \tau_2$, and 
$p^i = \dlogexp(A\T y^i - c)$. We bound the Lipschitz constants for each block of $DF$ as defined in~\eqref{eq:dual_hess}.

\noindent
\emph{Lipschitz bound on $S(y)$.}
\Cref{lem:logexp_properties}\ref{item:logexp_smooth} gives $\dlogexp$ is (1/2)-Lipschitz.
Composing with $A\T$ gives $\norm{p^1 - p^2} \le \norm{A}\norm{\Delta y}$. Expanding 
$S(y^1) - S(y^2) = \Diag{p^1 - p^2} - p^1(p^1 - p^2)\T  - (p^1 - p^2)(p^2)\T $ and applying the triangle inequality with $\norm{p_i} \le 1$ yields
\begin{equation}\label{eq:S_lipschitz}
\norm{S(y^1) - S(y^2)} \le 3\norm{p^1 - p^2} \le 3\norm{A}\norm{\Delta y}.
\end{equation}

\noindent
\emph{Block $H$ (top-left, $m \times m$).}
Adding and subtracting $\tau_1 A S(y^2) A\T $ gives
\[
\Delta H := H_1 - H_2 = -\tau_1 A(S(y^1) - S(y^2))A\T  - (\tau_1 - \tau_2)A S(y^2)A\T .
\]
Applying \eqref{eq:S_lipschitz} and Lemma
\ref{lem:logexp_properties}\ref{item:logexp_smooth}:
\begin{equation}\label{eq:H_lipschitz}
\norm{\Delta H} \le 3\tau_{u}\norm{A}^3\norm{\Delta y} + \tfrac{1}{2}\norm{A}^2|\Delta\tau|.
\end{equation}

\noindent
\emph{Block $g$ (off-diagonal, $m \times 1$).}
Since $g = -Ap$, 
we have
\begin{equation*}\label{eq:g_lipschitz}
\norm{\Delta g} = \norm{A(p^1 - p^2)} \le \norm{A}^2\norm{\Delta y},
\end{equation*}
where $\Delta g := g^1 - g^2$.

\noindent
\emph{Block $1/\tau$ (bottom-right, scalar).}
Direct computation yields
\begin{equation}\label{eq:tau_lipschitz}
\left|\frac{1}{\tau_1} - \frac{1}{\tau_2}\right| = \frac{|\Delta\tau|}{\tau_1\tau_2} \le \frac{|\Delta\tau|}{\tau_{l}^2}.
\end{equation}

\noindent
\emph{Assembly.}
Using the decomposition $DF = M_1 + M_2$ from \eqref{eq:DF decomposition},
write $DF(z_1) - DF(z_2) = \Delta M_1 + \Delta M_2$ where $\Delta M_1$ is block-diagonal with blocks $\Delta H$ and $\Delta(1/\tau)$, and $\Delta M_2$ is the off-diagonal part with $\norm{\Delta M_2} = \norm{\Delta g}$. By the triangle inequality and~\eqref{eq:H_lipschitz}--\eqref{eq:tau_lipschitz}:
\[
\norm{DF(z_1) - DF(z_2)} \le 
{(3\tau_{u}\norm{A}^3 + \norm{A}^2)}\cdot\norm{\Delta y} + 
{\bigl(\tfrac{1}{2}\norm{A}^2 + 1/\tau_{l}^2\bigr)}|\Delta\tau|.
\]
This yields~\eqref{eq:lipschitz_constant} via the Cauchy--Schwarz inequality and the
inequality $\norm{z}\le\onorm{z}$.
\end{proof}

\begin{lemma}[Step-size bound]\label{lem:alf bd}
Let $\hM(\beta):=\mathM(2\taubetamax)$ where $\mathM(\cdot)$ is as in \cref{lem:DF inv bd}.
If $\rho(z)>0$ and
$\tilde \alf\in (0,1]$ is such that
$z+\tilde\alf d\in \lev{\rho}{\hat\beta}$, with $\hbeta>\beta$, and
\begin{equation}\label{eq:alf bbd below1}
\rho(z)+\mu \tilde\alf (\eta-1)\rho(z)<\rho(z+\tilde\alf d),
\end{equation}
then
\begin{equation}\label{eq:alf bound sharp}
1\ge\tilde\alf>\frac{2(1-\mu)(1-\eta)}{\hLD(\hat\beta) \hM(\beta)^2(1+\eta)^2\norm{F(z)}}.
\end{equation}
Since $\eta\le\bar\eta$ implies $2/(1+\eta)^2\ge 1/2$, and $\norm{F(z)}\le\beta$,
\begin{equation}\label{eq:alf bound}
\tilde\alf>
\frac{(1-\mu)(1-\bar\eta)}{2\hLD(\hat\beta) \hM(\beta)^2\beta} > \alf^*,
\quad\text{where}\quad
\alf^*:=(1-\mu)(1-\bar\eta)\bigl(1+2\hLD(\hat\beta)\hM(\beta)^2\beta\bigr)^{-1}.
\end{equation}
\end{lemma}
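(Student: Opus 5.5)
The plan is the standard descent-lemma argument for backtracking, adapted to the nonsmooth merit $\rho=\norm{F}$. Throughout, $z=(y,\tau)\in\lev{\rho}{\beta}$ and $d\in\cN(z,\eta)$ is the direction produced in \cref{step:newton}. The segment $[z,z+\tilde\alf d]$ should lie in the strip $\rr^m\times[\half\tau_{\hat\beta\min},2\tau_{\hat\beta\max}]$. For $\tau$ this holds because $z\in\lev{\rho}{\beta}\subset\lev{\rho}{\hat\beta}$ and $z+\tilde\alf d\in\lev{\rho}{\hat\beta}$, both lie in that interval by \cref{lem:level cpt}, and the strip is convex. On the strip, \cref{lem:beta Lipschitz bd} gives that $DF$ is $\hLD(\hat\beta)$-Lipschitz; I read $\hLD$ as this constant $\LD(\hat\beta)$ or an upper estimate of it.

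The first step is the integral form of Taylor's theorem:
\[
F(z+\tilde\alf d)=F(z)+\tilde\alf DF(z)d+\int_0^1\bigl[DF(z+t\tilde\alf d)-DF(z)\bigr]\tilde\alf d\,dt .
\]
This gives $\norm{F(z+\tilde\alf d)}\le\norm{F(z)+\tilde\alf DF(z)d}+\tfrac12\hLD(\hat\beta)\tilde\alf^2\norm{d}^2$.

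Next, convexity of the norm along $t\mapsto F(z)+tDF(z)d$, as in the proof of \cref{lem:Fdz residual}, gives $\norm{F(z)+\tilde\alf DF(z)d}\le[1-(1-\eta)\tilde\alf]\rho(z)$ for $\tilde\alf\in(0,1]$.

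To bound $\norm{d}$, I would use the argument of \cref{lem:d bd}: $\norm{d}\le\norm{DF(z)^{-1}}(1+\eta)\rho(z)$. Then $\norm{DF(z)^{-1}}\le\mathM(2\taubetamax)=\hM(\beta)$ by \cref{lem:DF inv bd}\ref{item:level-set-bounds}, since $\tau\le\taubetamax$.

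Combining these with the failed Armijo test \eqref{eq:alf bbd below1} gives
\[
\rho(z)-\mu\tilde\alf(1-\eta)\rho(z)<\rho(z)-(1-\eta)\tilde\alf\rho(z)+\tfrac12\hLD\tilde\alf^2\hM^2(1+\eta)^2\rho(z)^2 .
\]
Cancelling $\rho(z)$, and dividing by $\tilde\alf\rho(z)>0$, yields $(1-\mu)(1-\eta)<\tfrac12\hLD\hM^2(1+\eta)^2\tilde\alf\rho(z)$. This rearranges to \eqref{eq:alf bound sharp}, and $\tilde\alf\le1$ holds by hypothesis.

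For \eqref{eq:alf bound}, use $(1+\eta)^2\le4$, so that $2/(1+\eta)^2\ge\half$. Also use $1-\eta\ge1-\bar\eta$ and $\norm{F(z)}\le\beta$. The final comparison with $\alf^*$ is elementary, since $2\hLD\hM^2\beta<1+2\hLD\hM^2\beta$. The positivity $\hLD,\hM>0$ should be checked, for instance $\hM\ge\lam^{-1}>0$.

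The main obstacle I expect is justifying that the whole segment stays in the region where the Lipschitz bound for $DF$ applies. Only the endpoints are assumed to lie in $\lev{\rho}{\hat\beta}$, and this is handled by the convexity of the $\tau$-strip as above. A second point is that the bound $\norm{DF^{-1}}\le\hM(\beta)$ is needed only at $z$ itself, not along the segment.
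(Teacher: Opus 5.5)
Your proposal is correct and follows essentially the same argument as the paper. The shared steps are convexity of the norm for the linearized residual, the quadratic bound (your integral Taylor estimate) with the Lipschitz constant of $DF$ from \cref{lem:beta Lipschitz bd}, the direction bound $\norm{d}\le\hM(\beta)(1+\eta)\rho(z)$ as in \cref{lem:d bd}, and rearrangement of the failed Armijo inequality. Your explicit check that the segment $[z,z+\tilde\alf d]$ stays in the $\tau$-strip by convexity is a step the paper leaves implicit, and it is the right justification.
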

\begin{proof}
The convexity of the norm and definition of $d$ tells us that
\begin{equation}\label{eq:cvx norm}
\norm{F(z)+\talf DF(z)d}-\norm{F(z)}\le 
\talf\bigl(\norm{F(z)+ DF(z)d}-\norm{F(z)}\bigl)
\le \talf(\eta-1)\norm{F(z)}.
\end{equation}
By \cref{lem:d bd,lem:beta Lipschitz bd},
\eqref{eq:cvx norm}, and the quadratic bound lemma \cite[Theorem 3.2.12]{ortega1970itersols},
we have
\[\begin{aligned}
\norm{F(z+\talf d)}-\norm{F(z)}
&=
\left(\norm{F(z)+\talf DF(z)d}-\norm{F(z)}\right)
\\ &\qquad\qquad +
\left(\norm{F(z+\talf d)}-\norm{F(z)+\talf DF(z)d}\right)
\\ &\le
\talf(\eta-1)\norm{F(z)}+\norm{F(z+\talf d)-[F(z)+\talf DF(z)d]}
\\ &\le
\talf(\eta-1)\norm{F(z)}+\half\hLD(\hat\beta){\talf}^2\norm{d}^2
\\ &\le 
\talf(\eta-1)\norm{F(z)}+
\half\hLD(\hat\beta){\talf}^2\hM(\beta)^2(1+\eta)^2\norm{F(z)}^2.
\end{aligned}\]
Hence, by~\eqref{eq:alf bbd below1},
\[\begin{aligned}
\mu \talf (\eta-1)\rho(z)&<\rho(z+\talf d)-\rho(z)
 \\ &\le
 \talf(\eta-1)\rho(z)+\half\hLD(\hat\beta){\talf}^2\hM(\beta)^2(1+\eta)^2\rho(z)^2.
\end{aligned}\]
Dividing by $\talf\rho(z)$ tells us that
\[
\mu(\eta-1)<(\eta -1)+\half\hLD(\hat\beta){\talf}\hM(\beta)^2(1+\eta)^2\rho(z).
\]
By rearranging terms, we obtain
\[
1\ge\talf>\frac{2(1-\mu)(1-\eta)}{\hLD(\hat\beta) \hM(\beta)^2(1+\eta)^2\rho(z)}
 \ge
 \frac{(1-\mu)(1-\bar\eta)}{2\hLD(\hat\beta) \hM(\beta)^2\rho(z)}
 >
 \frac{(1-\mu)(1-\bar\eta)}{2\hLD(\hat\beta) \hM(\beta)^2\beta}
\]
since $2/(1+\bar \eta)^2>\half$. Hence \eqref{eq:alf bound sharp} and \eqref{eq:alf bound} follow.
\end{proof}

\section{Convergence} \label{sec:convergence}

This section establishes the main convergence result for \cref{alg:inexact Newton}. \Cref{thm:convergence of inexact Newton 1} describes (1) the global linear convergence of the merit $\rho$ at an explicit rate $\hat\nu<1$; (2) the iteration complexity needed to reduce $\rho(z^k)$ or $\norm{z^{k+1}-z^*}$ below $\eps$; and (3) superlinear (resp.\ $(1+p)$-superlinear) local rates under standard forcing-sequence conditions on $\{\eta_k\}$. The argument relies on the master bounds of \cref{lem:step bd,lem:d bd,lem:Fdz residual,lem:beta Lipschitz bd,lem:alf bd} and the Jacobian bounds of \cref{lem:DF inv bd}. Together these control the step size through the three-case analysis that follows.

\begin{theorem}[Iteration complexity and asymptotic convergence rates]
\label{thm:convergence of inexact Newton 1}
Let $x^*\in \rr^n_{++}$ and $z^*:=(y^*,\tau^*)\in \rr^m\times\rr_{++}$
be the unique primal-dual solutions to the primal-dual pair of problems given in \cref{thm:primal_dual_duality}. Let $\{z^k=(y^k,\tau_k)\}_{k\ge 0}$ be the sequence generated by Algorithm
\ref{alg:inexact Newton} with initialization $z^0 \in\rr^m\times\rr_{++}$, $\eps\ge0$, $\mu,\gam\in(0,1)$, $\bar\eta\in[0,1)$, $\{\eta_k\}_{k\ge 0}\subset [0,\bar \eta]$, and $\beta>\rho(z^0)=\beta_0$. Define the corresponding primal sequence
by $x^k := \tau_k p(y^k)$.
Then
\begin{equation}\label{eq:linear rate}
\rho(z^{k+1})\le \hat\nu\,\rho(z^k)\qquad\forall\, k\ge 0,
\end{equation}
where $\hat\nu:=1-\mu{\hat\alf}^*(1-{\hat\eta})\in(0,1)$
with ${\hat\alf}^*=\min\{\hat\alf,\gam\alf^*\}$, and $\hat\eta$, $\hat\alf$, $\alf^*$ are given in~\eqref{eq:balf and hat alf} and~\eqref{eq:alf bound}.
Moreover, the following convergence properties hold.
\begin{thmenum}[label=(\arabic*),ref=(\arabic*)]
\item \label{item:complexity}
Iteration Complexity:
If $\eps\in (0,\beta_0)$, then
\[
\rho(z^k)\le\eps\quad  \text{for all}\quad k\ge \log(\beta_0\eps^{-1})/\log(\hat\nu^{-1})=O(\log(\eps^{-1})),
\]
and
\[
\norm{z^{k+1}-z^*}\le\eps\quad  \text{for all}\quad
k\ge \log(K\eps^{-1})/\log(\hat\nu^{-1})=O(\log(\eps^{-1})),
\]
where $K:=\max\{\lam^{-1},\, \taubetamax\}(1+\bar\eta)\beta_0(1-\hat\nu)^{-1}$.
\item \label{item:asymptotic}
Asymptotic rates: Let $\eps=0$. Then $z^k\rightarrow z^*$, $\alf_k=1$ for all $k$ 
sufficiently 
large, and we have the following
asymptotic rates of convergence.
 \begin{thmenum}[label=(\roman*),ref=(\roman*)]
        \item \label{item:super}
        (Superlinear)  If $\eta_k \to 0$, then
        \[
        \lim_{k \to \infty} \frac{\|z^{k+1} - z^\ast \|}{\|z^{k} - z^\ast \|} = 0.
        \] 
        \item \label{item:super-p}
        ((1+p)-Superlinear) If eventually $\eta_k = O(\|F(z^k)\|^p)$, then 
        for all $k$ sufficiently large, 
        \[
        \|z^{k+1} - z^\ast \|  = O(\|z^{k} - z^\ast \|^{1+p}).
        \]
    \end{thmenum}
\item \label{item:primal convergence}
(Primal iterates) The primal iterates $x^k$ satisfy
\[
    \norm{x^*-x^k}\leq (\half\tau^*\norm{A}+1)\norm{z^*-z^k};
\]
hence $\{x^k\}$ inherits the iteration complexity and asymptotic rates of $\{z^k\}$.
\end{thmenum}
\end{theorem}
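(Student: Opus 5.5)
The plan is to prove the linear rate \eqref{eq:linear rate} first, derive the complexity bounds from it, and then turn to the local analysis. Fix $k$. By \cref{lem:alg well-defined}, $z^k\in\lev{\rho}{\beta}$ and $\alpha_k=\bar\alpha_k\gamma^{\ell_k}$ satisfies the acceptance test in \cref{step:backtrack}. That test reads $\rho(z^{k+1})\le[1-\mu\alpha_k(1-\eta_k)]\rho(z^k)$, and since $\eta_k\le\bar\eta\le\hat\eta$ it suffices to show $\alpha_k\ge\hat\alpha^*=\min\{\hat\alpha,\gamma\alpha^*\}$.

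There are two cases. If $\ell_k=0$, then $\alpha_k=\bar\alpha_k\ge\hat\alpha$ by \cref{lem:Fdz residual}. If $\ell_k\ge1$, then $\tilde\alpha:=\alpha_k/\gamma$ failed the test, so \eqref{eq:alf bbd below1} holds. To invoke \cref{lem:alf bd} I also need $z^k+\tilde\alpha d^k\in\lev{\rho}{\hat\beta}$ for a fixed $\hat\beta>\beta$. For this I take $\hat\beta:=\rhomax(\beta,\dmax(\beta,\bar\eta))$ (enlarged to exceed $\beta$ if necessary); then \eqref{eq:updated residual bound} with $\zeta=\gamma^{\ell_k-1}$ gives exactly this membership. \Cref{lem:alf bd} then yields $\tilde\alpha>\alpha^*$, hence $\alpha_k>\gamma\alpha^*$.

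I expect the main obstacle to be a subtlety in the Lipschitz step of \cref{lem:alf bd}: the segment $[z^k,z^k+\tilde\alpha d^k]$ must lie in the strip $\rr^m\times[\frac12\tau_{\hat\beta\min},2\tau_{\hat\beta\max}]$ where $\hLD(\hat\beta)$ is valid. The $\tau$-coordinates along the segment stay $\ge\frac12\htaubetamin$ by \eqref{eq:tau plus Del tau}. Every point of the segment has $\rho\le\hat\beta$ by \cref{lem:step bd}, so it lies in $\lev{\rho}{\hat\beta}$, which sits inside that strip by \cref{lem:beta Lipschitz bd}. I would verify this carefully, possibly using the convexity of the strip and the fact that the endpoints are in the level set. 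Together the two cases give $\hat\nu\in(0,1)$ and \eqref{eq:linear rate}.

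Complexity follows directly. We have $\rho(z^k)\le\hat\nu^k\beta_0$, and solving $\hat\nu^k\beta_0\le\eps$ gives the first bound. For the second, \cref{lem:d bd} gives $\|z^{j+1}-z^j\|\le\|d^j\|\le\max\{\lam^{-1},\taubetamax\}(1+\bar\eta)\rho(z^j)$, since $\alpha_j\le1$. Summing the geometric tail shows $\{z^k\}$ is Cauchy, its limit is a zero of $F$ by continuity and therefore equals $z^*$ by \cref{thm:primal_dual_duality}\ref{item:uniqueness}, and $\|z^{k+1}-z^*\|\le\sum_{j\ge k+1}\|z^{j+1}-z^j\|\le K\hat\nu^{k+1}\le K\hat\nu^k$. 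This gives the stated count.

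For the asymptotic rates, $z^k\to z^*$ with $\tau^*\ge\taubetamin>\frac12\htaubetamin$, and $d^k\to0$, so eventually $\bar\alpha_k=1$. Since $DF(z^*)$ is nonsingular and $DF$ is Lipschitz, we have $\|F(z^k+d^k)\|\le\eta_k\rho(z^k)+\frac12\LD\|d^k\|^2=(\eta_k+O(\rho(z^k)))\rho(z^k)$. This falls below $(1-\mu(1-\eta_k))\rho(z^k)$ once $\rho(z^k)$ is small, provided $\mu<1$ is treated with the uniform margin $(1-\mu)(1-\bar\eta)$; hence $\alpha_k=1$ eventually. The superlinear and $(1+p)$ rates then follow from the standard Dembo--Eisenstat--Steihaug argument \cite[Ch.~6]{kelley1995iterative}, using the bounds $\sigma_{\min},\sigma_{\max}$ of \cref{lem:DF inv bd} to make $\|F(z)\|$ and $\|z-z^*\|$ equivalent near $z^*$. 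For the primal iterates, I write
\[
x^*-x^k=(\tau^*-\tau_k)p(y^k)+\tau^*(p(y^*)-p(y^k)).
\]
Using $\|p\|\le1$ and \cref{lem:logexp_properties}\ref{item:logexp_composed}, this gives $\|x^*-x^k\|\le|\tau^*-\tau_k|+\frac12\tau^*\|A\|\|y^*-y^k\|\le(\frac12\tau^*\|A\|+1)\|z^*-z^k\|$.
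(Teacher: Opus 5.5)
Your proposal is correct and follows essentially the same route as the paper. The common steps are: a case split on $\ell_k$ with $\hat\beta=\rhomax(\beta,\dmax(\beta,\bar\eta))$ feeding \cref{lem:alf bd}, geometric summation of the step lengths via \cref{lem:d bd}, eventual unit steps followed by the standard inexact-Newton error estimate, and the same triangle-inequality bound for the primal iterates. The differences are cosmetic: you merge the paper's cases (a) and (b), obtain $z^k\to z^*$ from the Cauchy property rather than from compactness plus a unique cluster point, check unit-step acceptance directly rather than through the blow-up of the lower bound in \cref{lem:alf bd}, and cite the Dembo--Eisenstat--Steihaug estimate instead of writing out the error identity.
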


\begin{proof}
The proof has three parts corresponding to items (1)--(3). Part~(1) establishes the per-iteration contraction~\eqref{eq:linear rate} and derives the iteration complexity by geometric summation together with \cref{lem:d bd}. Part~(2) shows that, once the iterates enter a neighborhood of~$z^*$, the step size stabilizes at $\alf_k=1$, after which the inexact Newton identity and Lipschitz continuity of~$DF$ yield the claimed asymptotic rates. Part~(3) transfers the dual rates to the primal iterates.

Throughout the proof, set $\hat\beta:=\rhomax(\beta,\dmax(\beta,\bar\eta))$; the constants $\hat\alf$ and $\hat\eta$ are defined in~\eqref{eq:balf and hat alf}, and $\alf^*$ in~\eqref{eq:alf bound}.

By Lemma \ref{lem:alg well-defined}, the algorithm is well-defined and
can only terminate if $\rho(z^{k+1})\le\eps$.
By Step 3 and \cref{lem:Fdz residual},
\begin{equation*}
0<\hat\alf\le\balf_k \coloneqq
\begin{cases}
    1 & \text{if $\tau_k+\Del\tau_k\ge\half\htaubetamin$,}\\
    \frac{\tau_k - \half\htaubetamin}{-\Del\tau_k} & \text{otherwise,}
\end{cases}
\qquad\forall\, k\ge 0.
\end{equation*}
By \cref{lem:alg well-defined,lem:Fdz residual},
$\{z^k+\balf_kd^k\}\subset\lev{\rho}{\hat\beta}$ since
$\{z^k\}\subset\lev{\rho}{\beta}$, and
\[
\norm{F(z^k)+\balf_k DF(z^k)d^k}\le\hat\eta\norm{F(z^k)}\quad\forall\, k\ge 0.
\]
Let $\ell_k$ be such that $\alf_k=\balf_k\gam^{\ell_k}$.
We now consider three possible cases for $\alf_k$: 
\[
(a)\ \alf_k=1,\quad (b)\ \alf_k<1\quad \text{and}\quad \ell_k=0, \quad\text{or}\quad
(c)\ \ell_k\ge1.
\]

\noindent
\emph{Case (a).}
If $\alf_k=1$, then $\ell_k=0$ and $\balf_k=1$, so \cref{step:backtrack} and $\eta_k\le\bar\eta\le\hat\eta<1$, together with $\hat\alf\le 1$, give
\[
\rho(z^{k+1})\le (1-\mu(1-\eta_k))\rho(z^k)
\le (1-\mu\hat\alf(1-\hat\eta))\rho(z^k).
\]

\noindent
\emph{Case (b).}
If $\alf_k<1$ and $\ell_k=0$, then
\[
\alf_k=\balf_k=\frac{\tau_k-\half\htaubetamin}{-\Del\tau_k}\ge \hat\alf>0,
\]
so
\[
\rho(z^{k+1})\le (1-\mu\balf_k(1-\eta_k))\rho(z^k)
\le (1-\mu\hat\alf(1-\hat\eta))\rho(z^k).
\]

\noindent
\emph{Case (c).}
If $\ell_k\ge 1$, then \cref{step:backtrack} gives
\[
\rho(z^k)+\mu\gam^{-1}\alf_k(\eta_k-1)\rho(z^k)
<\rho(z^k+\gam^{-1}\alf_k d^k)
\]
with $\alf_k=\gam^{\ell_k}\balf_k$, $\gam^{\ell_k-1}\in[0,1]$ and 
\[
z^k+\gam^{\ell_k-1}\balf_k d^k\in 
\rset{z+\zeta s}{
\begin{matrix}
z=(y,\tau)\in\lev{\rho}{\beta} 
\\
\zeta\in[0,1],\, s=(\Del y,\Del \tau)\in\rr^m\times \rr,
\\ 
\norm{s}\le\dmax(\beta,\bar\eta) \, \text{and}\ \tau+\Del\tau\ge\half\htaubetamin
\end{matrix}
}.
\]
Since $\ell_k\ge 1$ implies $\gam^{\ell_k-1}\in[0,1]$, \cref{lem:step bd} gives
$\rho(z^k+\gam^{\ell_k-1}\balf_k d^k)\le \hat\beta$, i.e.,
$z^k+\gam^{-1}\alf_k d^k\in \lev{\rho}{\hat \beta}$.
Thus, \cref{lem:alf bd} gives
$1>\gam^{-1}\alf_k>\alf^*>0$, so that
\[
\rho(z^{k+1})\le (1-\mu\alf_k(1-\eta_k))\rho(z^k)\le (1-\mu\gam\alf^*(1-\hat\eta))\rho(z^k).
\]
Combining the three cases with
\(
{\hat\alf}^*:=\min\{\hat\alf,\gam\alf^*\}
\) and $\hat\nu:=1-\mu{\hat\alf}^*(1-{\hat\eta})\in(0,1)$ yields
\[
\rho(z^{k+1})\le \hat\nu\,\rho(z^k)\qquad\forall\, k\ge 0,
\]
which proves~\eqref{eq:linear rate}.

\medskip

Part~\ref{item:complexity} Iteration Complexity.
Iterating~\eqref{eq:linear rate} with $z^0\in\lev{\rho}{\beta_0}$ gives
$\rho(z^k)\le \hat\nu^k\beta_0$ for all $k\ge 0$. Thus $\rho(z^k)\le\eps$ when
\[k\ge \log(\beta_0\eps^{-1})/\log(\hat\nu^{-1})=O(\log(\eps^{-1})).\]
Since $\{z^k\}\subset\lev{\rho}{\beta}$ is compact by \cref{lem:level cpt} and $\rho(z^k)\to 0$, the only cluster point of $\{z^k\}$ is the unique zero $z^*$ of~$\rho$, so $z^k\to z^*$.

Next, by \cref{lem:d bd}, we have
\[\begin{aligned}
\norm{z^{k+1}-z^*}&\le\sum_{j=k}^\infty \norm{z^{j+1}-z^{j}} = \sum_{j=k}^\infty\alf_j\norm{d^j} \le \max\{\lam^{-1},\, \taubetamax\}(1+\bar\eta)\sum_{j=k}^\infty \rho(z^j)
\\ &\le
\max\{\lam^{-1},\, \taubetamax\}(1+\bar\eta)\sum_{j=k}^\infty \hat\nu^j\beta_0 = 
\max\{\lam^{-1},\, \taubetamax\}(1+\bar\eta)\beta_0\frac{\hat\nu^k}{1-\hat\nu}.
\end{aligned}\]
Therefore, $\norm{z^{k+1}-z^*}\le\eps$ when
\[
k\ge \log(K\eps^{-1})/\log(\hat\nu^{-1})=O(\log(\eps^{-1})),
\]
where $K:=\max\{\lam^{-1},\, \taubetamax\}(1+\bar\eta)\beta_0(1-\hat\nu)^{-1}$.

Part~\ref{item:asymptotic} Asymptotic rates:
Assume $\eps=0$. If the algorithm terminates finitely then $z^k=z^*$ and the claim holds trivially, so assume $\rho(z^k)>0$ for all~$k$. Part~\ref{item:complexity} gives $z^k\to z^*$. The argument proceeds in three steps: (a)~the step size stabilizes at $\alf_k=1$, (b)~an exact error identity holds, and (c)~the advertised rates follow.

\emph{Step (a): Step-size stabilization.}
Since $z^k\to z^*$ and $\tau^*\in[\taubetamin,\taubetamax]$ by \cref{lem:level cpt}, there exist $\delta>0$ and $k_0\ge 1$ such that
\[
\{z^k\}_{k\ge k_0}\subset\bB(z^*;\delta)\subset \rr^m\times[\half\taubetamin,2\taubetamax].
\]
By \cref{lem:d bd}, $d^k\to 0$, so $k_0$ can be chosen so that $\{z^k+d^k\}_{k\ge k_0}\subset\bB(z^*;\delta)$ as well, whence $\balf_k=1$ for all $k\ge k_0$.
Let $\cL(\beta)$ and $\mathM(2\taubetamax)$ be as in \cref{lem:DF inv bd}\ref{item:level-set-bounds}, and let $\LD(\hat\beta)$ be as in~\eqref{eq:beta Lipschitz constant}. Since $F(z^k)\to 0$, the lower bound in~\eqref{eq:alf bound sharp} satisfies
\[
\frac{(1-\mu)(1-\bar\eta)}{2\LD(\hat\beta)\,\mathM(2\taubetamax)^2\norm{F(z^k)}}\to +\infty,
\]
so \eqref{eq:alf bbd below1}--\eqref{eq:alf bound sharp} give $\rho(z^k+d^k)\le\rho(z^k)+\mu(\eta_k-1)\rho(z^k)$ for all $k$ sufficiently large. Hence $\alf_k=1$ for all $k\ge k_0$.

\emph{Step (b): Error identity.}
For $k\ge k_0$, using $F(z^*)=0$,
\[\begin{aligned}
z^{k+1}-z^* &= z^k-z^*+d^k
\\ &= DF(z^k)^{-1}\bigl[\,F(z^*)-(F(z^k)+DF(z^k)(z^*-z^k))\bigr]
\\ &\quad +DF(z^k)^{-1}\bigl(F(z^k)+DF(z^k)d^k\bigr).
\end{aligned}\]

\emph{Step (c): Rate bound.}
Taking norms,
\[\begin{aligned}
\norm{z^{k+1}-z^*}
&\le \mathM(2\taubetamax)\left[\half\LD(\hat\beta)\norm{z^k-z^*}^2 + \eta_k\norm{F(z^k)-F(z^*)}\right]
\\ &\le \mathM(2\taubetamax)\left[\half\LD(\hat\beta)\norm{z^k-z^*}^2 + \eta_k\cL(\beta)\norm{z^k-z^*}\right].
\end{aligned}\]
The first inequality uses the quadratic bound lemma~\cite[Theorem~3.2.12]{ortega1970itersols} with $DF$ being $\LD(\hat\beta)$-Lipschitz on $\rr^m\times[\half\tau_{\hat\beta,\min},2\tau_{\hat\beta,\max}]$ by \cref{lem:beta Lipschitz bd} together with the inexact Newton condition in Step~\ref{step:newton} of \cref{alg:inexact Newton}. The second inequality uses the $\cL(\beta)$-Lipschitzness of~$F$ on $\rr^m\times[\half\taubetamin,2\taubetamax]$ from \cref{lem:DF inv bd}, which applies because the line segment from~$z^k$ to~$z^*$ lies in the $\beta$-strip by Step~(a).
If $\eta_k\to 0$, then the ratio $\norm{z^{k+1}-z^*}/\norm{z^k-z^*}\to 0$, proving~\ref{item:super}; if $\eta_k=O(\norm{F(z^k)}^p)$ and $\norm{F(z^k)}=O(\norm{z^k-z^*})$ (from Lipschitzness), then $\norm{z^{k+1}-z^*}=O(\norm{z^k-z^*}^{1+p})$, proving~\ref{item:super-p}.

Part~\ref{item:primal convergence} Primal iterates.
The triangle inequality applied to $x^*-x^k = \tau^*\bigl(p(y^*)-p(y^k)\bigr) + (\tau^*-\tau^k)p(y^k)$, together with \cref{lem:logexp_properties}\cref{item:logexp_composed} and $\norm{p(y^k)}\le 1$, yields
\[
  \norm{x^*-x^k}
  \le \half\tau^*\norm{A}\norm{y^*-y^k} + |\tau^*-\tau^k|
  \le \bigl(\half\tau^*\norm{A}+1\bigr)\norm{z^*-z^k}.
\]
The primal iterates thus inherit the rates of Parts~\ref{item:complexity}--\ref{item:asymptotic} with constant $\half\tau^*\norm{A}+1$.
\end{proof}

\section{Perturbation analysis}\label{sec:perturbation}

With the prior parameterized by $r:=\vlog q$, the solution map $(b,\lam,r)\mapsto (y^\ast,\tau^\ast,x^\ast)(b,\lam,r)$ is jointly $\cC^\infty$ on $\rr^m\times\rr_{++}\times\rr^n$ (\cref{lem:joint_perturbation_derivatives}), locally Lipschitz with computable constants (\cref{thm:joint_perturbation}), and continuous as $\lam\downarrow 0$ (\cref{thm:lam convergence}).

\paragraph{Notation} Write $z^\ast=(y^\ast,\tau^\ast)$, $p^\ast:=p(y^\ast)$, $S^\ast:=S(y^\ast)$, and $\tH^\ast:=\tH(z^\ast)=\lam I+\tau^\ast A\Diag{p^\ast}A\T$ (cf.~\cref{lem:DF inv bd}\cref{item:DF inverse}); each of $y^\ast$, $\tau^\ast$, $p^\ast$, $S^\ast$, $\tH^\ast$ is treated throughout this section as a function of $(b,\lam,r)$. Let $D_\theta$ denote the Jacobian with respect to a parameter $\theta\in\set{b,\lam,r}$ and $D_{(y,\tau)}$ the Jacobian with respect to $(y,\tau)$.

\begin{lemma}[Jacobians of the solution map]\label{lem:joint_perturbation_derivatives}
The solution map $(b,\lambda,r)\mapsto x^\ast$ is in $\mathcal{C}^\infty(\rr^m\times\rr_{++}\times\rr^n)$ with partial Jacobians:
\begin{equation}\label{eq:Dbxstar}
D_bx^\ast=\tau^\ast\Diag{p^\ast}A^\top (\tH^\ast)^{-1},
\qquad
D_\lambda x^\ast=-(D_bx^\ast)y^\ast,
\end{equation}
and
\begin{equation}\label{eq:Drxstar}
D_rx^\ast
=
\tau^\ast\Diag{p^\ast}
-(\tau^\ast)^2\Diag{p^\ast}A^\top (\tH^\ast)^{-1}A\Diag{p^\ast}.
\end{equation}
\end{lemma}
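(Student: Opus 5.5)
The plan is to apply the implicit function theorem to $G(y,\tau;b,\lam,r):=F(y,\tau)=0$, where $F$ is the map of \eqref{eq:dual_grad} with $q=\vexp(r)$. Then $x^\ast=\tau^\ast p(y^\ast)$ can be differentiated by the chain rule. In terms of $r$ we have $\logexp_q(u)=\log\ip{\ones,\vexp(u+r)}$ and $p(y)=\dlogexp(A\T y-c)=\nabla\logexp(A\T y-c+r)$, so $G$ is $\cC^\infty$ jointly in $(y,\tau,b,\lam,r)$ on $\rr^m\times\rr_{++}\times\rr^m\times\rr_{++}\times\rr^n$. By \cref{thm:primal_dual_duality}\ref{item:DF-invertible}, $D_{(y,\tau)}G=DF$ is nonsingular everywhere, and the zero is unique by \cref{thm:primal_dual_duality}\ref{item:uniqueness}. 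The implicit function theorem therefore makes $(b,\lam,r)\mapsto(y^\ast,\tau^\ast)$ locally $\cC^\infty$, and uniqueness patches the local branches into a global $\cC^\infty$ map. Composing with the smooth map $(y,\tau,r)\mapsto\tau\,\nabla\logexp(A\T y-c+r)$ shows that $x^\ast$ is $\cC^\infty$.

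For the Jacobians, the cleanest route is to avoid inverting the full bordered matrix: differentiate the primal-dual characterization \eqref{eq:Primal-Dual-Opt} directly. From $x^\ast=\vexp(r+A\T y^\ast-\ones-c)$ and $Ax^\ast+\lam y^\ast=b$ we get
\[
\diff x^\ast=\Diag{x^\ast}\bigl(\diff r+A\T\diff y^\ast\bigr),\qquad
A\,\diff x^\ast+\lam\,\diff y^\ast+y^\ast\diff\lam=\diff b .
\]
Substituting the first relation into the second and using $\Diag{x^\ast}=\tau^\ast\Diag{p^\ast}$ gives
\[
\tH^\ast\,\diff y^\ast=\diff b-y^\ast\diff\lam-\tau^\ast A\Diag{p^\ast}\diff r .
\]
Here $\tH^\ast=\lam I+\tau^\ast A\Diag{p^\ast}A\T\succ0$ is invertible.

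Reading off the coefficients gives each partial Jacobian. The $b$-part is $D_bx^\ast=\tau^\ast\Diag{p^\ast}A\T(\tH^\ast)^{-1}$. Since $\diff b$ and $-y^\ast\diff\lam$ enter identically, $D_\lam x^\ast=-(D_bx^\ast)y^\ast$. The $r$-part is $\tau^\ast\Diag{p^\ast}-(\tau^\ast)^2\Diag{p^\ast}A\T(\tH^\ast)^{-1}A\Diag{p^\ast}$, which is \eqref{eq:Drxstar}.

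As a consistency check, the same formulas should follow from $-DF^{-1}D_\theta G$ using the factorization in \cref{lem:DF inv bd}\ref{item:DF inverse}. One also notes that $\tau^\ast$ is simply $\ip{\ones,x^\ast}$ and does not need separate treatment.

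The main obstacle is justifying that the equivalence between $F(y,\tau)=0$ and \eqref{eq:Primal-Dual-Opt} holds uniformly in the parameters. This is what licenses differentiating the simpler system, and it follows from the proof of \cref{thm:primal_dual_duality}\ref{item:uniqueness}. A second point is that $\tau^\ast>0$ stays in the open domain, so the implicit function theorem applies throughout the parameter set.
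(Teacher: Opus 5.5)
Your proposal is correct. The smoothness argument is the paper's: the implicit function theorem applied to $F$ with $q=\vexp(r)$, using the nonsingularity from \cref{thm:primal_dual_duality}\ref{item:DF-invertible}. Your remark that uniqueness of the zero identifies each local branch with the global solution map fills in a step the paper leaves implicit. For the Jacobians, however, you take a different route. The paper solves the bordered $(m+1)\times(m+1)$ system $D_{(y,\tau)}F\,[D_\theta y^\ast;\,D_\theta\tau^\ast]=-D_\theta F$ with the explicit inverse of \cref{lem:DF inv bd}\ref{item:DF inverse}, which gives $D_\theta y^\ast$ and $D_\theta\tau^\ast$ separately. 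It then assembles $D_\theta x^\ast$ from $x^\ast=\tau^\ast p^\ast$ by the product rule, collapsing terms with $S^\ast+p^\ast{p^\ast}\T=\Diag{p^\ast}$. You instead differentiate the classical system \eqref{eq:Primal-Dual-Opt}, in which $\tau$ never appears. One substitution then exposes $\tH^\ast=\lam I+A\Diag{x^\ast}A\T=-\nabla^2\psi_d(y^\ast)$ as the coefficient matrix, and all three formulas follow at once. Your route is shorter and explains why $\tH^\ast$ appears; in the paper it enters, up to sign, as the Schur complement of the scalar $1/\tau$ block in $DF$. The paper's route stays in the scale-shape variables, reuses the inverse already derived for the algorithm, and records $D_\theta y^\ast$ and $D_\theta\tau^\ast$ explicitly. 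You recover these too: your elimination yields $D_\theta y^\ast$ directly, and $D_\theta\tau^\ast=\ones\T D_\theta x^\ast$. Finally, the ``main obstacle'' you flag is not really one. By \cref{thm:original_primal_dual}, \eqref{eq:Primal-Dual-Opt} holds at every parameter value, with the same $y^\ast$ as in \cref{thm:primal_dual_duality}\ref{item:solutions}. Your implicit-function step already makes $x^\ast$ and $y^\ast$ differentiable, so these identities can be differentiated directly with no uniformity argument.
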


\begin{proof}
Let $y^\ast := y^\ast(b,\lambda,r)$, $\tau^\ast := \tau^\ast(b,\lambda,r)$, and $x^\ast := x^\ast(b,\lambda,r)$ for brevity, where $r = \vlog q$.
Consider the map $(y,\tau,b,\lambda,r)\mapsto F(y,\tau,b,\lambda,r)$ where $F$ is as in~\eqref{eq:dual_grad}, now incorporating the additional inputs $(b,\lambda,r)$ through $q=\exp(r)$:
\begin{equation}\label{eq:F-more-params}
 F(y,\tau,b,\lambda,r) =  \begin{bmatrix}
     b - \lambda y - \tau A p(y)  \\[3pt]
    -\log \ip{\ones, \vexp( A\T y - c + r)} + \log \tau + 1
   \end{bmatrix}.
\end{equation}
By \cref{lem:logexp_properties}\cref{item:logexp_smooth} and the definition \eqref{eq:F-more-params},
\[
F\in \mathcal{C}^\infty(\rr^m\times\rr_{++}\times\rr^m\times\rr_{++}\times\rr^n),
\]
and by \cref{thm:primal_dual_duality}\cref{item:DF-invertible}, for all $(b,\lambda,r)\in\rr^m\times\rr_{++}\times\rr^n$,
$D_{(y,\tau)}F(y,\tau,b,\lambda,r)$ is non-singular for every $(y,\tau)\in\rr^m\times\rr_{++}$.
Also, $F(y^\ast(b,\lambda,r),\tau^\ast(b,\lambda,r),b,\lambda,r)=0$ by
\cref{thm:primal_dual_duality}\cref{item:uniqueness}. Hence, by the Implicit Function Theorem
\cite[Theorem~1B.1]{DontRock}, the map $(b,\lambda,r)\mapsto (y^\ast(b,\lambda,r),\tau^\ast(b,\lambda,r))$ is $\mathcal{C}^\infty$, and thus so is $(b,\lambda,r)\mapsto x^\ast(b,\lambda,r)$ on its domain since $x^\ast=\tau^\ast p(y^\ast)$. Moreover, \cite[Theorem~1B.1]{DontRock} provides
\begin{equation}\label{eq:IFT_linear_system}
D_{(y,\tau)}F(y^\ast,\tau^\ast,b,\lambda,r)
\begin{bmatrix}
D_b y^\ast & D_\lambda y^\ast & D_r y^\ast \\
D_b \tau^\ast & D_\lambda \tau^\ast & D_r \tau^\ast
\end{bmatrix}
=
- D_{(b,\lambda,r)}F(y^\ast,\tau^\ast,b,\lambda,r),
\end{equation}
where $D_b y^\ast\in\rr^{m\times m}$, $D_\lambda y^\ast\in\rr^{m\times 1}$, $D_r y^\ast\in\rr^{m\times n}$,
$D_b\tau^\ast\in\rr^{1\times m}$, $D_\lambda\tau^\ast\in\rr$, and $D_r\tau^\ast\in\rr^{1\times n}$.
From \eqref{eq:dual_grad} and \eqref{eq:dual_hess}, we obtain
\[
D_{(b,\lambda,r)}F(y^\ast,\tau^\ast,b,\lambda,r)
=
\begin{bmatrix}
I & -y^\ast & -\tau^\ast A S(y^\ast) \\
0 & 0 & -p(y^\ast)^\top
\end{bmatrix}
\]
and
\[
D_{(y,\tau)}F(y^\ast,\tau^\ast,b,\lambda,r)
=
\begin{bmatrix}
-\lambda I-\tau^\ast A S(y^\ast)A^\top & -Ap(y^\ast) \\
-(Ap(y^\ast))^\top & 1/\tau^\ast
\end{bmatrix}.
\]
Solving the linear system \eqref{eq:IFT_linear_system} via \Cref{lem:DF inv bd} yields 
\[\begin{aligned}
D_b y^\ast &= (\tH^*)^{-1}&, \quad D_b \tau^\ast 
&= \tau^\ast (Ap^\ast)^\top (\tH^*)^{-1}, \\
D_\lambda y^\ast &= -(\tH^*)^{-1}y^\ast&, \quad 
D_\lambda \tau^\ast &= -\tau^\ast (Ap^\ast)^\top (\tH^*)^{-1}y^\ast,
\end{aligned}\]
and
\[
D_r y^\ast \!=\! -\tau^\ast (\tH^*)^{-1}\!A\Diag{p^\ast},
\quad
D_r \tau^\ast \!=\! \tau^\ast {p^\ast}^\top \!\!- \!(\tau^\ast)^2(Ap^\ast)^\top \!(\tH^*)^{-1}\!A\Diag{p^\ast}.
\]

Applying the product and chain rule to $x^\ast=\tau^\ast p^\ast$, we obtain
\[\begin{aligned}
D_bx^\ast
&=
p^\ast D_b\tau^\ast+\tau^\ast (D_yp^\ast)(D_by^\ast)
=
p^\ast \tau^\ast (Ap^\ast)^\top (\tH^*)^{-1}+\tau^\ast S^\ast A^\top (\tH^*)^{-1}
\\ &=
\tau^\ast \Diag{p^\ast}A^\top (\tH^*)^{-1},
\end{aligned}\]
and
\[\begin{aligned}
D_\lambda x^\ast
&\!=\!
p^\ast D_\lambda\tau^\ast\!+\!\tau^\ast (D_yp^\ast)(D_\lambda y^\ast)
\!=\!
-p^\ast \tau^\ast (Ap^\ast)^\top \!(\tH^*)^{-1}y^\ast\!-\!\tau^\ast S^\ast A^\top \!(\tH^*)^{-1}y^\ast
\\ &\!=\!
-(D_bx^\ast)y^\ast.
\end{aligned}\]
which establishes \eqref{eq:Dbxstar}. Moreover, noting that $D_r p^\ast = S^\ast\bigl(A^\top D_r y^\ast + I\bigr)$, we have
\[
\begin{aligned}
&D_r x^\ast
= p^\ast D_r\tau^\ast+\tau^\ast D_r p^\ast \\
&=
p^\ast\!\left(\!\tau^\ast {p^\ast}^\top\!\!\!-\!(\tau^\ast)^2(Ap^\ast)^\top \!(\tH^*)^{-1}\!\!A\Diag{p^\ast}\!\right)
\!+\!\tau^\ast S^\ast\!\!\left(\!-\tau^\ast \!A^\top (\tH^*)^{-1}\!\!A\Diag{p^\ast}\!+\!I\!\right) \\
&=
\tau^\ast \bigl(p^\ast {p^\ast}^\top+S^\ast\bigr)
-(\tau^\ast)^2\bigl(p^\ast {p^\ast}^\top+S^\ast\bigr)A^\top (\tH^*)^{-1}A\Diag{p^\ast} \\
&=
\tau^\ast\Diag{p^\ast}
-(\tau^\ast)^2\Diag{p^\ast}A^\top (\tH^*)^{-1}A\Diag{p^\ast},
\end{aligned}
\]
which yields \eqref{eq:Drxstar}. 
\end{proof}

The partial Jacobians of \cref{lem:joint_perturbation_derivatives} integrate along a linear path in parameter space to give the joint Lipschitz bound below.

\begin{theorem}[Joint perturbation sensitivity]\label{thm:joint_perturbation}
For $(b_i,\lambda_i,r_i)\in\rr^m\times\rr_{++}\times\rr^n$, $i=1,2$, define the linear paths $b(t):=(1-t)b_1+tb_2$, $\lambda(t):=(1-t)\lambda_1+t\lambda_2$, and $r(t):=(1-t)r_1+tr_2$, and let $\lambda_-:=\min\{\lambda_1,\lambda_2\}$ and
\[
\bar\tau:=\max_{t\in[0,1]}\tau^\ast(b(t),\lambda(t),r(t)), \quad
\bar y :=\max_{t\in[0,1]}\|y^\ast(b(t),\lambda(t),r(t))\|.
\]
Then with $x^\ast_i := x^\ast(b_i,\lambda_i,r_i)$, $i=1,2$, we have
\begin{equation}\label{eq:joint_q_perturbation}
\|x^\ast_2-x^\ast_1\|
\!\le\!
\min\!\left\{\!\frac{\sqrt{\bar\tau}}{2\sqrt{\lambda_-}},\frac{\bar\tau\|A\|}{\lambda_-}\!\right\}\!\!
\Bigl(\!\|b_2-b_1\|+\bar y|\lambda_2-\lambda_1|\!\Bigr)  + \bar\tau \|r_2- r_1\|.
\end{equation}
\end{theorem}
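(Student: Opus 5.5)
We integrate the partial Jacobians of \cref{lem:joint_perturbation_derivatives} along the linear path. Set $\theta(t):=(b(t),\lambda(t),r(t))$. Since $x^\ast$ is $\mathcal{C}^\infty$ and the path stays in $\rr^m\times\rr_{++}\times\rr^n$ (because $\lambda(t)\ge\lambda_->0$), the fundamental theorem of calculus gives
\[
x^\ast_2-x^\ast_1=\int_0^1\Bigl[D_bx^\ast\,(b_2-b_1)+D_\lambda x^\ast\,(\lambda_2-\lambda_1)+D_rx^\ast\,(r_2-r_1)\Bigr]\diff t,
\]
with all Jacobians evaluated at $\theta(t)$. By \eqref{eq:Dbxstar}, $D_\lambda x^\ast=-(D_bx^\ast)y^\ast$. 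The first two terms are therefore bounded by $\norm{D_bx^\ast}\bigl(\norm{b_2-b_1}+\bar y|\lambda_2-\lambda_1|\bigr)$, and the third by $\norm{D_rx^\ast}\,\norm{r_2-r_1}$. It remains to bound these two operator norms uniformly in $t$, using $\tau^\ast\le\bar\tau$ and $\lambda\ge\lambda_-$.

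\emph{Bound on $D_bx^\ast$.} Write $P:=\Diag{p^\ast}$, $\tH=\lambda I+\tau^\ast APA\T\succeq\lambda I$. The crude bound uses $\norm{P}\le1$ and $\norm{\tH^{-1}}\le\lambda^{-1}$, which gives $\norm{D_bx^\ast}\le\tau^\ast\norm{A}/\lambda\le\bar\tau\norm{A}/\lambda_-$. For the sharper bound, let $G:=\sqrt{\tau^\ast}P^{1/2}A\T$, so that $\tH=\lambda I+G\T G$ and $D_bx^\ast=\sqrt{\tau^\ast}P^{1/2}\,G\,\tH^{-1}$. Then $\norm{D_bx^\ast}\le\sqrt{\tau^\ast}\,\norm{G(\lambda I+G\T G)^{-1}}$. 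Using the SVD of $G$, this norm equals $\max_\sigma \sigma/(\lambda+\sigma^2)\le 1/(2\sqrt\lambda)$ by AM--GM. Hence $\norm{D_bx^\ast}\le\sqrt{\bar\tau}/(2\sqrt{\lambda_-})$. Taking the minimum of the two bounds yields the first term of \eqref{eq:joint_q_perturbation}.

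\emph{Bound on $D_rx^\ast$.} From \eqref{eq:Drxstar}, $D_rx^\ast=\tau^\ast P^{1/2}\bigl(I-G\tH^{-1}G\T\bigr)P^{1/2}$. The push-through identity gives $I-G(\lambda I+G\T G)^{-1}G\T=\lambda(\lambda I+GG\T)^{-1}$, whose eigenvalues lie in $(0,1]$. Together with $\norm{P^{1/2}}\le1$, this yields $0\preceq D_rx^\ast\preceq\tau^\ast P$ and hence $\norm{D_rx^\ast}\le\tau^\ast\le\bar\tau$.

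The main obstacle is the $\sqrt{\bar\tau/\lambda_-}/2$ bound. It requires the factorization through $G$ so that the $\tau^\ast$ inside $\tH$ is used to cancel growth, rather than only the naive $\tH\succeq\lambda I$. The projection-type identity for $D_rx^\ast$ is the same idea. One should also note that $\bar\tau$ and $\bar y$ are finite because $\tau^\ast$ and $y^\ast$ are continuous on the compact path.
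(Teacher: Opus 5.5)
Your proposal is correct and follows essentially the same route as the paper. You integrate the partial Jacobians of \cref{lem:joint_perturbation_derivatives} along the linear path, bound $\norm{D_bx^\ast}$ both crudely by $\tau^\ast\norm{A}/\lambda$ and via the SVD of your factor $G$ (the transpose of the paper's $B=\sqrt{\tau^\ast}A\Diag{p^\ast}^{1/2}$), and bound $\norm{D_rx^\ast}\le\tau^\ast$ through $0\preceq I-G\tH^{-1}G\T\preceq I$, where your push-through identity $I-G\tH^{-1}G\T=\lambda(\lambda I+GG\T)^{-1}$ simply makes explicit the property of $C$ that the paper asserts without proof.
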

\begin{proof}
Since $(b,\lam,r)\mapsto (y^\ast,\tau^\ast)$ is $\cC^\infty$ on $\rr^m\times\rr_{++}\times\rr^n$ by \cref{lem:joint_perturbation_derivatives} and the path $t\mapsto(b(t),\lambda(t),r(t))$ is affine on the compact interval $[0,1]$ with $\lambda(t)\ge\lambda_->0$, the maps $t\mapsto\tau^\ast(t)$ and $t\mapsto\|y^\ast(t)\|$ are continuous on $[0,1]$; hence $\bar\tau$ and $\bar y$ are attained, and finite.

Applying the mean-value inequality along the %
path $t\mapsto \!(b(t),\lambda(t),r(t))$ and setting $x^\ast(t) = x^\ast(b(t),\lambda(t),r(t))$ for brevity, gives
\begin{equation}\label{eq:sup_perturb_bounds}
\begin{aligned}
\|x^\ast_2-x^\ast_1\|
\le&
\sup_{t\in[0,1]}\!\|D_bx^\ast(t)\|\|b_2-b_1\|\! +\!
\sup_{t\in[0,1]}\!\|D_\lambda x^\ast(t)\||\lambda_2-\lambda_1| \\
&+ \sup_{t\in[0,1]}\|D_r x^\ast(t)\|\|r_2-r_1\|.
\end{aligned}
\end{equation}

Next, we bound the partial Jacobians. Define $B:=\sqrt{\tau^\ast}\,A\Diag{p^\ast}^{1/2}$ so that $\tH^*=\lambda I+BB^\top$, and by \cref{lem:joint_perturbation_derivatives}, $D_b x^\ast = \sqrt{\tau^\ast}\,\Diag{p^\ast}^{1/2}B^\top(\lambda I+BB^\top)^{-1}$. Writing the SVD decomposition $B = U \Sigma V\T$, we have
\[
\|B\T(\lambda I+BB\T)^{-1}\| = \|V \Sigma\T(\lambda I+\Sigma \Sigma\T)^{-1} U\T \| \le \max_{t\ge 0} \frac{t}{\lambda+t^2}
= \frac{1}{2\sqrt{\lambda}},
\]
Since $p^\ast\in\Delta_n$, we have $\|\Diag{p^\ast}^{1/2}\|\le 1$, and so $\|D_b x^\ast\| \leq \sqrt{\tau^\ast}/(2 \sqrt{\lambda})$. From \eqref{eq:Dbxstar}, we also have the bound
\[
\|\tau^\ast \Diag{p^\ast}A^\top (\tH^*)^{-1}\| \le \tau^\ast \|\Diag{p^\ast}\| \|A\| \|(\tH^*)^{-1}\| \le \tau^\ast \|A\|/\lambda,
\]
and therefore 
\begin{equation}\label{eq:Dbx-bound}
\|D_b x^\ast\| \le \min\left\{\frac{\sqrt{\tau^\ast}}{2\sqrt{\lambda}},\frac{\tau^\ast\|A\|}{\lambda}\right\}.
\end{equation}
Moreover, immediately from \eqref{eq:Dbxstar}, we have 
\begin{equation}\label{eq:Dlambdax-bound}
    \|D_\lambda x^\ast\|\le \|D_bx^\ast\|\,\|y^\ast\|.
\end{equation}
Finally, notice that $D_r x^\ast = \tau^\ast \Diag{p^\ast}^{1/2}C\Diag{p^\ast}^{1/2}$ where $C := I-B^\top(\lambda I+BB^\top)^{-1}B$ is positive definite with $C \preceq I$. Hence,
\begin{equation}\label{eq:Drx-bound}
\|D_r x^\ast\|\le \tau^\ast \|\Diag{p^\ast}^{1/2}\|^2 \le \tau^\ast.
\end{equation}
Since $\lambda(t)\ge \lambda_-$, combining \eqref{eq:sup_perturb_bounds} with the partial Jacobian bounds \eqref{eq:Dbx-bound}, \eqref{eq:Dlambdax-bound}, and \eqref{eq:Drx-bound} yields the desired bound \eqref{eq:joint_q_perturbation}.
\end{proof}

One may interpret the bound \eqref{eq:joint_q_perturbation} in two regimes: for small~$\lam$, the minimum is attained by $\sqrt{\btau}/(2\sqrt{\lam})$; for large~$\lam$, the bound $\btau\norm{A}/\lam$ is tighter.

\Cref{thm:joint_perturbation} controls $x^\ast(b,\lam,r)$ only for $\lam$ bounded away from zero, where the bound diverges like $\lam^{-1/2}$. At the boundary, the regularization path nevertheless terminates cleanly: with $\tau>0$ and $r\in\rr^n$ held fixed, the iterate converges along $\lam\downarrow 0$ to a unique limit (\cref{thm:lam convergence}).

For this limit analysis, write $P_\lam(x):=f(x)+\lam^{-1}h(x)$, where $f(x):=g_q(x)+\bip{c}{x}$ and $h(x):=\half\norm{Ax-b}^2+\del_{\tau\Deln}(x)$, so that $P_\lam(x)=\phi_p(x,\tau)$. By~\eqref{eq:phi primal}, $P_\lam$ is strictly convex and coercive, so for each $\lam>0$ the problem $\min_x P_\lam(x)$ has a unique minimizer $\hx(\lam)\in\Rn$. Compactness of $\tau\Deln$ implies $S_\tau:=\argmin\rset{\half\norm{Ax-b}^2}{x\in\tau\Deln}\ne\emptyset$, so $\min_{x\in S_\tau}f(x)$ has a unique minimizer $\hx(0)$.

\begin{theorem}[Regularization-path limit]\label{thm:lam convergence}
As $\lam\downarrow 0$,
\begin{enumerate}[(i)]
\item\label{item:f-limit} $f(\hx(\lam))\nearrow f(\hx(0))=\min\rset{f(x)}{x\in S_\tau}$;
\item\label{item:h-limit} $h(\hx(\lam))\searrow \bh:=\min\rset{\half\norm{Ax-b}^2}{x\in\tau\Deln}$;
\item\label{item:x-limit} $\hx(\lam)\rightarrow \hx(0)$.
\end{enumerate}
\end{theorem}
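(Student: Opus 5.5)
This is the classical exact-penalty / Tikhonov path argument, and I would run it directly on $P_\lam=f+\lam^{-1}h$. Write $x_\lam:=\hx(\lam)$ and $\bar x:=\hx(0)$.

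\emph{Step 1: comparison inequalities.} Fix $0<\lam_1<\lam_2$. Optimality of $x_{\lam_1}$ and $x_{\lam_2}$ gives
\[
f(x_{\lam_1})+\lam_1^{-1}h(x_{\lam_1})\le f(x_{\lam_2})+\lam_1^{-1}h(x_{\lam_2}),
\qquad
f(x_{\lam_2})+\lam_2^{-1}h(x_{\lam_2})\le f(x_{\lam_1})+\lam_2^{-1}h(x_{\lam_1}).
\]
Adding the two yields $(\lam_1^{-1}-\lam_2^{-1})(h(x_{\lam_1})-h(x_{\lam_2}))\le0$. Hence $h(x_\lam)$ is nondecreasing in $\lam$, i.e.\ nonincreasing as $\lam\downarrow0$. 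Substituting back into the second inequality gives $f(x_{\lam_2})\le f(x_{\lam_1})$, so $f(x_\lam)$ is nondecreasing as $\lam\downarrow0$.

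\emph{Step 2: comparison with $\bar x$ and the bounds.} Since $\bar x\in S_\tau$, we have $h(\bar x)=\bh\le h(x_\lam)$. Testing $P_\lam$ at $\bar x$ gives
\[
f(x_\lam)+\lam^{-1}h(x_\lam)\le f(\bar x)+\lam^{-1}\bh .
\]
Two bounds follow. First, $f(x_\lam)\le f(\bar x)$. Second, $0\le h(x_\lam)-\bh\le\lam\,(f(\bar x)-f(x_\lam))$. The quantity $f$ is bounded below on the compact set $\tau\Deln$ by lower semicontinuity (or by continuity of $g_q$ there), so the right-hand side tends to $0$. This proves \ref{item:h-limit}, together with the monotonicity from Step 1.

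\emph{Step 3: cluster points.} All iterates lie in the compact set $\tau\Deln$, because $h=+\infty$ outside it. Take any cluster point $\tilde x$ of $x_{\lam_k}$ along a sequence $\lam_k\downarrow0$. Continuity of $\half\norm{Ax-b}^2$ and Step 2 give $h(\tilde x)=\bh$, so $\tilde x\in S_\tau$. Lower semicontinuity of $f$ (in fact $f$ is continuous on $\rr^n_+$) gives
\[
f(\tilde x)\le\liminf f(x_{\lam_k})\le f(\bar x)=\min_{S_\tau}f .
\]
Therefore $\tilde x$ minimizes $f$ over $S_\tau$. By strict convexity of $f$ on the convex set $S_\tau$ (it is convex as the argmin of a convex function over a convex set), we get $\tilde x=\bar x$. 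Since every cluster point equals $\bar x$ and the family lies in a compact set, $x_\lam\to\bar x$, which is \ref{item:x-limit}.

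\emph{Step 4: limit of $f$.} Continuity of $f$ on $\tau\Deln$, where $x\log x$ is continuous at $0$, then gives $f(x_\lam)\to f(\bar x)$. Combined with the monotonicity of Step 1, this is \ref{item:f-limit}.

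\emph{Main obstacle.} The step needing care is the identification of the cluster point in Step 3. It relies on continuity of $f$ up to the boundary of the simplex, since the limit may have zero entries, and on strict convexity of $g_q$ to get uniqueness over $S_\tau$. Both hold for $g_q$ on $\rr^n_+$ with the convention $0\log0=0$, so I expect no real difficulty there.
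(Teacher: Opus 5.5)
Your proof is correct and follows essentially the same route as the paper. You add the two optimality inequalities to get monotonicity of $h(\hx(\lam))$ and $f(\hx(\lam))$, compare against $\hx(0)$ to get $f(\hx(\lam))\le f(\hx(0))$ and $h(\hx(\lam))\to\bh$, and identify the unique cluster point in $S_\tau$ by uniqueness of the minimizer of $f$ there. The differences are cosmetic: you bound $0\le h(\hx(\lam))-\bh\le\lam\,(f(\hx(0))-f(\hx(\lam)))$ directly rather than passing through the monotone limit of $\hP_\lam(\hx(\lam))$, and you use compactness of $\tau\Deln$ instead of a sublevel set of $f$. Your Step~4 also makes explicit the final equality $\lim_{\lam\downarrow 0} f(\hx(\lam))=f(\hx(0))$, which the paper leaves implicit after establishing only the upper bound.
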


\begin{proof}
Observe that $h(\hx(0))=\bh$, in the notation of \cref{thm:lam convergence}\cref{item:h-limit}.
Let $\lam_1>\lam_2>0$, 
and define $\hh(x):=h(x)-\bh\ge 0$ and $\hP_\lam(x):=f(x)+\lam^{-1}(h(x)-\bh)$
for all $\lam>0$, i.e. $\hP_\lam(x)=P_\lam(x)-\lam^{-1}\bh$.
Then, by the definition of $\hx(\lam)$ for $\lam>0$, we have
\begin{equation}\label{eq:x zero bd}
f(\hx(\lam_1))\le\hP_{\lam_1}(\hx(\lam_1))\le \hP_{\lam_1}(\hx(\lam_2))
\le \hP_{\lam_2}(\hx(\lam_2)) 
\le \hP_{\lam_2}(\hx(0))= f(\hx(0)),
\end{equation}
since $\lam_1^{-1}<\lam_2^{-1}$ and $\hh(x)\ge 0$ for all $x\in\Rn$.
Therefore, $\hP_{\lam}(\hx(\lam))$ is a non-increasing function of $\lam>0$
that is bounded above by $f(\hx(0))$ as $\lam\downarrow 0$. Therefore, there is 
a $\widetilde P\le f(\hx(0))$ such that 
\begin{equation}\label{eq:tP ne}
\hP_{\lam}(\hx(\lam))\nearrow \widetilde P\quad \text{as}\quad 
\lam\downarrow 0.
\end{equation}

Next, add the inequalities
\begin{center}
$
\hP_{\lam_1}(\hx(\lam_1))\le \hP_{\lam_1}(\hx(\lam_2))\quad\text{and}\quad
\hP_{\lam_2}(\hx(\lam_2)) \le \hP_{\lam_2}(\hx(\lam_1))
$
\end{center}
to find that
\begin{center}
$
(\frac{1}{\lam_2}-\frac{1}{\lam_1})\hh(\hx(\lam_2))
=\hP_{\lam_2}(\hx(\lam_2))-\hP_{\lam_1}(\hx(\lam_2))
\le
\hP_{\lam_2}(\hx(\lam_1))-\hP_{\lam_1}(\hx(\lam_1))
=
(\frac{1}{\lam_2}-\frac{1}{\lam_1})\hh(\hx(\lam_1)).
$
\end{center}
Therefore, $\hh(\hx(\lam_2))\le \hh(\hx(\lam_1))$. That is $\hh(\hx(\lam))$
is nondecreasing as a function of $\lam>0$, and it is bounded below by $0$
as $\lam\downarrow 0$.
Consequently, there is a $\tilde h\ge 0$ such that 
\begin{equation}\label{eq:th se}
\hh(\hx(\lam))\searrow \tilde h\ge 0\quad \text{as} \quad \lam\downarrow 0.
\end{equation}

Finally, note that since 
\[\begin{aligned}
f(\hx(\lam_1))+\lam_1^{-1}\hh(\hx(\lam_1))&=
\hP_{\lam_1}(\hx(\lam_1))\le \hP_{\lam_1}(\hx(\lam_2))
\\ &=f(\hx(\lam_2))+\lam_1^{-1}\hh(\hx(\lam_2))
\le 
f(\hx(\lam_2))+\lam_1^{-1}\hh(\hx(\lam_1)),
\end{aligned}\]
we have $f(\hx(\lam_1))\le f(\hx(\lam_2))\le \hP_{\lam_2}(\hx(\lam_2))\le f(\hx(0))$
by \eqref{eq:x zero bd}.
Hence $f(\hx(\lam))$ is a bounded non-increasing function of $\lam>0$.  
Consequently,
there is a $\tilde f\le f(\hx(0))$ such that
\begin{equation}\label{eq:f ne}
f(\hx(\lam))\nearrow \tilde f\le f(\hx(0))\quad \text{as}\quad \lam\downarrow 0.
\end{equation}

By \eqref{eq:tP ne}, \eqref{eq:th se} and \eqref{eq:f ne}, we have
\[
0\le \tilde h=\lim_{\lam\downarrow 0}\hh(\hx(\lam))
=\lim_{\lam\downarrow 0}\lam[\hP_\lam(\hx(\lam))-f(\hx(\lam))]=0.
\]
Hence every cluster point $\tilde x$ of $\{\hx(\lam)\}_{\lam>0}$ is necessarily
an element $S_\tau$ since $0=h(\tilde x)-\bh$. Moreover, 
$f(\tilde x)\le f(\hx(0))=\min\rset{f(x)}{x\in S_\tau}$. The uniqueness of $\hx(0)$
implies that $\tilde x=\hx(0)$. Finally, by \eqref{eq:x zero bd}, observe that 
$\{\hx(\lam)\}_{\lam>0}\subset\lev{f}{f(\hx(0))}$ which is compact. Consequently,
$\hx(\lam)\rightarrow \hx(0)$ since $\hx(0)$ is the unique cluster point.
\end{proof}

\section{Numerical experiments}\label{sec:experiments}

\begin{figure}[t]
  \centering
  \includegraphics[width=\linewidth]{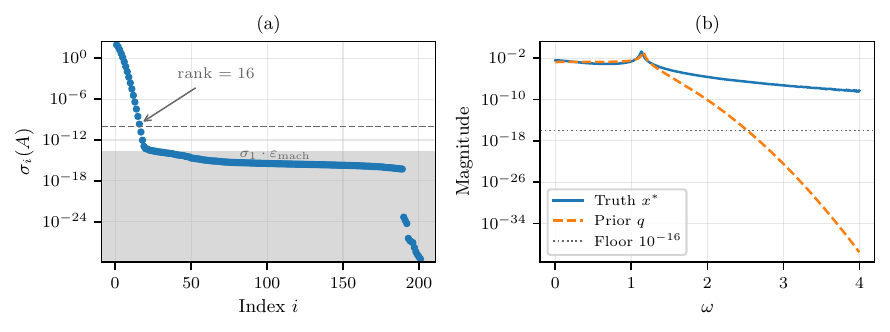}
  \caption{Structure of the UEG test problem. (a)~Singular values of the periodic Laplace kernel $A \in \rr^{201\times 500}$: numerical rank $16$ at tolerance $10^{-10}$; the shaded band marks values below $\sigma_1\,\varepsilon_{\mathrm{mach}}$. (b)~Ground-truth spectral density $x^\ast$ and prior $q$.}
  \label{fig:problem_data}
\end{figure}

Our test problem is a uniform-electron-gas (UEG) analytic-continuation instance from~\citet{chunaDualFormulationMaximum2025}. The task is to recover a nonnegative spectral density $x^\ast$ from noisy imaginary-time observations $b\approx Ax^\ast$ by solving~\eqref{eq:P_problem}. The forward operator is a discretized periodic Laplace transform,
\[
  A_{ij} = e^{-t_i\omega_j} + e^{-(\beta-t_i)\omega_j},
\]
with inverse temperature $\beta\approx 18.68$, $m=201$ imaginary-time samples $t_i\in[0,\beta]$, and $n=500$ frequency nodes $\omega_j\in[4\times 10^{-3},\,4]$. Exponential decay in the kernel entries makes the map $A$ severely ill-conditioned;
see \cref{fig:problem_data}(a).

The ground truth $x^\ast$ is the Mermin function: its shape $p^\ast:=x^\ast/\tau^\ast\in\Delta_n$ carries physical meaning, whereas its normalization $\tau^\ast=\ip{\ones,x^\ast}$ is unknown \emph{a priori}. The prior $q\in\Delta_n$ is the random-phase approximation (RPA), a standard model for the electron-liquid response that specifies spectral shape but not scale; the peak near $\omega\approx 1$ in \cref{fig:problem_data}(b) is the plasmon resonance~\cite[\S 5.3]{giuliani2008quantum}. This is precisely the setting that motivates the scale-shape decomposition of \cref{sec:new_approach}: the physics supplies a shape prior on $\Delta_n$, and the scale $\tau^\ast$ must be determined by the data.

The observations are corrupted by multiplicative noise at relative level $10^{-4}$, and we set $c=0$ and $\lambda=10^{-5}$ throughout. Because \cref{alg:inexact Newton} requires $q\in\rr^n_{++}$ and the prior has an exponentially small tail (half of entries fall below $10^{-10}$, with a minimum of $2.8\times 10^{-40}$), we clip $q\leftarrow\max(q,\,10^{-16})$ and renormalize before the first iteration. Three experiments follow. \Cref{subsec:overflow} sweeps $Z:=\tau^\ast\in\set{2^4,2^6,2^8,2^{10}}$ to probe overflow resilience, \cref{subsec:analytic_continuation} varies $Z\in\set{10^0,10^1,10^2}$ to probe scale recovery from a fixed initialization $\tau_0=1$, and \cref{subsec:regularization_path} sweeps $\lambda$ to trace the regularization path predicted by \cref{thm:joint_perturbation,thm:lam convergence}.

\paragraph{Algorithm settings} Across all experiments we run \cref{alg:inexact Newton} from $y^0=0$ with exact Newton steps ($\eta_k\equiv 0$, solved by a dense symmetric-indefinite factorization), Armijo constant $\mu=0.49$ (near the upper bound $1/2$ of \cref{rem:merit fnc}), and backtracking factor $\gam=\half$. The level-set parameter is $\beta = 1.5\,\rho(z^0)$, and the safeguard $\htaubetamin$ uses the closed-form bound $\lambda W(\zeta)/A_{\max}^2$ from \cref{lem:level cpt}\ref{item:taubetamin}, floored at $10^{-16}$. The merit tolerance and iteration cap are $(\eps,K_{\max})=(10^{-8},300)$ throughout.

\begin{figure}[t]
  \centering
  \includegraphics[width=0.85\linewidth]{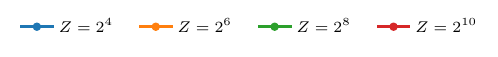}\\[-0.6em]
  \begin{subfigure}[t]{0.325\linewidth}
    \centering
    \includegraphics[width=\linewidth]{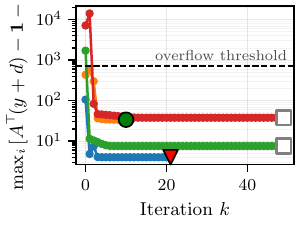}
    \caption{Standard dual Newton}\label{fig:traj:a}
  \end{subfigure}\hfill
  \begin{subfigure}[t]{0.325\linewidth}
    \centering
    \includegraphics[width=\linewidth]{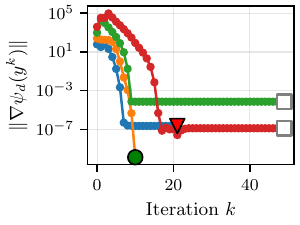}
    \caption{Standard\@ dual Newton}\label{fig:traj:b}
  \end{subfigure}\hfill
  \begin{subfigure}[t]{0.325\linewidth}
    \centering
    \includegraphics[width=\linewidth]{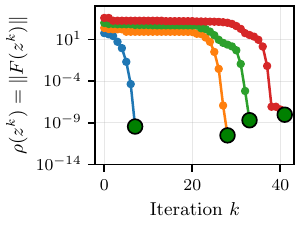}
    \caption{Scale-shape dual Newton}\label{fig:traj:c}
  \end{subfigure}
  \\[-1.0em]
  \includegraphics[width=0.55\linewidth]{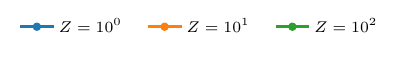}\\[-0.8em]
  \begin{subfigure}[t]{0.325\linewidth}
    \centering
    \includegraphics[width=\linewidth]{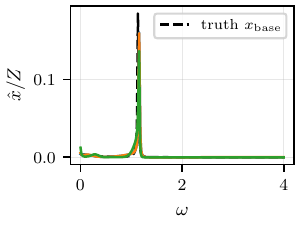}
    \caption{Computed solutions}
  \end{subfigure}\hfill
  \begin{subfigure}[t]{0.325\linewidth}
    \centering
    \includegraphics[width=\linewidth]{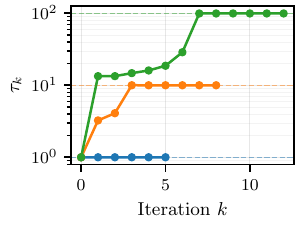}
    \caption{Scale trajectory}\label{fig:traj:e}
  \end{subfigure}\hfill
  \begin{subfigure}[t]{0.325\linewidth}
    \centering
    \includegraphics[width=\linewidth]{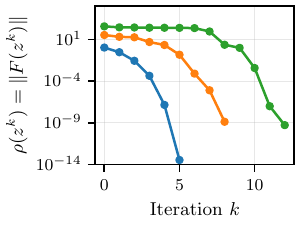}
    \caption{Merit convergence}\label{fig:traj:f}
  \end{subfigure}
  \caption{Iteration trajectories at $\lambda = 10^{-5}$, $\tau_0 = 1$. \emph{Top row, overflow resilience} (\cref{subsec:overflow}), $Z\in\set{2^4,2^6,2^8,2^{10}}$: (a) maximum componentwise argument $\max_j(A\T y^k - \ones - c)_j$ of the exponential in $\nabla\psi_d$ under full (undamped) Newton steps. (b)~classical dual Newton gradient $\norm{\nabla\psi_d(y^k)}$ with Armijo backtracking plus overflow safeguard. (c)~\cref{alg:inexact Newton} merit $\rho(z^k)=\norm{F(z^k)}$. \emph{Bottom row, scale recovery} (\cref{subsec:analytic_continuation}), $Z\in\set{10^0,10^1,10^2}$: (d) converged recoveries $\hat{x}/Z$ overlaid on the truth shape; (e) scale trajectories $\tau_k$ with targets $\tau^\ast = Z$ (dashed); (f) merit $\rho(z^k)$. Endpoint markers: {\large$\bullet$}~converged, {\large$\blacktriangledown$}~Armijo failure, {\large$\square$}~iteration budget exceeded.}
  \label{fig:trajectories}
\end{figure}

\subsection{Overflow resilience}\label{subsec:overflow}

Any first- or second-order method applied to the unnormalized dual~\eqref{eq:original_dual_obj} must evaluate $\vexp(A\T y - c)$, which can overflow when entries of $A\T y - c$ are large. The scale-shape reformulation~\eqref{eq:dual_func} replaces these bare exponentials with $\logexp_q$ and $\dlogexp$, which admit the standard max-shift evaluation~\citep{blanchardAccuratelyComputingLogsumexp2021} applied to the $q$-weighted variants; every exponent in the evaluation of $F$ and $DF$ in \cref{alg:inexact Newton} is then nonpositive.

The benchmark comparator is Newton's method applied directly to the standard dual~\eqref{eq:original_dual_obj} with Armijo backtracking, augmented by an overflow safeguard that halves the step whenever the trial exponent would exceed the \texttt{float64} range. Without this safeguard, a textbook implementation fails outright once the full Newton step crosses the overflow threshold. This safeguarded Newton method therefore measures the performance penalty of avoiding overflow, rather than simply crashing. The same pathology affects any method that requires unnormalized exponentials, such as \citet{decarreau_DualMethodsEntropy_1992}.

Set $x^\ast \!=\! Zp^\ast$ with $p^\ast \in \Delta_n$ and $\tau_0 = 1$. Sweep the target scale $Z \in \set{2^4, 2^6, 2^8, 2^{10}}$. \Cref{fig:trajectories}(a) plots the unsafeguarded full-Newton-step exponent $\max_i[A\T(y^k + \Delta y^k) - \ones - c]_i$ for the comparator on a log scale. The curves separate by $Z$: the exponent stays below $600$ for $Z \in \set{2^4, 2^6}$, below the float64 threshold $\log(\texttt{float64\_max}) \approx 709$ (dashed). However, for $Z \ge 2^8$ the exponent exceeds the overflow threshold by more than an order of magnitude. The safeguarded backtracking halves the step several times at iteration $k = 0$ (where $y_k=0$) before accepting a valid trial point.

Panels~(b)~and~(c) of \cref{fig:trajectories} compare the resulting convergence. Standard dual Newton reaches $\norm{\nabla\psi_d(y^k)} \le 10^{-8}$ only for $Z = 2^6$ ($11$ iterations). At $Z = 2^4$ the Armijo backtracking stalls at $\norm{\nabla\psi_d(y^k)} \approx 10^{-7}$ with no overflow triggered. This failure reflects the precision floor imposed by the ill-conditioning of the dual Hessian $\nabla^2\psi_d$. At $Z \in \set{2^8, 2^{10}}$ the overflow-aware backtracking allows for the first Newton steps, but the gradient norm does not reach the required tolerance of $10^{-8}$ within the iteration limit.

By contrast, \cref{alg:inexact Newton} reaches the merit threshold tolerance $\rho \le 10^{-8}$ well before the iteration cap, across the full $Z$ range. The $Z = 2^{6}$ and larger trajectories exhibit the expected rate transition from linear to quadratic convergence (\cref{thm:convergence of inexact Newton 1}).

\subsection{Scale recovery}\label{subsec:analytic_continuation}

Set $x^\ast = Z p^\ast$ with $p^\ast \in \Delta_n$ and sweep $Z \in \set{10^0,\,10^1,\,10^2}$. This experiment tests scale recovery: \cref{alg:inexact Newton} must find $\tau^\ast = Z$ from the initialization $\tau_0 = 1$.

\Cref{alg:inexact Newton} converges in every case. For $Z = 1$, the terminal scale is $\tau_K = 1.0002$ after $6$ iterations; for $Z = 10^1$, $\tau_K = 10.003$ after $9$ iterations; for $Z = 10^2$, $\tau_K = 99.46$ after $13$ iterations. The relative errors $|\tau_K - Z|/Z$ are below $6\times 10^{-3}$ in all cases, and the final quadratic step drives the merit function $\rho$ well below the tolerance $\eps=10^{-8}$.

\Cref{fig:trajectories}(d) shows that the normalized recoveries $x^\ast/Z$ coincide across~$Z$ and illustrates the scale-shape decomposition. Panel~(e) traces $\tau_k$ as it increases from $\tau_0 = 1$ towards its target. Panel~(f) shows the linear-to-quadratic convergence transition of \cref{thm:convergence of inexact Newton 1}.

\subsection{Regularization path}\label{subsec:regularization_path}

\begin{figure}[t]
  \centering
  \includegraphics[width=0.8\linewidth]{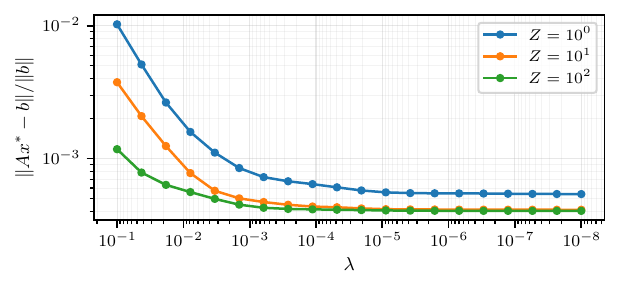}
  \caption{Relative data-fit residual $\norm{Ax^\ast - b}/\norm{b}$ versus $\lambda$ (log--log) for $Z \in \set{10^0,\,10^1,\,10^2}$. Each point is an independent solve from $\tau_0 = 1$.}
  \label{fig:regularization_path}
\end{figure}

\Cref{fig:regularization_path} traces the relative data-fit residual $\norm{Ax^\ast - b}/\norm{b}$ as $\lambda$ sweeps from $10^{-1}$ down to $10^{-8}$ for each scale $Z \in \set{10^0,\,10^1,\,10^2}$. The residual is large for $\lambda\gg 1$, where the entropy term dominates, and decays as the balance shifts toward data fidelity, plateauing at a noise-limited floor near $\lambda\approx 10^{-5}$.

By \cref{lem:joint_perturbation_derivatives}, the solution map $(b,\lam,r)\mapsto x^\ast$ is $\mathcal{C}^\infty$ on $\rr^m\times\rr_{++}\times\rr^n$, so $\lam\mapsto x^\ast(\lam)$ is smooth for $\lam>0$; \cref{thm:lam convergence} governs the $\lam\downarrow 0$ limit, in which $x^\ast(\lam)$ approaches the constrained least-squares solution on $\tau\Deln$ and $\norm{Ax^\ast(\lam)-b}$ decreases monotonically to the constrained-LS residual, given here by the noise floor. The steep initial drop is quantified by \cref{thm:joint_perturbation}: fixing $b$ and $q$,
\[
  \norm{x^\ast(\lam_2)-x^\ast(\lam_1)} \;\le\; \min\Set{\frac{\sqrt{\btau}}{2\sqrt{\lam_-}},\;\frac{\btau\norm{A}}{\lam_-}}\,\bar y\,|\lam_2-\lam_1|,
\]
where $\lam_-:=\min\set{\lam_1,\lam_2}$ and $\btau$, $\bar y$ are the path maxima of \cref{thm:joint_perturbation}. In the experimental window $\btau$ stays bounded as $\lam_-\downarrow 0$ (the iterates satisfy $\tau_K\approx Z$, see \cref{subsec:analytic_continuation}), while $F_y(y^\ast,\tau^\ast)=0$ gives the dual bound $\bar y\le(\norm{b}+A_{\max}\btau)/\lam_-$. Combined with the $\sqrt{\btau}/(2\sqrt{\lam_-})$ prefactor of~\eqref{eq:joint_q_perturbation}, the Lipschitz modulus grows as $O(\lam_-^{-3/2})$ as $\lam_-\downarrow 0$, matching the steep decay in \cref{fig:regularization_path}.

The three residual curves do not collapse under the $\norm{b}$-normalization. The $Z=1$ curve saturates near $5\times 10^{-4}$, while $Z=10$ and $Z=100$ both saturate near $2\times 10^{-4}$. This reflects a scale-dependent balance in the primal objective~\eqref{eq:P_problem}: the data-fit term $(2\lam)^{-1}\norm{Ax-b}^2$ scales as $\Theta(Z^2)$ and the entropy $g_q(x)$ as $\Theta(Z\log Z)$. Larger $Z$ shifts the balance toward data fidelity, as observed.

\Cref{alg:inexact Newton} reaches $\rho\le 10^{-10}$ in $6$--$30$ iterations for $\lam\ge 10^{-6}$ across the $(\lam,Z)$ grid. \Cref{lem:DF inv bd}\cref{item:singular-bounds} predicts this slowdown: $\sig_{\min}(DF(z))\ge \min\set{\lam,\,1/\tau}$ gives $\norm{DF(z)^{-1}}=O(\lam^{-1})$ for bounded $\tau$, so the linear contraction rate of \cref{thm:convergence of inexact Newton 1} deteriorates as $\lam\downarrow 0$.

\section*{Acknowledgments}
The authors thank Thomas Chuna for supplying the UEG data for the experiments of \cref{sec:experiments}. JB and MPF thank the Centre de recherches math\'ematiques and the Simons Foundation for research support during Summer 2025.
\clearpage
\bibliographystyle{siamplain}
\bibliography{shorttitles,refs}

\end{document}